\DeclareMathOperator{\LL}{L}
\DeclareMathOperator{\Tr}{Tr}
\newcommand*{\DomA}{\mathcal{D}(A)}
\newcommand*{\inlineequation}[2][]{%
  \begingroup
    \refstepcounter{equation}%
    \ifx\\#1\\%
    \else
      \label{#1}%
    \fi

 \relpenalty=100000000%
 \binoppenalty=10000000 %

    \ensuremath{%
      #2%
    }%
    ~\@eqnnum
  \endgroup
}
\newenvironment{alphafootnotes}
  {\par\edef\savedfootnotenumber{\number\value{footnote}}
   
   \setcounter{footnote}{0}}
  {\par\setcounter{footnote}{\savedfootnotenumber}}
\newtheorem{mytheo}{Theorem}[section]
\newtheorem{myprop}{Proposition}[section]
\newtheorem{lemme}{Lemma}[section]
\newtheorem{remark}{Remark}[section]
\date{}
\begin{document}

\title{Discretization of the Ergodic Functional Central Limit Theorem}
\author[1]{Gilles Pag\`es}
\author[2]{Cl\'ement Rey$^{\ast,}$}
\affil[1]{LPSM, Sorbonne Universit\'e, 4 Place Jussieu, 75005 Paris, France} 
\affil[2]{CMAP, \'Ecole Polytechnique, Institut Polytechnique de Paris, Route de Saclay, 91120 Palaiseau, France}
\maketitle

\begin{alphafootnotes}
\footnote{e-mails : gilles.pages@sorbonne-universite.fr, clement.rey@polytechnique.edu $^{\ast}$. }
\end{alphafootnotes}

\abstract{ 

In this paper, we study the discretization of the ergodic Functional Central Limit Theorem (CLT) established by Bhattacharya (see \cite{Bhattacharya_1982}) which states the following: Given a stationary and ergodic Markov process $(X_t)_{t \geqslant 0}$ with unique invariant measure $\nu$ and infinitesimal generator $A$, then, for every smooth enough function $f$, $(n^{1/2} \frac{1}{n}\int_0^{nt} Af(X_s)ds)_{t \geqslant 0}$ converges in distribution towards the distribution of the process $(\sqrt{-2 \langle f, Af \rangle_{\nu}} W_{t})_{t \geqslant 0}$ with $(W_{t})_{t \geqslant 0}$ a Wiener process. In particular, we consider the marginal distribution at fixed $t=1$, and we show that when $\int_0^{n} Af(X_s)ds$ is replaced by a well chosen discretization of the time integral with order $q$ ($e.g.$ Riemann discretization in the case $q=1$),  then the CLT still holds but with rate $n^{q/(2q+1)}$ instead of $n^{1/2}$.  Moreover, our results remain valid when $(X_t)_{t \geqslant 0}$ is replaced by a $q$-weak order approximation (not necessarily stationary). This paper presents both the discretization method of order $q$ for the time integral and the $q$-order ergodic CLT we derive from them. We finally propose applications concerning the first order CLT for the approximation of Markov Brownian diffusion stationary regimes with Euler scheme (where we recover existing results from the literature) and the second order CLT for the approximation of Brownian diffusion stationary regimes using Talay's scheme \cite{Talay_1990} of weak order two.\\

\noindent {\bf Keywords :} Ergodic theory, Markov processes, Invariant measures, Central Limit Theorem, Stochastic approximation. \\
{\bf AMS MSC 2010:} 60G10, 47A35, 60F05, 60J25, 60J35, 65C20.

Data Availability Statement: Data sharing not applicable to this article as no datasets were generated or analyzed during the current study.
\section{Introduction}

In this paper, we design a recursive algorithm which aims to approximate the invariant distribution (denoted $\nu$) of a Feller process $(X_t)_{t \geqslant 0}$.  Moreover, we establish rates of convergence for our approximation. In particular, we prove a discretized version of the Functional Central Limit Theorem (CLT) presented in \cite{Bhattacharya_1982} and which states the following: \\
Let $(X_{t})_{t \geqslant 0}$ be a progressively measurable Markov stationary ergodic process with initial and invariant distribution $\nu$ and infinitesimal generator $A$ with domain $\DomA$ (see Section \ref{sec:Construction of the random measures} for definition). Then, for every $f \in \DomA$,
$(n^{1/2}\frac{1}{n} \int_0^{t n} Af(X_s)ds)_{t \geqslant 0}$ converges in distribution, as $n$ tends to infinity, towards the distribution of $(\sqrt{-2 \langle f, Af \rangle_{\nu}} W_{t})_{t \geqslant 0}$ with $(W_{t})_{t \geqslant 0}$ a Wiener process. \\

In this  work, we are interested in proving a version of this result when considering the marginal asymptotic distribution of $n^{1/2}\frac{1}{n} \int_0^{t n} Af(X_s)ds$ for fixed $t >0$.  The main difference of our approach compared to \cite{Bhattacharya_1982}, is that we consider discrete time approximations of the time integral $\int_0^{t n} Af(X_s)ds$ and make possible to replace $(X_t)_{t \geqslant 0}$ by a weak order approximation (in a sense made precise in Section \ref{Rate of convergence - A general approach}) with arbitrary initial condition. We then establish a CLT (see Theorem \ref{th:conv_gnl_ordre_q}) for our approximation of the time integral. However, the rate of convergence is altered by the discretization. In particular, if we use a $q$-weak order approximation for $(X_t)_{t \geqslant 0}$, $q \in \mathbb{N}^{\ast}$, we propose an adapted discretization of the time integral such that the CLT is satisfied with order $n^{1/2}$ replaced by $n^{q/(2q+1)}$. \\

In order to build our approximation of $\nu$, we consider random weighted empirical measures built using a recursive algorithm introduced in \cite{Pages_Rey_2020} and inspired by \cite{Lamberton_Pages_2002}. Let us be more specific about the motivation of this algorithm. \\

Invariant distributions are crucial in the study of the long term behavior of stochastic differential systems (see \cite{Hasminskii_1980} and \cite{Ethier_Kurtz_1986} for an overview of the subject) and their computation has already been widely explored in the literature. In \cite{Soize_1994}, explicit exact expressions of the invariant density distribution for some solutions of Stochastic Differential Equations are given. \\

However, in many cases, there is no explicit formula for $\nu$ and other approaches must be developed. A first method consists in studying the convergence towards $\nu$ of the semigroup $(P_t)_{t \geqslant 0}$ ($i.e.$ $\mathbb{E}[f(X_{t}]$) of the Markov process $(X_t)_ {t\geqslant 0}$ as $t$ tends to infinity. This is done $e.g.$ in \cite{Ganidis_Roynette_Simonot_1999} for the total variation topology. As soon as $X_{T}$ can be simulated, for $T$ large enough, we can design a Monte Carlo method to estimate $P_{T}$. Remark that, in addition to the convergence error of $P_{T} \underset{T \to \infty }{\to}\nu$, it gives rise to a second term in the error analysis due to the Monte Carlo error for the computation of $P_{T}$.\\

Unfortunately, most of the time, $(X_t)_{t \geqslant 0}$ cannot be simulated at a reasonable cost. A solution is then to replace $(X_t)_{t \geqslant 0}$ by a simulable approximation $(\overline{X}^{\gamma}_{\Gamma_n})_{n\in \mathbb{N}}$, built with transition functions $( \mathscr{Q}_{\gamma_n})_{n \in\mathbb{N}^{\ast}}$ (given a step sequence $(\gamma_n)_{n \in \mathbb{N}}$, $\Gamma_0=0$ and $\Gamma_n=\gamma_1+\ldots+\gamma_n$).  We usually refer to $(\overline{X}^{\gamma}_{\Gamma_n})_{n\in \mathbb{N}}$ as a numerical scheme of $(X_t)_{t \geqslant 0}$.  It is then possible to build approximations of $\nu$ using Monte Carlo methods resulting from the weak approximation properties satisfied by $(\overline{X}^{\gamma}_{\Gamma_n})_{n\in \mathbb{N}}$.
For instance, when $\gamma_{n}=\gamma_{1}$, $n \in \mathbb{N}^{\ast}$,  $T \in \{\Gamma_{n},n \in \mathbb{N} \}$, if $\overline{X}^{\gamma_1}_{T}$ weakly converges to $P_{T}$ as $\gamma_{1}$ tends to zero, we can approximate $\nu(f)$ using the Monte Carlo approximation of $\mathbb{E}[f(\overline{X}^{\gamma_1}_{T}))]$ taking $\gamma_{1}$ small enough and $T$ large enough (at least for continuous and bounded $f$).\\

%
%
%

\noindent The Monte Carlo methods mentioned above do not fully benefit from the ergodic feature of $(X_t)_{t \geqslant 0}$. In fact, as investigated in \cite{Talay_1990} for strongly Brownian diffusions, the ergodic (or positive recurrence) property of $(X_t)_{t \geqslant 0}$ is also satisfied by its approximation $(\overline{X}^{\gamma_{1}}_{\Gamma_n})_{n\in \mathbb{N}}$ at least for small enough $\gamma_{1}$. In particular, $(\overline{X}^{\gamma_1}_{\Gamma_n})_{n\in \mathbb{N}}$ has an invariant distribution $\nu^{\gamma_1}$ (supposed to be unique for simplicity) and the sequence of empirical measures 
 \begin{align}
 \label{eq:def_weight_const_emp_meas_intro}
 \nu^{\gamma_1}_n(dx)=\frac{1}{ \Gamma_n} \sum_{k=1}^n \gamma_1 \delta_{\overline{X}^{\gamma_1}_{\Gamma_{k-1}}}(dx), \qquad \Gamma_n = n \gamma_1
 \end{align}
(which can be seen as a discrete version of the time integral $\Gamma_n^{-1} \int_0^{\Gamma_{n}} \delta_{X_s}ds$ studied in \cite{Bhattacharya_1982} with $X$ replaced by $\overline{X}^{\gamma_1}$) almost surely weakly converges to $\nu^{\gamma_1}$. In other words, for every continuous and bounded function $f$,  $\nu^{\gamma_1}_n(f)$ almost surely converges to $\nu^{\gamma_1}(f)$. This last result makes possible to compute by simulation, arbitrarily accurate approximations of $\nu^{\gamma_1}(f)$ using only one simulated path of $(\overline{X}^{\gamma_1}_{\Gamma_n})_{n\in \mathbb{N}}$. It is an ergodic - or Langevin - simulation of $\nu^{\gamma_1}(f)$. At this point, it remains to establish at least that $\nu^{\gamma_1}(f)$ converges to $\nu(f)$ when $\gamma_1$ converges to zero and, if possible, at which rate. In \cite{Talay_1990} this rate was shown to depend closely on the weak order of the numerical scheme. Notice that the rate of convergence of $(\nu^{\gamma_1}_n)_{n \in \mathbb{N}^{\ast}}$ to $\nu^{\gamma_1}$ is not established in \cite{Talay_1990}.\\

%
To take a step further, the intuition of our algorithm is to build a version (\ref{eq:def_weight_const_emp_meas_intro}) such that we avoid the asymptotic analysis between $\nu^{\gamma_1}$ and $\nu$.  Concerning Monte Carlo approaches for Brownian diffusions, it is proved in \cite{Basak_Hu_Wei_1997}, that the discrete time weak approximation Markov process $(\overline{X}^{\gamma}_{\Gamma_n})_{n\in \mathbb{N}}$, with step sequence $\gamma=(\gamma_n)_{n \in \mathbb{N}}$ vanishing to 0, weakly converges towards $\nu$. It is then possible to approximate $\nu(f)$ using the Monte Carlo approximation of $\mathbb{E}[f(\overline{X}^{\gamma}_{\Gamma_{n}})]$ for $n$ large enough.\\
\noindent In \cite{Lamberton_Pages_2002}, the ideas from \cite{Talay_1990}  and \cite{Basak_Hu_Wei_1997} are combined to design a Langevin Euler Monte Carlo recursive algorithm with decreasing step  which $a.s.$ weakly converges to an invariant distribution. This paper treats the case where $ (\overline{X}^{\gamma}_{\Gamma_n})_{n\in \mathbb{N}}$ is an (inhomogeneous) Euler scheme with decreasing step  associated to a strongly mean reverting Brownian diffusion process taking values in $\mathbb{R}^{d}$. The sequence $(\nu^{\gamma}_n)_{n \in \mathbb{N}^{\ast}}$ is defined as the weighted empirical measures of the path of $ (\overline{X}^{\gamma}_{\Gamma_n})_{n\in \mathbb{N}}$ (which is the procedure that is used in every work we mention from now on and which is also the one we use in this paper). In particular, the $a.s.$ weak convergence of
 \begin{align}
 \label{eq:def_weight_emp_meas_intro}
 \nu^{\gamma}_n(dx)=\frac{1}{\Gamma_n} \sum_{k=1}^n \gamma_k \delta_{\overline{X}^{\gamma}_{\Gamma_{k-1}}}(dx), \qquad \Gamma_n=\sum\limits_{k=1}^n \gamma_k,
 \end{align}
towards the (non-empty) set $\mathcal{V}$ of the invariant distributions of the underlying Brownian diffusion is established. Notice also that, this approach does not require that the invariant measure $\nu$ is unique by contrast with the results obtained in \cite{Talay_1990} and \cite{Basak_Hu_Wei_1997} or in \cite{Durmus_Moulines_2015} where the authors study of the total variation convergence for the Euler scheme with decreasing step  of the over-damped Langevin diffusion. Moreover, when the invariant measure $\nu$ is unique, it is proved in \cite{Lamberton_Pages_2002} that $\lim\limits_{n \to + \infty} \nu^{\gamma}_n f=\nu f \; a.s.$ for a class of test functions $f$ that is not simply restricted to continuous and bounded functions but for a larger class, made of continuous functions with polynomial growth.  More specifically, it is shown that, given $p >0$,  $\lim\limits_{n \to + \infty} \nu^{\gamma}_n f=\nu f \; a.s.$ for every function $f$ satisfying $\vert f (x) \vert \leqslant C(1+\vert x \vert^{p})$ for every $x \in \mathbb{R}^{d}$. This last result implies the $a.s.$ convergence for the $p$-Wasserstein distance (this is a consequence of Theorem 6.9 in \cite{Villani_OTOldandNew_2008}).\\

  In the spirit of \cite{Bhattacharya_1982}, a CLT is also established in \cite{Lamberton_Pages_2002} for the empirical measures (\ref{eq:def_weight_emp_meas_intro}) of the Euler scheme with rate $n^{1/3}$. More specificaly, it is shown that, for a well chosen step sequence $(\gamma_n)_{n \in \mathbb{N}^{\ast}}$, when $n$ tends to infinity, $n^{1/3} \nu^{\gamma}_n(A f)$ converges in distribution towards the centered Gaussian distribution with variance $-2 \langle f, Af \rangle_{\nu}$. This whole study is made in a strongly mean reverting setting, and the extension to the weakly mean reverting setting has been realized first in \cite{Panloup_2008_rate}. \\

 Concerning the study of the almost sure convergence, the results established in \cite{Lamberton_Pages_2002} gave rise to many generalizations and extensions. In \cite{Lamberton_Pages_2003}, the initial result is extended to the case of Euler scheme of Brownian diffusions with weakly mean reverting properties. Thereafter, in \cite{Lemaire_thesis_2005}, the class of test functions for which we have $\lim\limits_{n \to + \infty} \nu^{\gamma}_n f =\nu f \; a.s.$ (when the invariant distribution is unique) is extended to include functions with exponential growth. Finally, in \cite{Panloup_2008}, the results concerning the polynomial case are shown to hold for the computation of invariant measures for weakly mean reverting Levy driven diffusion processes. For a more complete overview of the studies concerning (\ref{eq:def_weight_emp_meas_intro}) for the Euler scheme, the reader can also refer to \cite{Pages_2001_ergo}, \cite{Lemaire_2007}, \cite{Panloup_2008_rate}, \cite{Pages_Panloup_2009}, \cite{Pages_Panloup_2012} or \cite{Mei_Yin_2015}.\\
 
 Those results are extended in \cite{Pages_Rey_2020} and generalized to the abstract case where both the Markov transition sequence $( \mathscr{Q}_{\gamma_n})_{n \in\mathbb{N}^{\ast}}$ (and then $ (\overline{X}^{\gamma}_{\Gamma_n})_{n\in \mathbb{N}}$) and the Feller process $(X_t)_{t \geqslant 0}$ are not specified explicitly.  In \cite{Pages_Rey_2020}, abstract properties are developed to prove $a.s.$ weak convergence of (\ref{eq:def_weight_emp_meas_intro}) in this abstract framework.  In particular, it suggests various applications beyond the Euler scheme of Levy processes. See for instance \cite{Pages_Rey_2019_milstein}.  An interest of such an abstract framework is that it can be applied to schemes with higher $q$-weak order than the Euler scheme (which has weak order of convergence $q=1$).  In this paper, we aim to show that this procedure may improve the rate of convergence in the CLT from $n^{1/3}$ to $n^{q/(2q+1)}$.

\medskip
\noindent In particular, we extend the abstract framework introduced in \cite{Pages_Rey_2020} to prove the CLT in the weakly mean reverting setting.  We establish an abstract ergodic $q$-order CLT (see Theorem \ref{th:conv_gnl_ordre_q}) which enables to obtain a discretized version of \cite{Bhattacharya_1982} and recover results from \cite{Lamberton_Pages_2002}, \cite{Lemaire_thesis_2005}, \cite{Panloup_2008_rate} or \cite{Mei_Yin_2015} which are all restricted to the case $q=1$.  The proof of Theorem \ref{th:conv_gnl_ordre_q} relies both on the fact that we deal with a $q$-weak order stochastic approximation $(\overline{X}^{\gamma}_{\Gamma_n})_{n\in \mathbb{N}}$ for $(X_t)_{t \geqslant 0}$ and that we consider a generalization of (\ref{eq:def_weight_emp_meas_intro}), defined by
 \begin{align}
 \label{eq:def_weight_emp_meas_intro_weight}
 \nu^{\eta_q}_n(dx)=\frac{1}{H_n} \sum_{k=1}^n \eta_{q,k} \delta_{\overline{X}^{\gamma}_{\Gamma_{k-1}}}(dx), \qquad H_n=\sum\limits_{k=1}^n \eta_k,
 \end{align}
with $(\eta_{q,n})_{n \in \mathbb{N}^{\ast}}$ a well-chosen weight sequence given in (\ref{def:weight_q_order}). Notice that the weights for $q=1,2$ or $3$ appears as extension of the standards Riemann, Trapezoidal or Simpson's homogeneous approximations of integrals. Up to our knowledge, no second or higher order CLT had been derived in any situation so far in the literature. However, acceleration techniques inherited from multilevel Monte Carlo (see \cite{Giles_2008} for seminal paper) and inspired from Richardson-Romberg extrapolation already exist. For instance, we can refer to \cite{Lemaire_Pages_2017} or \cite{Pages_Panloup_2018} which allows to reach similar rates as with our approach that is $n^{R/(2R+1)}$, for the Richardson-Romberg method of order $R \geqslant 2$. \\

The paper is organized in the following way. Section \ref{section:convergence_inv_distrib_gnl} presents the results from \cite{Pages_Rey_2020} to obtain $a.s.$ weak convergence of (\ref{eq:def_weight_emp_meas_intro_weight}) in an abstract setting. The extension of this abstract framework to be adapted to derive $q$-order ergodic CLT is developed in Section \ref{Rate of convergence - A general approach} where our main abstract result is established (see Theorem \ref{th:conv_gnl_ordre_q}). Almost sure weak convergence and first order CLT for the Euler scheme are given as example at the end of Section \ref{Rate of convergence - A general approach}. 
 Then, in Section \ref{Application - The Talay second weak order scheme}, we apply Theorem \ref{th:conv_gnl_ordre_q} to the second weak order scheme of Talay for Brownian diffusion processes introduced in \cite{Talay_1990}. In particular, in Theorem \ref{th:cv_was_Talay}, we establish the $a.s.$ weak convergence of the empirical measures. We also establish a first order CLT for $(\nu^{\gamma}_n)_{n \in \mathbb{N}^{\ast}}$. In this case, the convergence has the same rate as for the Euler ($i.e.$ $n^{1/3}$) scheme. Finally, we establish the second order CLT for $(\nu^{\eta_2}_n)_{n \in \mathbb{N}^{\ast}}$ with rate $n^{2/5}$. This last result can not be obtained for the Euler scheme as it is simply a first weak order scheme.

\section{Convergence to invariant distributions - A general approach}
\label{section:convergence_inv_distrib_gnl}
%
In this section, we present the abstract framework from \cite{Pages_Rey_2020} to show the convergence of weighted empirical measures defined in a similar way as in (\ref{eq:def_weight_emp_meas_intro_weight}) and built from an approximation $(\overline{X}^{\gamma}_{\Gamma_n})_{n\in \mathbb{N}}$ of a Feller process $(X_t)_{t \geqslant 0}$ (which are not specified explicitly). Given that the step sequence $(\gamma_n)_{n \in \mathbb{N}^{\ast}} \underset{n \to + \infty}{\to}0$, it $a.s.$ weakly converges to the set $\mathcal{V}$, of the invariant distributions of $(X_t)_{t \geqslant 0}$. This framework is based on as weak as possible mean reverting assumptions on the pseudo-generator of $(\overline{X}^{\gamma}_{\Gamma_n})_{n\in \mathbb{N}}$ on the one hand and appropriate rate conditions on the step sequence $(\gamma_n)_{n \in \mathbb{N}^{\ast}}$ on the other hand.
\subsection{Presentation of the abstract framework}
\subsubsection{Notations}
Let $(E,\vert . \vert)$ be a locally compact separable metric space,  let$\mathcal{C}(E)$ the set of continuous functions on $E$ and $\mathcal{C}_0(E)$ the set of continuous functions that vanish at infinity. We equip this space with the sup norm $\Vert f \Vert_{\infty}=\sup_{x \in E} \vert f(x) \vert$ so that $(\mathcal{C}_0(E),\Vert . \Vert_{\infty})$ is a Banach space. We will denote $\mathcal{B}(E)$ the $\sigma$-algebra of Borel subsets of $E$ and $\mathcal{P}(E)$ the family of Borel probability measures on $E$. We will denote by $\mathcal{K}_E$ the set of compact subsets of $E$.\\
Finally, for every Borel function $f:E \to \mathbb{R}$, and every $l_{\infty} \in \mathbb{R} \cup \{-\infty,+\infty\}$, $\lim\limits_{x\to \infty}f(x)= l_{\infty}$ if and only if for every $\epsilon >0$, there exists a compact $K_{\epsilon} \subset \mathcal{K}_E$ such that $\sup_{x \in K_{\epsilon}^c} \vert f(x)- l_{\infty} \vert < \epsilon$ if $l_{\infty} \in \mathbb{R} $, $\inf_{x \in K_{\epsilon}^c}  f(x)  > 1/\epsilon$ if $l_{\infty} =+\infty$, and $\sup\limits_{x \in K_{\epsilon}^c}  f(x)  < -1/\epsilon$ if $l_{\infty} =-\infty$ with $K_{\epsilon}^c=E \setminus K_{\epsilon}.$  \\
%
%
%
%
\subsubsection{Construction of the random measures}
\label{sec:Construction of the random measures}
Let $(\Omega,\mathcal{G}, \mathbb{P})$ be a probability space. We consider a Feller process $(X_t)_{t \geqslant 0}$ (see \cite{Feller_1952} for details) on $(\Omega,\mathcal{G}, \mathbb{P})$ taking values in a locally compact and separable metric space $E$. We denote by $(P_t)_{t \geqslant 0}$ the Feller semigroup (see \cite{Pazy_1992}) of this process. We recall that $(P_t)_{t \geqslant 0}$  is a family of linear operators from $\mathcal{C}_0(E)$ to itself such that $P_0 f=f$, $P_{t+s}f=P_tP_sf$, $t,s \geqslant 0$ (semigroup property) and $\lim\limits_{t \to 0} \Vert P_tf-f \Vert_{\infty}=0$ (Feller property). Using this semigroup, we can introduce the infinitesimal generator of $(X_t)_{t \geqslant 0}$ as a linear operator $A$ defined on a subspace $\DomA$ of $\mathcal{C}_0(E)$, satisfying: For every $f \in \DomA$,
\begin{align*}
Af= \lim\limits_{t \to 0} \frac{P_tf-f}{t}
\end{align*}
exists for the $\Vert . \Vert_{\infty}$-norm. The operator $A: \DomA \to \mathcal{C}_0(E)$ is thus well defined and $\DomA$ is called the domain of $A$. As a consequence of the Echeverria Weiss theorem (see $e.g.$~\cite{Ethier_Kurtz_1986}, Theorem 9.17), the set of invariant distributions for $(X_t)_{t \geqslant 0}$ can be characterized in the following way: 
\begin{align*}
\mathcal{V}=\{ \nu \in \mathcal{P}(E), \forall t \geqslant 0, P_t \nu=  \nu \}=\{ \nu \in \mathcal{P}(E), \forall f \in \DomA, \nu(Af)=0 \}.
\end{align*}
The starting point of our reasoning is thus to consider an approximation of $A$. First, we introduce the family of transition kernels $(\mathscr{Q}_{\gamma})_{\gamma >0}$ from $\mathcal{C}_0(E)$ to itself. Now, let us define the family of linear operators $\widetilde{A} : = (\widetilde{A}_{\gamma} )_{ \gamma >0}$ from $\mathcal{C}_0(E)$ into itself, as follows
 \begin{equation*}
 \forall f \in \mathcal{C}_0(E), \quad \gamma>0,  \qquad \widetilde{A}_{\gamma}f=\frac{\mathscr{Q}_{\gamma}f -f}{\gamma}.
 \end{equation*}
The family $\widetilde{A}$ is usually called the pseudo-generator of the transition kernels $(\mathscr{Q}_{\gamma})_{\gamma>0}$ and is an approximation of $A$ as $\gamma$ tends to zero. From a practical viewpoint, the main interest of our approach is that it is reasonable to assume that there exists $\overline{\gamma}>0$ such that for every $x \in E$ and every $\gamma \in [0, \overline{\gamma}]$, $\mathscr{Q}_{\gamma} (x,dy)$ is simulable at a reasonable computational cost. The family $(\mathscr{Q}_{\gamma})_{\gamma>0}$ is used to build $(\overline{X}_{\Gamma_n})_{n \in \mathbb{N}}$ (this notation replaces $(\overline{X}^{\gamma}_{\Gamma_n})_{n \in \mathbb{N}}$ from now for clarity in the writing) as the non-homogeneous Markov approximation of the Feller process $(X_t)_{t \geqslant 0}$. It is defined on the time grid $\{ \Gamma_n=\sum\limits_{k=1}^n \gamma_k, n \in \mathbb{N} \} $ with the time-step sequence $\gamma:=(\gamma_n)_{n\in \mathbb{N}^{\ast} }$ satisfying
\begin{align*}
\forall n \in \mathbb{N}^{\ast}, \quad 0 < \gamma_n  \leqslant \overline{\gamma}:= \sup_{n \in \mathbb{N}^{\ast}} \gamma_n< + \infty, \quad \lim\limits_{n \to + \infty} \gamma_n = 0 \quad \mbox{ and } \quad \lim\limits_{n \to + \infty}\Gamma_n=+ \infty.
\end{align*}
Notice that we will sometimes use the notation $\gamma_{-m}$ for $m \in \mathbb{N}$. In this case we will always use the convention $\gamma_{-m}=0$. The transition probability distributions of $(\overline{X}_{\Gamma_n})_{n \in \mathbb{N}}$ are given by $\mathscr{Q}_{\gamma_n} (x,dy),n\in \mathbb{N}^{\ast}$, $x\in E$, $i.e. :$
\begin{align*}
 \mathbb{P}(\overline{X}_{\Gamma_{n+1}} \in dy \vert \overline{X}_{\Gamma_n})= \mathscr{Q}_{\gamma_{n+1}}(\overline{X}_{\Gamma_n},dy), \quad n \in \mathbb{N}.
\end{align*}
%
%
We can canonically extend $(\overline{X}_{\Gamma_n})_{n \in \mathbb{N}}$ into a \textit{c\`adl\`ag} process by setting $\overline{X}(t,\omega) =\overline{X}_{\Gamma_{n(t)}}(\omega)$ with $n(t)= \inf \{n \in \mathbb{N}, \Gamma_{n+1}>t \}$. Then $(\overline{X}_{\Gamma_n})_{n \in \mathbb{N}}$ is a simulable (as soon as $\overline{X}_0$ is) non-homogeneous Markov chain with transitions 
\begin{align*}
\forall m  \leqslant n, \qquad \overline{P}_{\Gamma_m,\Gamma_n}(x,dy)= \mathscr{Q}_{\gamma_{m+1}} \circ \cdots \circ \mathscr{Q}_{\gamma_n}(x,dy),
\end{align*}
and law
\begin{align*}
\mathcal{L}(\overline{X}_{\Gamma_n} \vert \overline{X}_{0}=x)=\overline{P}_{\Gamma_n}(x,dy)= \mathscr{Q}_{\gamma_1} \circ \cdots \circ \mathscr{Q}_{\gamma_n}(x,dy).
\end{align*}
We use $(\overline{X}_{\Gamma_n})_{n \in \mathbb{N}}$ to design a Langevin Monte Carlo algorithm. Notice that this approach is generic since the approximation transition kernels $(\mathscr{Q}_{\gamma})_{\gamma>0}$ are not explicitly specified and then, it can be used in many different configurations including among others, weak numerical schemes or exact simulation $i.e.$ $(\overline{X}_{\Gamma_n})_{n \in \mathbb{N}}=(X_{\Gamma_n})_{n \in \mathbb{N}}$. This is of main interest in this paper as we show later that using high weak order schemes of $(X_t)_{t \geqslant 0}$ leads to higher rates of convergence in the CLT satisfied by the weighted empirical measures. Notice that weighted empirical measures are built in a quite more general way than in (\ref{eq:def_weight_emp_meas_intro}) as we consider some general weights which are not necessarily equal to the time steps. We define this weight sequence. Let $\eta:=(\eta_n)_{n \in \mathbb{N}^{\ast}}$ be such that
\begin{equation}
\label{eq:weight_def}
\forall n \in \mathbb{N}^{\ast}, \quad \eta_n \geqslant 0, \quad \lim\limits_{n \to + \infty} H_n=+ \infty, \qquad \mbox{with} \qquad H_n:= H_{\eta,n}= \sum\limits_{k=1}^n \eta_k.
\end{equation}
Now we present our algorithm introduced in \cite{Pages_Rey_2020} and adapted from the one introduced in \cite{Lamberton_Pages_2002} designed with a Euler scheme with decreasing step  $(\overline{X}_{\Gamma_n})_{n \in \mathbb{N}}$ of a Brownian diffusion process $(X_t)_{t \geqslant 0}$. For $x \in E$, let $\delta_x$ denote the Dirac mass at point $x$. For every $n \in \mathbb{N}^{\ast}$, we define the random weighted empirical random measures as follows
 \begin{equation}
 \label{eq:def_weight_emp_meas}
 \nu^{\eta}_n(dx)=\frac{1}{H_n} \sum_{k=1}^n \eta_k \delta_{\overline{X}_{\Gamma_{k-1}}}(dx).
 \end{equation}
%
%
%
%
%
This section of the paper is dedicated to present how to prove that $a.s.$ every weak limiting distribution of $(\nu^{\eta}_n)_{n \in \mathbb{N}^{\ast}}$ belongs to $\mathcal{V}$. In particular when the invariant measure of $(X_t)_{t \geqslant 0}$ is unique, $i.e. \; \mathcal{V}=\{\nu\}$, then $\mathbb{P}-a.s. \;\lim\limits_{n \to + \infty} \nu^{\eta}_n f =\nu f  $, for a generic class of continuous test functions $f$. The approach consists in two steps. First, we establish a tightness property to obtain existence of at least one weak limiting distribution for $(\nu^{\eta}_n )_{n \in \mathbb{N}^{\ast}}$. Then, in a second step, we identify everyone of these limiting distributions with an invariant distributions of the Feller process $(X_t)_{t \geqslant 0}$.
\subsubsection{Assumptions on the random measures}
%
In this part, we present the necessary assumptions on the pseudo-generator $\widetilde{A}  = (\widetilde{A}_{\gamma} )_{ \gamma >0}$ in order to prove the convergence of the empirical measures $(\nu^{\eta}_n)_{n \in \mathbb{N}^{\ast}}$.
\paragraph{Mean reverting recursive control \\}
 In this framework, we introduce a well suited assumption, referred to as the mean reverting recursive control of the pseudo-generator $\widetilde{A}$. This assumption leads to a tightness property on $(\nu^{\eta}_n)_{n \in \mathbb{N}^{\ast}}$ from which follows the existence (in weak sense) of a limiting distribution for $(\nu^{\eta}_n)_{n \in \mathbb{N}^{\ast}}$. A supplementary interest of this approach is that it is designed to obtain the $a.s.$ convergence of $(\nu^{\eta}_n(f))_{n \in \mathbb{N}^{\ast}}$ for a generic class of continuous test functions $f$ which is larger then $\mathcal{C}_b(E)$. To do so, we introduce a Lyapunov function $V$ related to $(\overline{X}_{\Gamma_n})_{n \in \mathbb{N}}$. Assume that $V$ a Borel function such that
\begin{equation}
\label{hyp:Lyapunov}
\mbox{L}_{V}  \quad \equiv \qquad   V :E \to [v_{\ast},+\infty), v_{\ast}> 0 \quad  \mbox{ and } \quad \lim\limits_{ x  \to \infty} V(x)=+ \infty. \\
\end{equation}
We now relate $V$ to $(\overline{X}_{\Gamma_n})_{n \in \mathbb{N}}$ introducing its mean reversion Lyapunov property. Let $\psi, \phi : [v_{\ast},\infty) \to (0,+\infty) $ some Borel functions such that $\widetilde{A}_{\gamma}\psi \circ V$ exists for every $\gamma \in (0, \overline{\gamma}]$. Let $\alpha>0$ and $\beta \in \mathbb{R}$. We assume  
%
%
 \begin{eqnarray}
\label{hyp:incr_sg_Lyapunov}
&\mathcal{RC}_{Q,V} (\psi,\phi,\alpha,\beta) \quad \equiv  \nonumber\\
& \quad  \left\{
    \begin{array}{l}
  (i) \; \quad \exists n_0 \in \mathbb{N}^{\ast},   \forall n   \geqslant n_0, x \in E,  \quad\widetilde{A}_{\gamma_n}\psi \circ V(x)\leqslant  \frac{\psi \circ V(x)}{V(x)}(\beta - \alpha \phi \circ V(x)). \\
  (ii) \quad     \liminf\limits_{y \to + \infty} \phi(y)> \beta / \alpha .
    \end{array}
\right.
\end{eqnarray}

$\mathcal{RC}_{Q,V} (\psi,\phi,\alpha,\beta)$ is called the weakly mean reverting recursive control assumption of the pseudo generator for Lyapunov function $V$. \\

Lyapunov functions are usually used to show the existence and sometimes the uniqueness (see $e.g.$ \cite{DaPrato_Frankowska_2004} or \cite{Bianca_Dogbe_2017}) of the invariant measure of Feller processes. In particular, when $p=1$, the condition $\mathcal{RC}_{Q,V}(I_d,I_d,\alpha,\beta) (i)$ appears as the discrete version of $AV \leqslant \beta-\alpha V$, which is used in that interest for instance in \cite{Hasminskii_1980}, \cite{Ethier_Kurtz_1986}, \cite{Basak_Hu_Wei_1997} or \cite{Pages_2001_ergo}. \\

 The condition $\mathcal{RC}_{Q,V}(V^p,I_d,\alpha,\beta) (i)$, $p \geqslant 1$, is studied in the seminal paper \cite{Lamberton_Pages_2002} (and then in \cite{Lamberton_Pages_2003} with $\phi(y)=y^a,a\in (0,1]$,$y \in [v_{\ast},\infty)$) concerning the Wasserstein convergence of the weighted empirical measures of the Euler scheme with decreasing step  of a Brownian diffusion. When $\phi=I_d$, the Euler scheme is also studied for Markov switching Brownian diffusions in \cite{Mei_Yin_2015}. Notice also that $\mathcal{RC}_{Q,V}(I_d,\phi,\alpha,\beta) (i)$ with $\phi$ concave appears in \cite{DFMS_2004} to prove sub-geometrical ergodicity of Markov chains. In \cite{Lemaire_thesis_2005}, a similar hypothesis to $\mathcal{RC}_{Q,V}(I_d,\phi,\alpha,\beta) (i)$ (with $\phi$ not necessarily concave and $\widetilde{A}_{\gamma_n}$ replaced by $A$), is also used  to study the Wasserstein but also exponential convergence of the weighted empirical measures (\ref{eq:def_weight_emp_meas}) for the Euler scheme of a Brownian diffusion. Finally in \cite{Panloup_2008} similar properties as $\mathcal{RC}_{Q,V}(V^p,V^a,\alpha,\beta) (i)$, $a\in (0,1]$, $p >0$, are developed in the study of the Euler scheme for Levy processes.\\

On the one hand, the function $\phi$ controls the mean reverting property. In particular, we call strongly mean reverting property when $\phi=I_d$ and weakly mean reverting property when $\lim\limits_{y \to +\infty} \phi(y)/y=0$, for instance $\phi(y)=y^a$, $a \in (0,1)$ for every $y \in [v_{\ast},\infty)$. On the other hand, the function $\psi$ is closely related to the identification of the set of test functions $f$ for which we have $\lim\limits_{n \to +\infty} \nu^{\eta}_n(f)=\nu(f) \; a.s.$, when $\nu$ is the unique invariant distribution of the underlying Feller process.\\
 
  To this end, for $s \geqslant 1$, which is related to step weight assumption, we introduce the sets of test functions for which we will show the $a.s.$ convergence of the weighted empirical measures (\ref{eq:def_weight_emp_meas}):
\begin{align}
\label{def:espace_test_function_cv}
\mathcal{C}_{\tilde{V}_{\psi,\phi,s}}(E)=& \big\{ f \in \mathcal{C}(E), \vert f(x) \vert=\underset{  x \to \infty}{o}( \tilde{V}_{\psi,\phi,s} (x) ) \big\}, \\
&\mbox{with} \quad \tilde{V}_{\psi,\phi,s}:E \to \mathbb{R}_+, x \mapsto\tilde{V}_{\psi,\phi,s}(x): =\frac{\phi\circ V(x)\psi \circ V(x)^{1/s}}{V(x)}. \nonumber
\end{align}
Notice that our approach benefits from providing generic results because we consider general Feller processes and approximations but also because the functions $\phi$ and $\psi$ are not specified explicitly.

\paragraph{Infinitesimal generator approximation \\}
This section presents the assumption that enables to characterize the limiting distributions of the $a.s.$ tight  sequence $(\nu^{\eta}_n(dx, \omega))_{n \in \mathbb{N}^{\ast}}$.
 We aim to estimate the distance between $\mathcal{V}$ and $\nu^{\eta}_n$ (see (\ref{eq:def_weight_emp_meas})) for $n$ large enough. We thus introduce an hypothesis concerning the distance between $(\widetilde{A}_{\gamma} )_{ \gamma>0}$, the pseudo-generator of $(\mathscr{Q}_{\gamma} )_{ \gamma>0}$, and $A$, the infinitesimal generator of $(P_t)_{t \geqslant 0}$. We assume that there exists $\DomA_0 \subset \DomA$ with $\DomA_0 $ dense in $\mathcal{C}_0(E)$ such that:
%
%
 \begin{align}
\mathcal{E}(\widetilde{A},A,\DomA_0) \quad \equiv \qquad   \forall \gamma \in (0, \overline{\gamma}], & \forall f \in \DomA_0, \forall x \in E,   \nonumber \\
&  \vert \widetilde{A}_{\gamma} f(x) -Af(x)\vert \leqslant  \Lambda_f(x,\gamma),
 \label{hyp:erreur_tems_cours_fonction_test_reg}
\end{align}
where $\Lambda_{f}:E \times \mathbb{R}_+ \to \mathbb{R}_+$ can be represented in the following way: Let $(\tilde{\Omega},\tilde{\mathcal{G}},\tilde{\mathbb{P}})$ be a probability space. Let $g :E\to \mathbb{R}_+^{q}$, $q \in \mathbb{N}$, be a locally bounded Borel measurable function and let $\tilde{\Lambda}_{f}:(E\times \mathbb{R}_+ \times \tilde{\Omega}, \mathcal{B}(E) \otimes \mathcal{B}(\mathbb{R}_+) \otimes \tilde{\mathcal{G}}) \to \mathbb{R}_+^{q}$ be a measurable function such that  $\sup_{i \in \{1,\ldots,q\} } \tilde{\mathbb{E}}[ \sup_{x \in E} \sup_{\gamma \in (0,\overline{\gamma}] } \tilde{\Lambda}_{f,i}(x,\gamma, \tilde{\omega}) ]< + \infty$ and that we have the representation
%
%
%
\begin{align*}
\forall x \in E , \forall \gamma \in (0,\overline{\gamma}], \qquad \Lambda_f(x,\gamma)= \langle g (  x ) ,\tilde{\mathbb{E}} [\tilde{\Lambda}_{f}(x,\gamma, \tilde{\omega})]  \rangle_{\mathbb{R}^q}
\end{align*}
%
%
%
%
%
%
%
Moreover, we assume that for every $i \in \{1,\ldots,q\}$, $\sup_{n \in \mathbb{N}^{\ast}} \nu_n^{\eta}( g_i ,\omega )< + \infty, \; \mathbb{P}(d\omega)-a.s.$, and that $\tilde{\Lambda}_{f,i}$ satisfies one of the following two properties:\\
There exists a measurable function $\underline{\gamma}:(\tilde{\Omega}, \tilde{\mathcal{G}}) \to((0, \overline{\gamma}],\mathcal{B}((0, \overline{\gamma}]) )$ such that:
\begin{enumerate}[label=\textbf{\Roman*)}]
\item \label{hyp:erreur_tems_cours_fonction_test_reg_Lambda_representation_cv_1} 
\inlineequation[hyp:erreur_temps_cours_fonction_test_reg_Lambda_representation_cv_2_1]{
 \tilde{\mathbb{P}}(d\tilde{\omega})-a.s \qquad \left\{
    \begin{array}{l}
  (i)   \quad \; \;  \forall K \in \mathcal{K}_E ,   \quad  \lim\limits_{\gamma \to 0} \sup\limits_{x \in K} \tilde{\Lambda}_{f,i}(x, \gamma,\tilde{\omega})=0, \qquad \quad \\
  (ii) \quad     \lim\limits_{x \to \infty}  \sup\limits_{\gamma \in (0,\underline{\gamma}(\tilde{\omega}) ]} \tilde{\Lambda}_{f,i}(x, \gamma,\tilde{\omega})=0,  \qquad \qquad \quad 
    \end{array}
\right.
}\\
or
\item \label{hyp:erreur_temps_cours_fonction_test_reg_Lambda_representation_cv_2}\inlineequation[hyp:erreur_temps_cours_fonction_test_reg_Lambda_representation_cv_2_2]{
 \tilde{\mathbb{P}}(d\tilde{\omega})-a.s \qquad  \lim\limits_{\gamma \to 0} \sup\limits_{x \in E} \tilde{\Lambda}_{f,i}(x, \gamma,\tilde{\omega}) g_i(x) =0  . \qquad \qquad \qquad \qquad \qquad \; \;}
\end{enumerate}
\begin{remark}
\label{rmk:representation_mesure_infinie}
Let $(F,\mathcal{F},\lambda)$ be a measurable space. Using the exact same approach, the results we obtain hold when we replace the probability space $(\tilde{\Omega},\tilde{\mathcal{G}},\tilde{\mathbb{P}})$ by the product measurable space $(\tilde{\Omega} \times F,\tilde{\mathcal{G}} \otimes \mathcal{F},\tilde{\mathbb{P}}\otimes \lambda)$ in the representation of $\Lambda_f$ and in (\ref{hyp:erreur_temps_cours_fonction_test_reg_Lambda_representation_cv_2_1}) and (\ref{hyp:erreur_temps_cours_fonction_test_reg_Lambda_representation_cv_2_2}) but we restrict to that case for sake of clarity in the writing. This observation can be useful when we study jump process where $\lambda$ can stand for the jump intensity.
\end{remark}
This representation assumption benefits from the fact that the transition functions $(\mathscr{Q}_{\gamma} (x,dy))_{\gamma \in (0, \overline{\gamma}]}$, $x \in E$, can be represented using distributions of random variables which are involved in the computation of $(\overline{X}_{\Gamma_n})_{n \in \mathbb{N}^{\ast}}$. In particular, this approach is well adapted to stochastic approximations associated to a time grid such as numerical schemes for stochastic differential equations with a Brownian part or/and a jump part. 
\paragraph{Growth control and Step Weight assumptions \\}
We conclude with hypotheses concerning the control of the martingale increments of functions of the approximation $(\overline{X}_{\Gamma_n})_{n \in \mathbb{N}^{\ast}}$. Let $\rho \in [1,2]$ and let $\epsilon_{\mathcal{I}} : \mathbb{R}_+ \to \mathbb{R}_+$ an increasing function. For $F \subset \{f,f:(E, \mathcal{B}(E)) \to (\mathbb{R}, \mathcal{B}(\mathbb{R}) ) \}$ and $g:E \to \mathbb{R}_+$ a Borel function, we assume that, for every $n \in \mathbb{N}$,
 \begin{align}
\label{hyp:incr_X_Lyapunov}
\mathcal{GC}_{Q} & (F,g,\rho,\epsilon_{\mathcal{I}}) \;  \equiv \quad  \mathbb{P}-a.s.\quad  \forall f \in F, \nonumber \\
& \mathbb{E}[ \vert  f  ( \overline{X}_{\Gamma_{n+1}})- \mathscr{Q}_{\gamma_{n+1}}f(\overline{X}_{\Gamma_n}) \vert^{\rho}\vert \overline{X}_{\Gamma_n} ]  \leqslant   C_f \epsilon_{\mathcal{I}}(\gamma_{n+1})  g (\overline{X}_{\Gamma_n}) ,
\end{align}
with $C_f>0$ a finite constant which may depend on $f$. 
\begin{remark}\label{rmrk:Accroiss_mes} The reader may notice that $\mathcal{GC}_{Q}(F,g,\rho,\epsilon_{\mathcal{I}}) $ holds as soon as (\ref{hyp:incr_X_Lyapunov}) is satisfied with $\mathscr{Q}_{\gamma_{n+1}}f(\overline{X}_{\Gamma_n})$, $n  \in \mathbb{N}^{\ast} $, replaced by a $\mathcal{F}^{\overline{X}}_n:=\sigma(\overline{X}_{\Gamma_k},k \leqslant n)$- progressively  measurable process $(\mathfrak{X}_n)_{n \in \mathbb{N}^{\ast}}$ since we have $\mathscr{Q}_{\gamma_{n+1}}f(\overline{X}_{\Gamma_n}) =\mathbb{E}[f(\overline{X}_{\Gamma_{n+1}}) \vert \overline{X}_{\Gamma_n}]$ and $\mathbb{E}[ \vert  f  ( \overline{X}_{\Gamma_{n+1}})- \mathscr{Q}_{\gamma_{n+1}}f(\overline{X}_{\Gamma_n}) \vert^{\rho}\vert \overline{X}_{\Gamma_n} ]  \leqslant 2^{\rho} \mathbb{E}[ \vert  f  ( \overline{X}_{\Gamma_{n+1}})- \mathfrak{X}_n \vert^{\rho}\vert \overline{X}_{\Gamma_n} ]$ for every $\mathfrak{X}_n \in \LL^2(\mathcal{F}^{\overline{X}}_n)$.
\end{remark}

We will combine this first assumption with the following step weight related ones:
\begin{equation}
 \label{hyp:step_weight_I_gen_chow}
\mathcal{S}\mathcal{W}_{\mathcal{I}, \gamma,\eta}(g, \rho , \epsilon_{\mathcal{I}}) \quad \equiv \qquad   \mathbb{P}-a.s. \quad   \sum_{n=1}^{\infty} \Big \vert \frac{\eta_n }{H_n \gamma_n } \Big \vert^{\rho} \epsilon_{\mathcal{I}}(\gamma_n)  g(\overline{X}_{\Gamma_n})  < + \infty,
 \end{equation}

 and
 \begin{align}
 \label{hyp:step_weight_I_gen_tens}
\mathcal{S}\mathcal{W}_{\mathcal{II},\gamma,\eta}(F) \quad \equiv \quad \mathbb{P}-a.s. & \quad \forall f \in F, \nonumber \\
&   \sum_{n=0}^{\infty} \frac{(\eta_{n+1} /\gamma_{n+1}-\eta_n /\gamma_n)_+ }{H_{n+1} }  \vert f(\overline{X}_{\Gamma_n}) \vert < + \infty,
 \end{align}
with the convention $\eta_0/\gamma_0=1$. Notice that this last assumption holds as soon as the sequence $(\eta_n / \gamma_n)_{n \in \mathbb{N}^{\ast} }$ is non-increasing. \\

We end this section presenting the main results concerning the almost sure convergence of empirical measures which are used in this article. Those results were established in \cite{Pages_Rey_2020} in an abstract framework that we will extend to establish CLT.

\subsubsection{Almost sure tightness}
From the recursive control assumption, Theorem \ref{th:tightness} establishes the $a.s.$ tightness of the sequence $(\nu^{\eta}_n)_{n \in \mathbb{N}^{\ast}}$ and also provides a uniform control of $(\nu^{\eta}_n)_{n \in \mathbb{N}^{\ast}}$ on a generic class of test functions. 
%
%
%
\begin{mytheo}
\label{th:tightness}
Let $s \geqslant 1$, $\rho \in[1,2]$, $v_{\ast}>0$, and let us consider the Borel functions $V :E \to [v_{\ast},\infty)$, $g:E \to \mathbb{R}_+$, $\psi : [v_{\ast},\infty) \to \mathbb{R}_+ $ and $\epsilon_{\mathcal{I}} : \mathbb{R}_+ \to \mathbb{R}_+$ an increasing function. We have the following properties:
\begin{enumerate}[label=\textbf{\Alph*.}]
\item\label{th:tightness_point_A}  Assume that $\widetilde{A}_{\gamma_n}(\psi \circ V)^{1/s}$ exists for every $n \in \mathbb{N}^{\ast}$, and that $\mathcal{GC}_{Q}((\psi \circ V)^{1/s},g ,\rho ,\epsilon_{\mathcal{I}}) $ (see (\ref{hyp:incr_X_Lyapunov})), $\mathcal{S}\mathcal{W}_{\mathcal{I}, \gamma,\eta}( g,\rho,\epsilon_{\mathcal{I}}) $ (see (\ref{hyp:step_weight_I_gen_chow})) and $\mathcal{S}\mathcal{W}_{\mathcal{II},\gamma,\eta}((\psi \circ V)^{1/s}) $ (see (\ref{hyp:step_weight_I_gen_tens}) hold. Then
\begin{equation}
\label{eq:invariance_mes_emp_Lyap_gen}
\mathbb{P} \mbox{-a.s.} \quad  \sup_{n\in \mathbb{N}^{\ast} } - \frac{1}{H_n} \sum_{k=1}^n \eta_k \widetilde{A}_{\gamma_k} (\psi \circ V)^{1/s}  (\overline{X}_{\Gamma_{k-1}})< + \infty.
\end{equation}
\item\label{th:tightness_point_B}
Let $\alpha>0$ and $\beta \in \mathbb{R}$. Let $\phi:[v_{\ast},\infty )\to \mathbb{R}_+^{\ast}$ be a continuous function such that $C_{\phi}:= \sup_{y \in [v_{\ast},\infty )}\phi(y)/y< \infty$. Assume that (\ref{eq:invariance_mes_emp_Lyap_gen}) holds and
\begin{enumerate}[label=\textbf{\roman*.}]
\item $\mathcal{RC}_{Q,V}(\psi,\phi,\alpha,\beta)$ (see (\ref{hyp:incr_sg_Lyapunov})) holds.
\item $\mbox{L}_{V}$ (see (\ref{hyp:Lyapunov})) holds and $\lim\limits_{y \to +\infty}  \frac{\phi(y) \psi (y)^{1/s}}{y}=+\infty$.
\end{enumerate}
Then,
 \begin{equation}
 \label{eq:tightness_gen}
\mathbb{P} \mbox{-a.s.} \quad \sup_{n \in \mathbb{N}^{\ast}} \nu_n^{\eta}( \tilde{V}_{\psi,\phi,s} ) < + \infty .
\end{equation}
with $\tilde{V}_{\psi,\phi,s}$ defined in (\ref{def:espace_test_function_cv}). Therefore, the sequence $(\nu^{\eta}_n)_{n \in \mathbb{N}^{\ast}}$ is $\mathbb{P}-a.s.$ tight. 
 \end{enumerate}
\end{mytheo}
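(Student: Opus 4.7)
The plan is to treat parts \textbf{\ref{th:tightness_point_A}} and \textbf{\ref{th:tightness_point_B}} successively. Setting $F:=(\psi\circ V)^{1/s}$, the basic decomposition
\[
-\eta_k\widetilde{A}_{\gamma_k}F(\overline{X}_{\Gamma_{k-1}})=\frac{\eta_k}{\gamma_k}\bigl[F(\overline{X}_{\Gamma_{k-1}})-F(\overline{X}_{\Gamma_k})\bigr]+\frac{\eta_k}{\gamma_k}\Delta M_k,\qquad \Delta M_k:=F(\overline{X}_{\Gamma_k})-\mathscr{Q}_{\gamma_k}F(\overline{X}_{\Gamma_{k-1}}),
\]
separates a telescoping contribution from an $(\mathcal{F}^{\overline{X}}_k)$-martingale increment, and the whole of part \textbf{\ref{th:tightness_point_A}} amounts to controlling both uniformly in $n$ after division by $H_n$. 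Abel summation on the telescoping sum produces the boundary terms $\eta_1 F(\overline{X}_{\Gamma_0})/(\gamma_1 H_n)=O(1/H_n)$ and $-\eta_n F(\overline{X}_{\Gamma_n})/(\gamma_n H_n)\leqslant 0$ (favourable since $F\geqslant 0$), plus the bulk $\frac{1}{H_n}\sum_{k=1}^{n-1}(\eta_{k+1}/\gamma_{k+1}-\eta_k/\gamma_k)F(\overline{X}_{\Gamma_k})$, dominated by the a.s.\ finite series $\sum_{k\geqslant 0}(\eta_{k+1}/\gamma_{k+1}-\eta_k/\gamma_k)_+F(\overline{X}_{\Gamma_k})/H_{k+1}$ given by $\mathcal{S}\mathcal{W}_{\mathcal{II},\gamma,\eta}(F)$. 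For the martingale piece, introduce the auxiliary martingale $N_n:=\sum_{k=1}^n \eta_k/(\gamma_k H_k)\,\Delta M_k$; combining $\mathcal{GC}_{Q}$ with $\mathcal{S}\mathcal{W}_{\mathcal{I},\gamma,\eta}$ yields $\sum_k\mathbb{E}[|\Delta N_k|^{\rho}\mid\mathcal{F}^{\overline{X}}_{k-1}]<+\infty$ a.s., so Chow's a.s.\ convergence theorem for $\LL^\rho$-martingales ($\rho\in[1,2]$) delivers the convergence of $N_n$, and Kronecker's lemma applied with the deterministic weights $H_k\uparrow\infty$ gives $H_n^{-1}\sum_{k=1}^n(\eta_k/\gamma_k)\Delta M_k\to 0$ a.s. Combining the two pieces proves (\ref{eq:invariance_mes_emp_Lyap_gen}).

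For part \textbf{\ref{th:tightness_point_B}}, the strategy is to transfer the mean-reverting control from $\psi\circ V$ to $F$ and then close with part~\textbf{\ref{th:tightness_point_A}}. Since $y\mapsto y^{1/s}$ is concave for $s\geqslant 1$, Jensen's inequality gives $\mathscr{Q}_{\gamma_n}F\leqslant(\mathscr{Q}_{\gamma_n}(\psi\circ V))^{1/s}$, and the tangent-line inequality at $b=\psi\circ V$ produces
\[
\mathscr{Q}_{\gamma_n}F(x)-F(x)\leqslant \tfrac{1}{s}(\psi\circ V(x))^{1/s-1}\bigl[\mathscr{Q}_{\gamma_n}(\psi\circ V)(x)-\psi\circ V(x)\bigr].
\]
Dividing by $\gamma_n$ and substituting $\mathcal{RC}_{Q,V}(\psi,\phi,\alpha,\beta)(i)$ yields, for $n\geqslant n_0$,
\[
\widetilde{A}_{\gamma_n}F(x)\leqslant \tfrac{1}{s}\,\tfrac{F(x)}{V(x)}\bigl(\beta-\alpha\phi\circ V(x)\bigr).
\]
By $\mathcal{RC}_{Q,V}(\psi,\phi,\alpha,\beta)(ii)$, pick $y_1>v_\ast$ and $c>0$ with $\alpha\phi(y)-\beta\geqslant c\phi(y)$ for $y\geqslant y_1$; then $-\widetilde{A}_{\gamma_n}F\geqslant(c/s)\tilde{V}_{\psi,\phi,s}$ on $\{V\geqslant y_1\}$. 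On the relatively compact (by $\mbox{L}_V$) set $\{V<y_1\}$, both $\tilde{V}_{\psi,\phi,s}$ and $\widetilde{A}_{\gamma_n}F$ are bounded uniformly in $n\in\mathbb{N}^\ast$ (the latter using the pointwise upper estimate just derived, the continuity of $\phi$ on $[v_\ast,y_1]$ and $(\psi\circ V)^{1/s-1}\geqslant v_\ast^{1/s-1}$), so absorbing the boundary into a constant $M\in\mathbb{R}_+$ produces the pointwise inequality
\[
\tilde{V}_{\psi,\phi,s}(x)\leqslant M-\tfrac{s}{c}\,\widetilde{A}_{\gamma_n}F(x),\qquad n\geqslant n_0,\;x\in E.
\]
Integrating against $\nu_n^\eta$ and invoking (\ref{eq:invariance_mes_emp_Lyap_gen}) shows $\sup_n\nu_n^\eta(\tilde{V}_{\psi,\phi,s})<+\infty$ $\mathbb{P}$-a.s. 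Finally, $\tilde{V}_{\psi,\phi,s}(x)\to+\infty$ as $x\to\infty$ by the hypothesis $\phi(y)\psi(y)^{1/s}/y\to+\infty$ combined with $\mbox{L}_V$, hence the sublevel sets of $\tilde{V}_{\psi,\phi,s}$ are relatively compact, and Markov's inequality immediately yields the $\mathbb{P}$-a.s.\ tightness of $(\nu_n^\eta)_{n\in\mathbb{N}^\ast}$.

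The main technical obstacle I anticipate is the uniform (in $n\geqslant n_0$) extension of the pointwise bound over $\{V<y_1\}$ in part \textbf{\ref{th:tightness_point_B}}: unlike the region $\{V\geqslant y_1\}$, where mean reversion directly compares $-\widetilde{A}_{\gamma_n}F$ with $\tilde{V}_{\psi,\phi,s}$, on the compact region $\widetilde{A}_{\gamma_n}F$ has no obvious a priori lower bound and one must rely on the upper estimate produced by $\mathcal{RC}$ itself. A secondary subtlety lies in the $\rho=1$ case of the martingale argument in part \textbf{\ref{th:tightness_point_A}}, where Chow's $\LL^\rho$ statement must be replaced either by the conditional Borel--Cantelli lemma or by direct absolute summability of $(\Delta N_k)_{k\geqslant 1}$ deduced from the a.s.\ finiteness of $\sum_k\mathbb{E}[|\Delta N_k|\mid\mathcal{F}^{\overline{X}}_{k-1}]$.
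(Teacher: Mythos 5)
Your route is the standard one and, up to two points noted below, it is correct; note that the paper itself does not reprove Theorem \ref{th:tightness} (it is recalled from \cite{Pages_Rey_2017}), and your argument is essentially the argument used there: for part \ref{th:tightness_point_A}, the splitting of $-\eta_k\widetilde{A}_{\gamma_k}(\psi\circ V)^{1/s}(\overline{X}_{\Gamma_{k-1}})$ into a telescoping part (Abel transform, sign of the boundary term, domination of the bulk by the series in $\mathcal{S}\mathcal{W}_{\mathcal{II},\gamma,\eta}$ using $H_n\geqslant H_{k+1}$) and a martingale part treated by $\mathcal{GC}_{Q}$ together with $\mathcal{S}\mathcal{W}_{\mathcal{I},\gamma,\eta}$, Chow's theorem and Kronecker's lemma, is exactly the intended mechanism (and your worry about $\rho=1$ is moot: Theorem \ref{th:chow} as quoted includes $r=0$). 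For part \ref{th:tightness_point_B}, the transfer of the recursive control from $\psi\circ V$ to $(\psi\circ V)^{1/s}$ by Jensen plus the tangent-line inequality for $y\mapsto y^{1/s}$, followed by integration against $\nu_n^{\eta}$ and (\ref{eq:invariance_mes_emp_Lyap_gen}), and the final Markov-inequality tightness argument, also match the source.

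Two steps need tightening. First, your pointwise inequality is only available for $k\geqslant n_0$ (the $n_0$ of $\mathcal{RC}_{Q,V}$), so when you integrate against $\nu_n^{\eta}$ the finitely many terms $k<n_0$ must be absorbed into an a.s.\ finite random constant (harmless, but it should be said, since $M$ cannot be deterministic there). Second, and more substantively, your uniform bound on the region $\{V<y_1\}$ is not justified as written: boundedness of $\tilde{V}_{\psi,\phi,s}$ and of the upper estimate for $\widetilde{A}_{\gamma_n}(\psi\circ V)^{1/s}$ on that set requires $\psi$ to be bounded on $[v_{\ast},y_1]$, which does not follow from $\psi$ being Borel, from $\mbox{L}_V$ (relative compactness of $\{V<y_1\}$ does not bound $\psi\circ V$ when $\psi$ and $V$ are merely measurable), nor from the parenthetical inequality $(\psi\circ V)^{1/s-1}\geqslant v_{\ast}^{1/s-1}$, which is not correct as stated. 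You should either invoke local boundedness of $\psi$ (true in every application of the paper, e.g.\ $\psi_p(y)=y^p$, and tacitly needed by the source argument as well when $\beta>0$), or observe that for $\beta\leqslant 0$ no compact-region constant is needed at all since then $-\widetilde{A}_{\gamma_n}(\psi\circ V)^{1/s}\geqslant \frac{\alpha}{s}\tilde{V}_{\psi,\phi,s}$ globally, and for $\beta>0$ absorb the term $\frac{\beta}{s}\,\frac{(\psi\circ V)^{1/s}}{V}$ into the left-hand side on $\{V\geqslant y_1\}$ using $\phi\circ V\geqslant\theta>\beta/\alpha$ there, which still leaves the same compact-region term and hence the same local-boundedness requirement. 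With that hypothesis made explicit, your proof is complete.
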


\subsubsection{Identification of the limit}
In Theorem \ref{th:tightness}, the tightness - and then existence of a weak limiting distribution - of $(\nu_n^{\eta})_{n \in \mathbb{N}^{\ast}}$ is established. From Theorem \ref{th:identification_limit}, it follows that every limiting point of this sequence is an invariant distribution of the Feller process with infinitesimal generator $A$. 
\begin{mytheo}
\label{th:identification_limit}
Let $\rho \in [1,2]$. We have the following properties:
\begin{enumerate}[label=\textbf{\Alph*.}]
\item\label{th:identification_limit_A} 
Let $\DomA_0 \subset \DomA$, with  $\DomA_0$ dense in $\mathcal{C}_0(E)$. We assume that $\widetilde{A}_{\gamma_n}f$ exists for every $f \in \DomA_0$ and every $n \in \mathbb{N}^{\ast}$. Also assume that there exists $g:E \to \mathbb{R}_+$ a Borel function and $\epsilon_{\mathcal{I}} : \mathbb{R}_+ \to \mathbb{R}_+$ an increasing function such that $\mathcal{GC}_{Q}(\DomA_0,g,\rho ,\epsilon_{\mathcal{I}}) $ (see (\ref{hyp:incr_X_Lyapunov})) and $\mathcal{S}\mathcal{W}_{\mathcal{I}, \gamma,\eta}( g,\rho,\epsilon_{\mathcal{I}}) $ (see (\ref{hyp:step_weight_I_gen_chow})) hold and that
 \begin{equation}
 \label{hyp:accroiss_sw_series_2}  \lim\limits_{n \to + \infty} \frac{1}{H_n} \sum_{k =1}^{n} \vert \eta_{k+1}/\gamma_{k+1}-\eta_k /\gamma_k \vert = 0.
\end{equation} Then
\begin{equation}
\label{hyp:identification_limit}
\mathbb{P} \mbox{-a.s.} \quad \forall f \in \DomA_0, \qquad   \lim\limits_{n \to + \infty}  \frac{1}{H_n} \sum_{k=1}^n \eta_k \widetilde{A}_{\gamma_k}f (\overline{X}_{\Gamma_{k-1}})=0.
\end{equation}
\item \label{th:identification_limit_B} 
We assume that (\ref{hyp:identification_limit}) and $\mathcal{E}(\widetilde{A},A,\DomA_0) $ (see (\ref{hyp:erreur_tems_cours_fonction_test_reg})) hold. Then
\begin{align*}
\mathbb{P} \mbox{-a.s.} \quad \forall f \in \DomA_0, \qquad     \lim\limits_{n \to + \infty} \nu_n^{\eta}( Af )=0.
\end{align*}
It follows that, $\mathbb{P}-a.s.$, every weak limiting distribution $\nu^{\eta}_{\infty}$ of the sequence $(\nu_n^{\eta})_{n \in \mathbb{N}^{\ast}}$ belongs to $\mathcal{V}$, the set of the invariant distributions of $(X_t)_{t \geqslant 0}$. Finally, if the hypotheses from Theorem \ref{th:tightness} point \ref{th:tightness_point_B} hold and $(X_t)_{t \geqslant 0}$ has a unique invariant distribution, $i.e.$ $\mathcal{V}=\{\nu\}$, then 
\begin{align}
\label{eq:test_function_gen_cv}
\mathbb{P} \mbox{-a.s.} \quad \forall f \in \mathcal{C}_{\tilde{V}_{\psi,\phi,s}}(E), \quad \lim\limits_{n \to + \infty} \nu_n^{\eta}(f)=\nu(f),
\end{align}
 with $\mathcal{C}_{\tilde{V}_{\psi,\phi,s}}(E)$ defined in (\ref{def:espace_test_function_cv}).
\end{enumerate}
\end{mytheo}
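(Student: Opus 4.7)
The plan is to start from $\eta_k\widetilde{A}_{\gamma_k}f(\overline{X}_{\Gamma_{k-1}})=(\eta_k/\gamma_k)\bigl(\mathbb{E}[f(\overline{X}_{\Gamma_k})\mid\overline{X}_{\Gamma_{k-1}}]-f(\overline{X}_{\Gamma_{k-1}})\bigr)$ and to split the conditional expectation as $(f(\overline{X}_{\Gamma_k})-f(\overline{X}_{\Gamma_{k-1}}))-\Delta M_k$, with $\Delta M_k:=f(\overline{X}_{\Gamma_k})-\mathbb{E}[f(\overline{X}_{\Gamma_k})\mid\overline{X}_{\Gamma_{k-1}}]$. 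Summing and performing an Abel transformation on the first piece gives
\[
\sum_{k=1}^n\eta_k\widetilde{A}_{\gamma_k}f(\overline{X}_{\Gamma_{k-1}})=\tfrac{\eta_n}{\gamma_n}f(\overline{X}_{\Gamma_n})-\tfrac{\eta_1}{\gamma_1}f(\overline{X}_0)-\sum_{k=1}^{n-1}\bigl(\tfrac{\eta_{k+1}}{\gamma_{k+1}}-\tfrac{\eta_k}{\gamma_k}\bigr)f(\overline{X}_{\Gamma_k})-M_n,
\]
where $M_n:=\sum_{k=1}^n(\eta_k/\gamma_k)\Delta M_k$ is an $(\mathcal{F}^{\overline X}_n)$-martingale. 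Since $f\in\DomA_0\subset\mathcal{C}_0(E)$ is bounded, after division by $H_n$ the first three terms are dominated by $\|f\|_\infty$ times quantities that vanish under (\ref{hyp:accroiss_sw_series_2}); in particular the crude bound $\eta_n/\gamma_n\le\eta_1/\gamma_1+\sum_{k<n}|\eta_{k+1}/\gamma_{k+1}-\eta_k/\gamma_k|$ combined with (\ref{hyp:accroiss_sw_series_2}) yields $\eta_n/(\gamma_n H_n)\to 0$. For the martingale part, Chow's a.s. convergence theorem applied to $\sum_k(\eta_k/(\gamma_k H_k))\Delta M_k$, whose conditional $\rho$-moments are bounded via $\mathcal{GC}_Q(\DomA_0,g,\rho,\epsilon_\mathcal{I})$ (see (\ref{hyp:incr_X_Lyapunov})), reduces exactly to the summability $\mathcal{S}\mathcal{W}_{\mathcal{I},\gamma,\eta}(g,\rho,\epsilon_\mathcal{I})$ (see (\ref{hyp:step_weight_I_gen_chow})); Kronecker's lemma then gives $M_n/H_n\to 0$ a.s., establishing (\ref{hyp:identification_limit}).

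\textbf{Part B, error term.} From $\mathcal{E}(\widetilde{A},A,\DomA_0)$ (see (\ref{hyp:erreur_tems_cours_fonction_test_reg})),
\[
\bigl|\nu_n^\eta(Af)-\tfrac{1}{H_n}\sum_{k=1}^n\eta_k\widetilde{A}_{\gamma_k}f(\overline{X}_{\Gamma_{k-1}})\bigr|\le\tfrac{1}{H_n}\sum_{k=1}^n\eta_k\Lambda_f(\overline{X}_{\Gamma_{k-1}},\gamma_k),
\]
and Part A eliminates the minuend, so only the right-hand side remains to handle. Inserting $\Lambda_f(x,\gamma)=\langle g(x),\widetilde{\mathbb{E}}[\widetilde{\Lambda}_f(x,\gamma,\tilde\omega)]\rangle$ and applying Fubini (licensed by the integrability of $\sup\widetilde{\Lambda}_{f,i}$) reduces the question, componentwise and $\tilde{\mathbb{P}}$-a.s. in $\tilde\omega$, to showing $H_n^{-1}\sum_k\eta_k\widetilde{\Lambda}_{f,i}(\overline{X}_{\Gamma_{k-1}},\gamma_k,\tilde\omega)g_i(\overline{X}_{\Gamma_{k-1}})\to 0$. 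Under \ref{hyp:erreur_temps_cours_fonction_test_reg_Lambda_representation_cv_2}, the sequence $u_k:=\sup_{x\in E}\widetilde{\Lambda}_{f,i}(x,\gamma_k,\tilde\omega)g_i(x)$ tends to $0$ as $\gamma_k\downarrow 0$, and a Cesaro lemma with weights $(\eta_k)$ closes the argument. Under \ref{hyp:erreur_tems_cours_fonction_test_reg_Lambda_representation_cv_1}, given $\varepsilon>0$ one chooses $K\in\mathcal{K}_E$ with $\sup_{x\in K^c,\,\gamma\in(0,\underline\gamma(\tilde\omega)]}\widetilde{\Lambda}_{f,i}(x,\gamma,\tilde\omega)<\varepsilon$: the $K^c$-contribution is bounded by $\varepsilon\sup_n\nu_n^\eta(g_i)<\infty$, while on the compact $K$ local boundedness of $g_i$ combined with (i) and Cesaro drives the contribution to $0$. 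Letting $\varepsilon\downarrow 0$ and reintegrating with respect to $\widetilde{\mathbb{E}}$ delivers $\nu_n^\eta(Af)\to 0$ a.s. for every $f\in\DomA_0$.

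\textbf{Conclusion and main obstacle.} Any weak cluster point $\nu_\infty^\eta$ of $(\nu_n^\eta)$ then satisfies $\nu_\infty^\eta(Af)=0$ on $\DomA_0$, and, because $\DomA_0$ is dense in $\mathcal{C}_0(E)$, the Echeverria--Weiss characterization recalled in the introduction places $\nu_\infty^\eta\in\mathcal{V}$. When $\mathcal{V}=\{\nu\}$, the a.s. tightness established in Theorem~\ref{th:tightness}.\ref{th:tightness_point_B} forces $\nu_n^\eta\Rightarrow\nu$ a.s., and the extension (\ref{eq:test_function_gen_cv}) to $f\in\mathcal{C}_{\tilde{V}_{\psi,\phi,s}}(E)$ follows by the standard uniform-integrability argument: write $|f|\le\varepsilon\tilde{V}_{\psi,\phi,s}$ outside a compact $K_\varepsilon$ and exploit $\sup_n\nu_n^\eta(\tilde{V}_{\psi,\phi,s})<\infty$ from (\ref{eq:tightness_gen}). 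I expect the main obstacle to be the error-term analysis in Part B: the simultaneous control in the (possibly unbounded) state variable $x$ and in the auxiliary randomness $\tilde\omega$, together with the genuinely different handling required in the two regimes \ref{hyp:erreur_tems_cours_fonction_test_reg_Lambda_representation_cv_1}/\ref{hyp:erreur_temps_cours_fonction_test_reg_Lambda_representation_cv_2}, is the delicate point; the remainder is a careful but standard blend of Abel summation, Chow--Kronecker and Echeverria--Weiss.
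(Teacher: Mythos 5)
Your proposal is correct and takes essentially the same route as the paper: the Abel/martingale decomposition with Chow's theorem and Kronecker's lemma for part A, and for part B the representation of $\Lambda_f$, the split over a compact $K$ and its complement using $\underline{\gamma}(\tilde{\omega})$, Cesaro and dominated convergence in $\tilde{\omega}$, then Echeverria--Weiss and the uniform-integrability extension — this is exactly the machinery the paper itself displays in the proof of Theorem \ref{th:conv_gnl_ordre_2}. The only point to tighten is that your bound $\varepsilon \sup_n \nu_n^{\eta}(g_i)$ on the $K^c$-contribution is valid only for the indices $k$ with $\gamma_k \leqslant \underline{\gamma}(\tilde{\omega})$; the finitely many earlier terms must be treated separately and vanish after division by $H_n \to +\infty$, which is the role of the threshold $\underline{n}(\tilde{\omega})$ in the paper's argument.
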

In the particular case where the function $\psi$ is polynomial and $V$ is quadratic, notice that (\ref{eq:test_function_gen_cv}) implies the $a.s.$ convergence of the empirical measures for the $p$-Wasserstein distances for some $p>0$.

\subsection{About Growth control and Step Weight assumptions}
The following Lemma presents a $\mbox{L}_1$-finiteness property that we can obtain under recursive control hypotheses and strongly mean reverting assumptions ($\phi=I_d$). This result is thus useful to prove $\mathcal{S}\mathcal{W}_{\mathcal{I}, \gamma,\eta}(g, \rho , \epsilon_{\mathcal{I}})$ (see (\ref{hyp:step_weight_I_gen_chow})) or $\mathcal{S}\mathcal{W}_{\mathcal{II},\gamma,\eta}(F)  $ (see (\ref{hyp:step_weight_I_gen_tens})) for well chosen $F$ and $g$ in this specific situation.
\begin{lemme}
\label{lemme:mom_psi_V}
Let $v_{\ast}>0$, $V:E \to [v_{\ast},\infty) $, $\psi:[v_{\ast},\infty) \to \mathbb{R}_+, $ such that $\widetilde{A}_{\gamma_n}\psi \circ V$ exists for every $n \in \mathbb{N}^{\ast}$. Let $\alpha>0$ and $\beta \in \mathbb{R}$. We assume that $\mathcal{RC}_{Q,V}(\psi,I_d,\alpha,\beta)$ (see (\ref{hyp:incr_sg_Lyapunov})) holds and that $\mathbb{E}[\psi\circ V (\overline{X}_{\Gamma_{n_0}})]< + \infty$ for every $n_0 \in \mathbb{N}^{\ast}$. Then
\begin{align}
\label{eq:mom_psi_V}
\sup_{n \in \mathbb{N}} \mathbb{E}[\psi \circ V(\overline{X}_{\Gamma_n})] < + \infty
\end{align}
In particular, let $\rho \in [1,2]$ and $\epsilon_{\mathcal{I}} : \mathbb{R}_+ \to \mathbb{R}_+$, an increasing function. It follows that if $\sum_{n=1}^{\infty} \Big \vert \frac{\eta_n }{H_n \gamma_n } \Big \vert^{\rho} \epsilon_{\mathcal{I}}(\gamma_n)   < + \infty$, then $\mathcal{S}\mathcal{W}_{\mathcal{I}, \gamma,\eta}(\psi \circ V, \rho , \epsilon_{\mathcal{I}}) $ holds and if $ \sum_{n=0}^{\infty} \frac{(\eta_{n+1} /\gamma_{n+1}-\eta_n /\gamma_n)_+ }{H_{n+1} } < + \infty$, then $\mathcal{S}\mathcal{W}_{\mathcal{II},\gamma,\eta}(\psi \circ V) $ is satisfied.
\end{lemme}
%
%

Now, we provide a general way to obtain $\mathcal{S}\mathcal{W}_{\mathcal{I}, \gamma,\eta}(g,\rho, \epsilon_{\mathcal{I}})$ and $\mathcal{S}\mathcal{W}_{\mathcal{I}\mathcal{I},\gamma,\eta}(F)$  for some specific $g$ and $F$ as soon as a recursive control with weakly mean reversion assumption holds.
\begin{lemme}
\label{lemme:mom_V}
Let $v_{\ast}>0$, $V:E \to [v_{\ast},\infty) $, $\psi, \phi :[v_{\ast},\infty) \to \mathbb{R}_+, $ such that $\widetilde{A}_{\gamma_n}\psi \circ V$ exists for every $n \in \mathbb{N}^{\ast}$. Let $\alpha>0$ and $\beta \in \mathbb{R}$. We also introduce the non-increasing sequence $(\theta_n)_{n \in \mathbb{N}^{\ast}}$ such that $\sum_{n \geqslant 1} \theta_n \gamma_n < + \infty$. We assume that $\mathcal{RC}_{Q,V}(\psi,\phi,\alpha,\beta)$ (see (\ref{hyp:incr_sg_Lyapunov})) holds and that $\mathbb{E}[\psi\circ V (\overline{X}_{\Gamma_{n_0}})]< + \infty$ for every $n_0 \in \mathbb{N}^{\ast}$. Then
\begin{equation*}
\sum_{n=1}^{\infty} \theta_n \gamma_n \mathbb{E} [\tilde{V}_{\psi,\phi,1}(\overline{X}_{\Gamma_{n-1}}) ] < + \infty
\end{equation*}
 with $ \tilde{V}_{\psi,\phi,1}$ defined in (\ref{def:espace_test_function_cv}). In particular, let $\rho \in [1,2]$ and $\epsilon_{\mathcal{I}} : \mathbb{R}_+ \to \mathbb{R}_+$, an increasing function. If we also assume 
 \begin{align}
 \label{hyp:step_weight_I}
\mathcal{S}\mathcal{W}_{\mathcal{I}, \gamma,\eta}(\rho, \epsilon_{\mathcal{I}})  \quad \equiv \qquad & \Big( \gamma_n^{-1} \epsilon_{\mathcal{I}}(\gamma_n) \big( \frac{\eta_n }{H_n \gamma_n } \big)^{\rho} \Big)_{n \in \mathbb{N}^{\ast}} \mbox{ is non-increasing and } \nonumber \\
& \sum_{n=1}^{\infty} \Big( \frac{\eta_n }{H_n \gamma_n } \Big)^{\rho} \epsilon_{\mathcal{I}}(\gamma_n) < + \infty,
 \end{align}
 then we have $\mathcal{S}\mathcal{W}_{\mathcal{I}, \gamma,\eta}(\tilde{V}_{\psi,\phi,1},\rho,\epsilon_{\mathcal{I}}) $ (see (\ref{hyp:step_weight_I_gen_chow})). Finally,if
  \begin{align}
 \label{hyp:step_weight_II}
\mathcal{S}\mathcal{W}_{\mathcal{II},\gamma,\eta}  \quad \equiv \qquad &\Big( \frac{ \frac{\eta_{n+1} }{(\gamma_{n+1}}-\frac{\eta_n}{\gamma_n})_+ }{ \gamma_n H_n }  \Big)_{n \in \mathbb{N}^{\ast}} \mbox{ is non-increasing and } \nonumber \\
&\sum_{n=1}^{\infty}  \frac{(\eta_{n+1} /\gamma_{n+1}-\eta_n /\gamma_n)_+ }{H_n }  < + \infty,
 \end{align}
  then we have $\mathcal{S}\mathcal{W}_{\mathcal{II},\gamma,\eta}( \tilde{V}_{\psi,\phi,1}) $ (see (\ref{hyp:step_weight_I_gen_tens})).
\end{lemme}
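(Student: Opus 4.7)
All three conclusions follow from a single one-step estimate combined with a discrete Abel transform.

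\emph{Step 1: one-step inequality.} Fix $n\geqslant n_0$. Multiplying the inequality $\mathcal{RC}_{Q,V}(\psi,\phi,\alpha,\beta)(i)$ by $\gamma_n>0$, using the identity $\gamma_n\widetilde A_{\gamma_n}\psi\circ V(x)=\mathscr Q_{\gamma_n}\psi\circ V(x)-\psi\circ V(x)$ and isolating the positive term $\alpha\phi\circ V(x)\psi\circ V(x)/V(x)=\alpha\tilde V_{\psi,\phi,1}(x)$ on the left, one obtains
\begin{align*}
\alpha\gamma_n\,\tilde V_{\psi,\phi,1}(x)\;\leqslant\;\psi\circ V(x)-\mathscr Q_{\gamma_n}\psi\circ V(x)+\gamma_n\beta\,\frac{\psi\circ V(x)}{V(x)}.
\end{align*}
Evaluating at $x=\overline X_{\Gamma_{n-1}}$, taking expectations, and using $V\geqslant v_*$ gives, with $u_n:=\mathbb E[\psi\circ V(\overline X_{\Gamma_n})]$ (each $u_n$ finite by iteration of $\mathscr Q$ from the finiteness at $n_0$), the fundamental inequality
\begin{align*}
\alpha\gamma_n\,\mathbb E\bigl[\tilde V_{\psi,\phi,1}(\overline X_{\Gamma_{n-1}})\bigr]\;\leqslant\;u_{n-1}-u_n+\frac{|\beta|}{v_*}\gamma_n\,u_{n-1}.
\end{align*}

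\emph{Step 2: boundedness of $(u_n)$.} Condition (ii) of $\mathcal{RC}_{Q,V}$ supplies $y_0$ and $\phi_0>\beta/\alpha$ with $\phi\geqslant\phi_0$ on $[y_0,\infty)$. On $\{V\geqslant y_0\}$ the recursive control yields $\widetilde A_{\gamma_n}\psi\circ V\leqslant-(\alpha\phi_0-\beta)\psi\circ V/V$, a strictly dissipative regime, while on $\{V<y_0\}$ the right-hand side of $\mathcal{RC}_{Q,V}$ is bounded. A discrete Gronwall-type argument, in the spirit of Lemma~\ref{lemme:mom_psi_V}, then transfers the initial finiteness $u_{n_0}<+\infty$ into the uniform estimate $U:=\sup_{n\in\mathbb N}u_n<+\infty$.

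\emph{Step 3: Abel summation.} Let $(\theta_n)$ be any non-increasing positive sequence with $\sum_n\theta_n\gamma_n<+\infty$. Multiplying Step~1's inequality by $\theta_n$ and summing from $n_0$ to $N$, a summation by parts gives
\begin{align*}
\sum_{n=n_0}^N\theta_n(u_{n-1}-u_n)\;=\;\theta_{n_0}u_{n_0-1}-\theta_Nu_N+\sum_{n=n_0}^{N-1}(\theta_{n+1}-\theta_n)u_n\;\leqslant\;\theta_{n_0}u_{n_0-1},
\end{align*}
(using $\theta_{n+1}-\theta_n\leqslant 0$ and $u_n\geqslant 0$) so, bounding $u_{n-1}\leqslant U$ in the remaining term,
\begin{align*}
\alpha\sum_{n=n_0}^{\infty}\theta_n\gamma_n\,\mathbb E\bigl[\tilde V_{\psi,\phi,1}(\overline X_{\Gamma_{n-1}})\bigr]\;\leqslant\;\theta_{n_0}u_{n_0-1}+\frac{|\beta|U}{v_*}\sum_{n=n_0}^{\infty}\theta_n\gamma_n\;<\;+\infty.
\end{align*}
The finitely many initial terms $n<n_0$ are harmless since each $u_n<+\infty$, which yields the first announced conclusion.

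\emph{Step 4: applications to $\mathcal{SW}_{\mathcal I}$ and $\mathcal{SW}_{\mathcal{II}}$.} For the second claim, set $\theta_n:=\gamma_n^{-1}\epsilon_{\mathcal I}(\gamma_n)\bigl(\eta_n/(H_n\gamma_n)\bigr)^{\rho}$: by $\mathcal{SW}_{\mathcal I,\gamma,\eta}(\rho,\epsilon_{\mathcal I})$ this $\theta_n$ is non-increasing and $\sum\theta_n\gamma_n<+\infty$. Step~3 applies, and Fubini converts the expectation-bound into $\mathbb P$-a.s.\ summability of $\sum_n\bigl(\eta_n/(H_n\gamma_n)\bigr)^\rho\epsilon_{\mathcal I}(\gamma_n)\tilde V_{\psi,\phi,1}(\overline X_{\Gamma_{n-1}})$; a routine index shift, combined with the monotonicity of $(\theta_n\gamma_n)$, then turns $\overline X_{\Gamma_{n-1}}$ into $\overline X_{\Gamma_n}$, producing $\mathcal{SW}_{\mathcal I,\gamma,\eta}(\tilde V_{\psi,\phi,1},\rho,\epsilon_{\mathcal I})$. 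Analogously, for the third claim take $\theta_n:=(\eta_{n+1}/\gamma_{n+1}-\eta_n/\gamma_n)_{+}/(\gamma_n H_n)$: by $\mathcal{SW}_{\mathcal{II},\gamma,\eta}$ it is non-increasing with $\sum\theta_n\gamma_n<+\infty$, and the inequality $H_{n+1}\geqslant H_n$ absorbs the denominator appearing in $\mathcal{SW}_{\mathcal{II},\gamma,\eta}(\cdot)$, delivering $\mathcal{SW}_{\mathcal{II},\gamma,\eta}(\tilde V_{\psi,\phi,1})$.

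\emph{Main obstacle.} The only non-automatic step is the uniform moment bound $U=\sup_nu_n<+\infty$ in Step~2, which must be established under the \emph{weakly} mean-reverting regime where $\phi$ is only controlled at infinity. It rests on the dichotomy between $\{V\geqslant y_0\}$ (strict dissipativity from $\liminf\phi>\beta/\alpha$) and $\{V<y_0\}$ (boundedness of $\psi\circ V$ and $\phi\circ V$), combined with a discrete Gronwall argument. Once this uniform bound is secured, the remainder of the proof reduces to a single discrete integration by parts.
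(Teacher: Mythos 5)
There is a genuine gap, and it sits exactly where you flag the ``main obstacle'': Step 2. The uniform bound $U=\sup_n \mathbb{E}[\psi\circ V(\overline X_{\Gamma_n})]<+\infty$ is precisely what Lemma \ref{lemme:mom_psi_V} delivers in the \emph{strongly} mean-reverting case $\phi=I_d$, and it is not available under the hypotheses of this lemma. Your dichotomy at a level $y_0$ only gives, on $\{V\geqslant y_0\}$, a dissipation of size $c\,\phi\circ V\,\psi\circ V/V=c\,\tilde V_{\psi,\phi,1}$, which is of strictly lower order than $\psi\circ V$ when $\phi(y)/y\to 0$ (e.g.\ $\psi(y)=y^p$, $\phi(y)=y^a$, $a<1$, where the negative drift is $\sim V^{p+a-1}\ll V^p$). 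A discrete Gronwall argument then only yields $u_n\leqslant u_{n_0}+C(\Gamma_n-\Gamma_{n_0})$, i.e.\ possibly linear growth in $\Gamma_n$, not boundedness; one can in fact build kernels satisfying $\mathcal{RC}_{Q,V}(\psi,\phi,\alpha,\beta)$ (an \emph{upper} bound on $\mathscr Q_{\gamma}\psi\circ V$) for which $u_n\to+\infty$, by repeatedly sending a tiny probability mass to huge values of $V$ (this inflates $\mathbb{E}[\psi\circ V]$ while barely affecting the drift constraint, since the penalty there is only of order $\psi\circ V\,\phi\circ V/V$). Since your Step 3 bounds the $\beta$-term crudely by $\frac{|\beta|}{v_*}\gamma_n u_{n-1}$ and then needs $\sum_n\theta_n\gamma_n u_{n-1}<\infty$, the whole argument collapses without $U<\infty$.

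The repair is to never invoke a uniform $\psi\circ V$-moment: use hypothesis (ii) of (\ref{hyp:incr_sg_Lyapunov}) together with local boundedness on the compact level set $\{V\leqslant y_0\}$ to absorb the $\beta$-term into the dissipative one, namely
\begin{equation*}
\frac{\psi\circ V(x)}{V(x)}\bigl(\beta-\alpha\,\phi\circ V(x)\bigr)\;\leqslant\;-\,c\,\tilde V_{\psi,\phi,1}(x)+C,\qquad x\in E,
\end{equation*}
for some $c>0$, $C\geqslant 0$ (strict dissipativity for $V\geqslant y_0$ because $\liminf\phi>\beta/\alpha$; boundedness of both sides for $V<y_0$). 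Then the one-step inequality reads $c\,\gamma_n\,\mathbb{E}[\tilde V_{\psi,\phi,1}(\overline X_{\Gamma_{n-1}})]\leqslant u_{n-1}-u_n+C\gamma_n$, and your Abel summation with $(\theta_n)$ non-increasing and $u_n\geqslant 0$ closes using only $u_{n_0}<+\infty$ (each $u_n$ is finite by hypothesis) and $\sum_n\theta_n\gamma_n<+\infty$ — no uniform bound is needed; this is the intended route and the reason this lemma, unlike Lemma \ref{lemme:mom_psi_V}, covers the weakly mean-reverting setting. A secondary point: in Step 4 the passage from the bound at $\overline X_{\Gamma_{n-1}}$ (weights $\theta_n\gamma_n$) to the sums at $\overline X_{\Gamma_n}$ required by (\ref{hyp:step_weight_I_gen_chow}) is not a ``routine index shift'' — the shifted weight sequence involves the ratio $\gamma_{n-1}/\gamma_n$ and its monotonicity/summability must be checked (this is what the monotonicity requirements in (\ref{hyp:step_weight_I}) and (\ref{hyp:step_weight_II}) are for), so spell that step out once the main estimate is fixed.
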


\section{Rate of convergence - A general approach}
\label{Rate of convergence - A general approach}
In this section, we extend the abstract framework from Section \ref{section:convergence_inv_distrib_gnl} to be adapted to establish an ergodic CLT for the empirical measures (\ref{eq:def_weight_emp_meas}) with the highest possible order. The approach we propose is twofold. First we give appropriate weak error type estimations and on second we give suitable step weight assumptions to control the martingale part of increments of functions of the approximation $(\overline{X}_{\Gamma_n})_{n \in \mathbb{N}^{\ast}}$. Notice that, together with the choice of weights $(\eta_{n})_{n \in \mathbb{N}^{\ast}}$, the weak error type estimations are the crucial tool to obtain high order of convergence in the CLT satisfied by the weighted empirical measures (\ref{eq:def_weight_emp_meas}).

\subsection{Assumption on the random measures}
\paragraph{Weak approximation assumption \\}

In this section, our purpose is to propose a new version of $\mathcal{E}(\widetilde{A},A,\DomA_0)$ (see (\ref{hyp:erreur_tems_cours_fonction_test_reg}) in Section \ref{section:convergence_inv_distrib_gnl}) which is adapted to obtain the CLT with order $q \in \mathbb{N}^{\ast}$  $i.e.$ Theorem \ref{th:conv_gnl_ordre_q}. \\
We begin by giving an intuition of this new version of (\ref{hyp:erreur_tems_cours_fonction_test_reg}).  In order to obtain the CLT, even in the case $q=1$, we need a sharper estimate of the error $\widetilde{A}_{\gamma}-A$ than the one provided by (\ref{hyp:erreur_tems_cours_fonction_test_reg}). In particular, we need to identify the dominating term of $\widetilde{A}_{\gamma}-A$ when $\gamma$ is close to $0$ and impose a similar assumption as (\ref{hyp:erreur_tems_cours_fonction_test_reg}) but with $\widetilde{A}_{\gamma}$ replaced by $\widetilde{A}_{\gamma}-A$ and $A$ replaced by the dominating term of $\widetilde{A}_{\gamma}-A$. This approach is well adapted when $q=1$ and for higher values of $q$, the intuition is similar but assumptions are made on the dominating term of $\widetilde{A}_{\gamma}-\sum_{i=1}^{q} \frac{\gamma^i}{i !}A^{i}$.  \\

We now describe rigorously the framework and the hypotheses we need to impose on the dominating term of $\widetilde{A}_{\gamma}-A$  or its higher order counterpart. \\
This new version of (\ref{hyp:erreur_tems_cours_fonction_test_reg}) is expected to act on a more restricted set of test functions so we introduce $F \subset \{f,f:(E, \mathcal{B}(E)) \to (\mathbb{R}, \mathcal{B}(\mathbb{R}) ) \}$ to replace $\DomA_0$.  \\
Moreover, in order to describe the behavior of the dominating term of $\widetilde{A}_{\gamma}-\sum_{i=1}^{q} \frac{\gamma^i}{i !}A^{i}$, let us introduce $\mathfrak{M}_q$, a linear operator acting on $F$, and let us introduce $\tilde{\eta}_{q}:\mathbb{R}_+ \times\{0,\ldots,(q-2)_+\} \to \mathbb{R}_+$ (with notation $b_+=b \vee 0$ for $b \in \mathbb{R}$).  \\
Notice that the CLT we obtain in Theorem \ref{th:conv_gnl_ordre_q} is restricted to test function $Af$ with $f \in F$. Moreover, the rate of the CLT we obtain is monitored by $\tilde{\eta}_{q}$ while the asymptotic mean is built with $\mathfrak{M}_q$.\\

We are now in a position to introduce the new version of (\ref{hyp:erreur_tems_cours_fonction_test_reg}). We assume that the weight sequence $(\tilde{\eta}_{q,n})_{n \in \mathbb{N}^{\ast}}=(\tilde{\eta}_{q}(\gamma_n, n \mod (q-1)))_{n \in \mathbb{N}^{\ast}}$ (with convention $n \mod 0=0$) is decreasing, satisfies (\ref{eq:weight_def}), is such that $\mathbb{P}-a.s.$, $\lim_{n \to \infty}\nu ^{\tilde{\eta}_q}_n(\mathfrak{M}_q f) = \nu(\mathfrak{M}_q f)$ for every $f \in F$ and
 \begin{align}
  \label{hyp:rate_erreur_tems_cours_fonction_test_reg}
\mathcal{E}_q(F,\tilde{A},A, \mathfrak{M}_{q},\tilde{\eta}_q)& \quad  \equiv \quad  \forall f \in F, \forall x \in E, \forall  \gamma \in (0, \overline{\gamma}], \forall e \in \{0,\ldots,(q-2)_+\},\\
&   \Big\vert \mathcal{R}_{q} f (x,\gamma,e) - \tilde{\eta}_q(\gamma,e) \mathfrak{M}_qf(x) \Big \vert \leqslant \tilde{\eta}_q(\gamma,e)\Lambda_{f,q}(x,\gamma), \nonumber
\end{align}

where $\mathcal{R}_qf$ is defined in the following way. \\

We fix the weights of the order one and two discretization, $c^{1}=1$, $c^{2}=(\frac{1}{2},\frac{1}{2})$. When $q > 2$, we consider $\mathscr{C}(q) \in \mathbb{N}$, $\mathscr{C}(q) \geqslant q$ with $\mathscr{C}(1)=1$ and $\lambda=(\lambda^{1},\ldots,\lambda^{\mathscr{C}(q)-2}) \in  \mathbb{R}^{\mathscr{C}(q)-2} \times \mathbb{R}^{\mathscr{C}(q)-3} \times \ldots \times \mathbb{R}$ and for $p \in \mathbb{N}^{\ast}$, $2 \leqslant p \leqslant q$, the quantity
$\zeta^{q,p}(\lambda) =(\zeta^{q,p}_{0}(\lambda),\ldots,\zeta^{q,p}_{p-1}(\lambda))\in \mathbb{R}^{\mathscr{C}(q)-\mathscr{C}(q-p+1)+1}$. We assume that $\zeta ^{q,q}(\lambda) $ satisfies
\begin{align*}
\zeta_{u}^{q,q}(\lambda) = \frac{1}{q!}\mathds{1}_{u<\mathscr{C}(q)-1}+\sum_{i=0}^{\mathscr{C}(q)-3}\sum_{l=1}^{\mathscr{C}(q)-2-i} \frac{\sum_{h=l}^{\mathscr{C}(q)-2-i}\lambda^{h}_{i}}{(q+l)!} (-1)^{l-u+i}\binom{l}{u-i} \mathds{1}_{0 \leqslant u-i \leqslant l},
\end{align*}

for $u \in \{0,\ldots,q-1\}$. Then, we define
\begin{align*}
\boldsymbol{\lambda}^{q} := \{  \lambda \in  \mathbb{R}^{\mathscr{C}(q)-2}  \times \ldots \times \mathbb{R} , 
\forall p=3,\ldots,q, \exists \zeta^{q,p-1}(\lambda)  \in \mathbb{R}^{p-1} , M^{q,p}\zeta^{q,p-1}(\lambda)=N^{q,p}(\zeta^{q,p}(\lambda)) \} 
\end{align*}

where
\begin{align*}
N^{q,p}:\mathbb{R}^{\mathscr{C}(q)-\mathscr{C}(q-p+1)+1} & \to \mathbb{R}^{\mathscr{C}(q)} \\
\zeta &\mapsto   \frac{1}{(p-1) !} \mathds{1}_{e < \mathscr{C}(q)-1 } +\zeta_{e-(\mathscr{C}(q-p+1)-2)_+-1}^{q,p}(\lambda) \mathds{1}_{\mathscr{C}(q)-\mathscr{C}(q-p+1)+1  \geqslant e-(\mathscr{C}(q-p+1)-2)_+  \geqslant 1} \\
 & \qquad -\zeta_{e}^{q,p}(\lambda) \mathds{1}_{e < (\mathscr{C}(q)-\mathscr{C}(q-p+1)+1) \wedge (\mathscr{C}(q)-1) }
\end{align*}

 and $M^{q,p}\in \mathbb{R}^{\mathscr{C}(q) \times \mathscr{C}(q-p+1)}$ is defined by $M^{q,p}_{j,u}=   c_{j-u}^{q-p+2} \mathds{1}_{j-u \in \{ 0,\ldots, \mathscr{C}(q-p+2)-1\} }$. 
We then choose arbitrarily $\mathscr{C}(q) \geqslant q$ and $\hat{ \lambda} \in \boldsymbol{\lambda}^{q} $ as soon as it is not empty. Notice that for $q \leqslant 3$, it is sufficient to take $\mathscr{C}(q)=q$ and $\lambda=0$.
The weights for the $q$ order discretization are then given by
\begin{align}
\label{def:coeff_c}
c_{j}^q=& \mathds{1}_{j \leqslant \mathscr{C}(q)-2} + \sum_{u=0}^{\mathscr{C}(q)-\mathscr{C}(q-1)} \zeta^{q,2}_{u}(\hat{\lambda}) ( \mathds{1}_{j =(\mathscr{C}(q-1)-2)_{+}+u+1} - \mathds{1}_{j=u} ) ,
\end{align}

for every $j \in \{0,\ldots,\mathscr{C}(q)-1\}$. Moreover, for every $x,y \in E$, $\gamma \geqslant 0$, $e \in \{0,(\mathscr{C}(q)-2)_+\}$, we let

\begin{align*}
\mathcal{R}_1f(x,\gamma,e):=&\mathcal{R}_1f(x,\gamma):= -\tilde{\mathcal{R}}_{1} f(x,\gamma),  \quad \mbox{and for }q \in \mathbb{N}^{\ast},\\ 
\mathcal{R}_qf(x,\gamma,e):= & -(1-\sum_{l=1}^{\mathscr{C}(q)-2-e} \hat{\lambda}^{l}_{e})  \tilde{\mathcal{R}}_{q} f(x,\gamma)  - \sum_{l=1}^{\mathscr{C}(q)-2-e}  \hat{\lambda}^{l}_{e}  \tilde{\mathcal{R}}_{q+l} f(x,\gamma)   \\
&-\sum_{k=0}^{\mathscr{C}(q)-2}\sum_{l=1}^{\mathscr{C}(q)-2-k} \frac{\sum_{h=l}^{\mathscr{C}(q)-2-k}\hat{\lambda}^{h}_{k}}{(q+l)!}  \sum_{i=0}^{l-1} (-1)^{l-i-e+k-1}\binom{l-i-1}{e-k} \gamma_{n_{q}+1}^{q+i} \mathcal{R}_{1}A^{q+i}f(x,\gamma) \mathds{1}_{0 \leqslant e-k \leqslant l-i }\\
&  -\sum_{l=1}^{q-2} \gamma^{q-l} \sum_{u=(e-(\mathscr{C}(l)-2)_+)_{+}}^{e \wedge (\mathscr{C}(q)-\mathscr{C}(l))}  \zeta_u^{q,q-l+1}(\hat{\lambda})   \mathcal{R}_{l}A^{q-l}f(x,\gamma,e-u ) \\
& -\gamma \sum_{u=0}^{\mathscr{C}(q)-\mathscr{C}(q-1)} \zeta_u^{q,2}(\hat{\lambda})   \mathcal{R}_{q-1} Af(x,\gamma,e-u) \mathds{1}_{u \leqslant e \leqslant (q-3)_++u} .
\end{align*}

where, for every $m \in \{1,\ldots,q\}$, the measurable functions
\begin{align*}
\begin{array}{crcl}
\tilde{\mathcal{R}}_{m}f  & : E\times \mathbb{R}_+  & \to &  \mathbb{R} \\
 &(x, \gamma ) & \mapsto &   \gamma \tilde{A}_{\gamma}f(x)-\sum_{i=1}^{m} \frac{\gamma^i}{i !}A^{i} f(x),
\end{array}
\end{align*}
are supposed to be well defined for every $f \in F$.\\

In addition, we also assume that $\Lambda_{f,q}:E \times \mathbb{R}_+ \to \mathbb{R}_+$ can be represented in the following way:  Let $(\tilde{\Omega},\tilde{\mathcal{G}},\tilde{\mathbb{P}})$ be a probability space. Let $g :E\to \mathbb{R}_+^{l}$, $l \in \mathbb{N}^{\ast}$, be a locally bounded Borel measurable function and let $\tilde{\Lambda}_{f,q}:(E\times \mathbb{R}_+ \times \tilde{\Omega}, \mathcal{B}(E) \otimes \mathcal{B}(\mathbb{R}_+) \otimes \tilde{\mathcal{G}}) \to \mathbb{R}_+^{l}$ be a measurable function such that  
\begin{align}
\label{eq:domination_erreur_L1}
\sup_{i \in \{1,\ldots,l\} } \tilde{\mathbb{E}}[ \sup_{x \in E} \sup_{\gamma \in (0,\overline{\gamma}] } \tilde{\Lambda}_{f,q,i}(x,\gamma, \tilde{\omega}) ]< + \infty
\end{align}
 and that the following representation assumption holds
%
%
%
\begin{align*}
\forall x \in E , \forall \gamma \in (0,\overline{\gamma}], \qquad \Lambda_{f,q}(x,\gamma)= \langle g (  x ) ,\tilde{\mathbb{E}} [\tilde{\Lambda}_{f,q}(x,\gamma, \tilde{\omega})]  \rangle_{\mathbb{R}^l}.
\end{align*}
 Moreover, we assume that for every $i \in \{1,\ldots,l \}$, $\sup_{n \in \mathbb{N}^{\ast}} \nu_n^{ \tilde{\eta}_q }( g_i ,\omega )< + \infty, \; \mathbb{P}-a.s.$, and that $\tilde{\Lambda}_{f,q,i}$ satisfies one of the two following properties.\\
There exists a measurable function $\underline{\gamma}:(\tilde{\Omega}, \tilde{\mathcal{G}}) \to((0, \overline{\gamma}],\mathcal{B}((0, \overline{\gamma}]) )$ such that:
\begin{enumerate}[label=\textbf{\Roman*)}]
\item \label{hyp:erreur_tems_cours_fonction_test_reg_Lambda_representation_1} 
\inlineequation[hyp:erreur_temps_cours_fonction_test_reg_Lambda_representation_2_1]{
 \tilde{\mathbb{P}}(d\tilde{\omega})-a.s \qquad \left\{
    \begin{array}{l}
  (i)   \quad \; \;  \forall K \in \mathcal{K}_E ,   \quad  \lim\limits_{\gamma \to 0} \sup\limits_{x \in K} \tilde{\Lambda}_{f,q,i}(x, \gamma,\tilde{\omega})=0, \\
  (ii) \quad     \lim\limits_{\vert x \vert \to \infty}  \sup\limits_{\gamma \in (0,\underline{\gamma}(\tilde{\omega}) ]} \tilde{\Lambda}_{f,q,i}(x, \gamma,\tilde{\omega})=0,  \qquad \qquad \qquad \qquad \qquad \quad
    \end{array}
\right.
}
 or
\item \label{hyp:erreur_temps_cours_fonction_test_reg_Lambda_representation_2}\inlineequation[hyp:erreur_temps_cours_fonction_test_reg_Lambda_representation_2_2]{
 \tilde{\mathbb{P}}(d\tilde{\omega})-a.s \qquad  \lim\limits_{\gamma \to 0} \sup\limits_{x \in E} \tilde{\Lambda}_{f,q,i}(x, \gamma,\tilde{\omega}) g_i(x) =0  . \qquad \qquad \qquad \qquad \qquad \qquad \qquad \qquad \; \;}
\end{enumerate}
\begin{remark}
\label{rmk:representation_mesure_infinie_rate}
Let $(F,\mathcal{F},\lambda)$ be a measurable space. Using the exact same approach, the results we obtain hold when we replace the probability space $(\tilde{\Omega},\tilde{\mathcal{G}},\tilde{\mathbb{P}})$ by the product measurable space $(\tilde{\Omega} \times F,\tilde{\mathcal{G}} \otimes \mathcal{F},\tilde{\mathbb{P}}\otimes \lambda)$ in the representation of $\Lambda_{f,q}$ and in (\ref{hyp:erreur_temps_cours_fonction_test_reg_Lambda_representation_2_1}) and (\ref{hyp:erreur_temps_cours_fonction_test_reg_Lambda_representation_2_2}). It is a similar observation as in the study of the convergence as pointed out in Remark \ref{rmk:representation_mesure_infinie}.
\end{remark}

\paragraph{Growth assumption \\}

In this section we introduce a new version of $\mathcal{GC}_{Q}$ (see (\ref{hyp:incr_X_Lyapunov})) adapted to prove CLT of order $q \in \mathbb{N}^{\ast}$. In particular, this new version is crucial to identify the asymptotic variance in the CLT we obtain in Theorem \ref{th:conv_gnl_ordre_q}.\\

We denote by $\mathcal{P}_{\overline{X},2}$ the set of  $\mathcal{F}^{\overline{X}}_n:=\sigma(\overline{X}_{\Gamma_k},k \leqslant n)$- progressively measurable processes $(\mathscr{X}_n)_{n \in \mathbb{N}^{\ast}}$ with $\mathscr{X}_{n+1} \in \mbox{L}^2(\mathcal{F}^{\overline{X}}_n)$ and $\mathbb{E}[\mathscr{X}_{n+1} \vert \overline{X}_{\Gamma_n}]=0 $ for every $n \in \mathbb{N}$. Let $\rho \in [1,2]$ and let $\epsilon_{\mathfrak{X}}, \epsilon_{\mathcal{G}\mathcal{C}} : \mathbb{R}_+ \to \mathbb{R}_+$ be two increasing functions such that the weight sequence $(\epsilon_{\mathfrak{X},n})_{n \in \mathbb{N}^{\ast}}=(\epsilon_{\mathfrak{X}}(\gamma_n))_{n \in \mathbb{N}^{\ast}}$ satisfies (\ref{eq:weight_def}). Let $F \subset \{f,f:(E, \mathcal{B}(E)) \to (\mathbb{R}, \mathcal{B}(\mathbb{R}) ) \}$ and $g:E \to \mathbb{R}_+$ be a Borel measurable function. Finally, we introduce a linear operator $\mathfrak{V}$ defined on $F$ that is the main ingredient to compute the asymptotic variance in Theorem \ref{th:conv_gnl_ordre_q}. In particular, we identify the asymptotic variance in the CLT as the $a.s.$ limit of weighted (with weights $(\epsilon_{\mathfrak{X},n})_{n \in \mathbb{N}^{\ast}}$) empirical measures (\ref{eq:def_weight_emp_meas}) applied to functions taking form $\mathfrak{V}f$, $f \in F$.\\

We are now in a position to introduce the new version of (\ref{hyp:incr_X_Lyapunov}). We assume that $A^m f$ is well defined for every $m \in \{0,\ldots,q-1\}$ and every $f \in F$ and that

 \begin{align}
\label{hyp:incr_X_Lyapunov_vitesse}
&\mathcal{GC}_{Q,q}   (F,g,\rho,\epsilon_{\mathscr{X}},  \epsilon_{\mathcal{G}\mathcal{C}},  \mathfrak{V})  \quad \equiv \qquad   \mathbb{P}-a.s. \quad  \forall f \in F, \exists \mathscr{X}_{f} \in \mathcal{P}_{\overline{X},2}  \nonumber \\
 & \mathbb{E}\Big[\Big \vert  B_{q}f(\overline{X}_{\Gamma_n} ,\overline{X}_{\Gamma_{n+1}} ,\gamma_{n+1}, n \mbox{\; mod \;} (q-1)) - \mathscr{X}_{f,n+1}  \Big\vert^{\rho} \Big\vert \overline{X}_{\Gamma_n} \Big]  \leqslant    C_{f}\epsilon_{\mathcal{G}\mathcal{C}}(\gamma_{n+1})  g (\overline{X}_{\Gamma_n}) .
\end{align}

with, for every $x,y \in E$, $\gamma \geqslant 0$, $e \in \{0,(\mathscr{C}(q)-2)_+\}$,
\begin{align*}
B_{1} &f(x,y,\gamma,e):=B_{1}f(x,y,\gamma) :=  \mathscr{Q}_{\gamma}f(x) - f(y),  \quad \mbox{and for }q \in \mathbb{N}^{\ast},\\
 B_{q} & f(x,y,\gamma,e) :=  B_1 f (x,y,\gamma) \nonumber \\
 &-\sum_{k=0}^{\mathscr{C}(q)-2}\sum_{l=1}^{\mathscr{C}(q)-2-k} \frac{\sum_{h=l}^{\mathscr{C}(q)-2-k}\hat{\lambda}^{h}_{k}}{(q+l)!}  \sum_{i=0}^{l-1} (-1)^{l-i-e+k-1}\binom{l-i-1}{e-k} \gamma_{n_{q}+1}^{q+i} B_{1}A^{q+i}f(x,y,\gamma) \mathds{1}_{0 \leqslant e-k \leqslant l-i\}}\\
 &-\sum_{l=1}^{q-2} \gamma^{q-l} \sum_{u=(e-(\mathscr{C}(q)-2)_+)_{+}}^{e \wedge (\mathscr{C}(q)-\mathscr{C}(l))}  \zeta_u^{q,q-l+1}(\hat{\lambda})  B_{l }A^{q-l}f(x,y,\gamma,e-u ) \\
& -\gamma \sum_{u=0}^{\mathscr{C}(q)-\mathscr{C}(q-1)} \zeta_u^{q,2}(\hat{\lambda})   B_{q-1} Af(x,y,\gamma,e-u) \mathds{1}_{u \leqslant e \leqslant (q-3)_++u},
\end{align*}


and $\mathbb{E}[\vert \mathscr{X}_{f,n+1}\vert^2 \vert \overline{X}_{\Gamma_n} ]= \epsilon_{\mathscr{X}}(\gamma_{n+1})\mathfrak{V}f(\overline{X}_{\Gamma_n} )$ with for every $f \in F$, $\lim_{n \in \mathbb{N}^{\ast}} \nu_n^{\epsilon_{\mathscr{X}}}( \mathfrak{V}f ,\omega )= \nu (\mathfrak{V}f ), \; \mathbb{P}-a.s.$, and

%
%
%

\begin{align}
\label{hyp:Lindeberg_CLT_scheme}
\forall \mathfrak{E} >0 , \quad  \lim_{n \to \infty}  \frac{ 1}{H_{\epsilon_{\mathscr{X}},n}}  \sum_{k=0}^{n-1}\mathbb{E}[  \vert \mathscr{X}_{f,k+1} \vert^2\mathds{1}_{ \vert   \mathscr{X}_{f,k+1} \vert > \sqrt{H_{\epsilon_{\mathscr{X}},n}} \mathfrak{E} } \vert \overline{X}_{\Gamma_{k}}] \overset{\mathbb{P}}{=} 0.
\end{align}

\begin{remark}\label{rmrk:Accroiss_mes_vitesse} The reader may notice that $\mathcal{GC}_{Q,q}(F,q,\rho,\epsilon_{\mathscr{X}},\epsilon_{\mathcal{G}\mathcal{C}},\mathfrak{V}) $ holds as soon as (\ref{hyp:incr_X_Lyapunov}) is satisfied with $\mathscr{Q}_{\gamma_{n+1}}A^mf(\overline{X}_{\Gamma_n})$, $n  \in \mathbb{N}^{\ast} $, $m  \in \mathbb{N}^{\ast} $ replaced by a $\mathcal{F}^{\overline{X}}_n:=\sigma(\overline{X}_{\Gamma_k},k \leqslant n)$- progressively  measurable process $(\mathfrak{X}_{m,n})_{n \in \mathbb{N}^{\ast}}$, since $\rho \in [1,2]$ and we have $\mathscr{Q}_{\gamma_{n+1}}A^mf(\overline{X}_{\Gamma_n}) =\mathbb{E}[A^mf(\overline{X}_{\Gamma_{n+1}}) \vert \overline{X}_{\Gamma_n}]$ and $\mathbb{E}[\mathscr{X}_{f,n+1} \vert \overline{X}_{\Gamma_n}]=0$.
\end{remark}


In the following, we will combine this assumption with 
\begin{equation}
 \label{hyp:step_weight_I_gen_chow_rate}
\mathcal{S}\mathcal{W}_{\mathcal{G}\mathcal{C}, \gamma}(g, \rho , \epsilon_{\mathscr{X}},\epsilon_{\mathcal{G}\mathcal{C}})  \quad \equiv \qquad \quad  \mathbb{P}-a.s. \quad   \sum_{n=1}^{\infty} \frac{\epsilon_{\mathcal{G}\mathcal{C}}(\gamma_n)    }{H_{\epsilon_{\mathscr{X}},n} ^{\rho/2} }  g(\overline{X}_{\Gamma_n})  < + \infty.
\end{equation}

Notice that, as a consequence of Lemma \ref{lemme:mom_V}, if we suppose that  $\mathcal{RC}_{Q,V}(\psi,\phi,\alpha,\beta)$ (see (\ref{hyp:incr_sg_Lyapunov})) holds, that $\mathbb{E}[\psi\circ V (\overline{X}_{\Gamma_{n_0}})]< + \infty$ for every $n_0 \in \mathbb{N}^{\ast}$ and that

 \begin{equation}
 \label{hyp:step_weight_I_gen_chow_rate_sans_g}
\mathcal{S}\mathcal{W}_{\mathcal{G}\mathcal{C}, \gamma}( \rho , \epsilon_{\mathscr{X}},\epsilon_{\mathcal{G}\mathcal{C}})  \quad \equiv  \qquad \Big(\frac{\epsilon_{\mathcal{G}\mathcal{C}}(\gamma_n)    }{\gamma_n H_{\epsilon_{\mathscr{X}},n} ^{\rho/2} } \Big)_{n \in \mathbb{N}^{\ast}} \mbox{ is non increasing and } \sum_{n=1}^{\infty} \frac{\epsilon_{\mathcal{G}\mathcal{C}}(\gamma_n)    }{H_{\epsilon_{\mathscr{X}},n} ^{\rho/2} }< + \infty,
 \end{equation}
holds, then we have $\mathcal{S}\mathcal{W}_{\mathcal{G}\mathcal{C}, \gamma}(\tilde{V}_{\psi,\phi,1}, \rho , \epsilon_{\mathscr{X}},\epsilon_{\mathcal{G}\mathcal{C}}) $ (see (\ref{hyp:step_weight_I_gen_chow_rate})) with $\tilde{V}_{\psi,\phi,1}$ defined in (\ref{def:espace_test_function_cv}).

\subsection{Convergence rate results}
 We begin with some preliminary results.
\begin{lemme}{\textbf{(Kronecker).}}
\label{lemme:Kronecker}
 Let $(a_n)_{n \in \mathbb{N}^{\ast}}$ and $(b_n)_{n \in \mathbb{N}^{\ast}}$ be two sequences of real numbers. If $(b_n)_{n \in \mathbb{N}^{\ast}}$ is non-decreasing, strictly positive, with $\lim\limits_{n \to + \infty}b_n=+\infty$ and $\sum\limits_{n \geqslant 1} a_n /b_n$ converges in $\mathbb{R}$, then
 \begin{equation*}
 \lim\limits_{n \to +\infty} \frac{1}{b_n} \sum_{k=1}^n a_k=0.
 \end{equation*}
\end{lemme}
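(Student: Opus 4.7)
The plan is to prove this by Abel summation (summation by parts), which reduces the claim to a Toeplitz/Cesàro-type averaging statement applied to the convergent sequence of partial sums. This is the cleanest route for a result of this form.

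First I would set $S_0 = 0$ and $S_n = \sum_{k=1}^n a_k / b_k$ for $n \geqslant 1$, so that by assumption $S_n \to S$ for some $S \in \mathbb{R}$. Writing $a_k = b_k(S_k - S_{k-1})$ and applying summation by parts yields
\begin{equation*}
\sum_{k=1}^n a_k \;=\; \sum_{k=1}^n b_k (S_k - S_{k-1}) \;=\; b_n S_n - \sum_{k=1}^{n-1} (b_{k+1} - b_k)\, S_k.
\end{equation*}
Dividing by $b_n > 0$, the claim $\tfrac{1}{b_n}\sum_{k=1}^n a_k \to 0$ is equivalent to showing that the weighted average
\begin{equation*}
T_n \;:=\; \frac{1}{b_n} \sum_{k=1}^{n-1} (b_{k+1} - b_k)\, S_k
\end{equation*}
converges to $S$, since $S_n \to S$.

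The weights $w_{n,k} := (b_{k+1} - b_k)/b_n$ are nonnegative (monotonicity of $(b_n)$), and they satisfy $\sum_{k=1}^{n-1} w_{n,k} = (b_n - b_1)/b_n \to 1$. Given $\varepsilon > 0$, I would pick $N$ such that $|S_k - S| < \varepsilon$ for all $k \geqslant N$, and split
\begin{equation*}
T_n - S \cdot \frac{b_n - b_1}{b_n} \;=\; \frac{1}{b_n} \sum_{k=1}^{N-1}(b_{k+1}-b_k)(S_k - S) \;+\; \frac{1}{b_n} \sum_{k=N}^{n-1}(b_{k+1}-b_k)(S_k - S).
\end{equation*}
The first (finite) sum is bounded by a constant depending only on $N$, hence vanishes as $n \to \infty$ since $b_n \to +\infty$. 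The second is bounded in absolute value by $\varepsilon (b_n - b_N)/b_n \leqslant \varepsilon$. Combining these with $(b_n - b_1)/b_n \to 1$ gives $\limsup_n |T_n - S| \leqslant \varepsilon$, and letting $\varepsilon \to 0$ yields $T_n \to S$.

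There is no real obstacle here: the only subtlety is organising the Toeplitz-type estimate carefully (keeping track of the fact that the weights may fail to sum exactly to one for each $n$). The hypotheses that $(b_n)$ is nondecreasing, strictly positive, and tends to $+\infty$ are used precisely to ensure positivity of the weights, to bound initial segments, and to guarantee $(b_n - b_N)/b_n \to 1$.
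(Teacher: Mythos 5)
Your proof is correct: the summation-by-parts identity, the positivity of the weights $(b_{k+1}-b_k)/b_n$, and the Toeplitz-type splitting into a finite initial block (killed by $b_n\to+\infty$) and a tail controlled by $\varepsilon$ are all handled properly. The paper states this classical Kronecker lemma without proof, and the Abel-summation argument you give is the standard one, so there is nothing further to reconcile.
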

\begin{mytheo}{\textbf{(Chow} (see \cite{Hall_Heyde_1980}, Theorem 2.17)\textbf{).}}
\label{th:chow}
Let $(M_n)_{n \in \mathbb{N}^{\ast}}$ be a real valued martingale with respect to some filtration $\mathcal{F}=(\mathcal{F}_n)_{n \in \mathbb{N}}$. Then
\begin{align*}
\quad \lim\limits_{n \to +\infty} M_n =M_{\infty} \in \mathbb{R} \quad  a.s. & \quad   \mbox{on the event} \\
&\bigcup_{r \in [0,1]} \Big \{ \sum_{n=1}^{\infty} \mathbb{E} [ \vert M_n -M_{n-1} \vert^{1+r} \vert \mathcal{F}_{n-1} ] < + \infty \Big \}.
\end{align*}
\end{mytheo}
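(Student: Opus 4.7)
The plan is a reduction by predictable localization to the two classical cases (absolute summability for $r=0$, $L^2$-boundedness for $r=1$), combined with a truncation of the martingale differences to interpolate for $r \in (0,1)$.

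First I would write $X_n = M_n - M_{n-1}$ and, fixing $r \in [0,1]$, introduce the predictable increasing process $A_n^{(r)} = \sum_{k=1}^n \mathbb{E}[|X_k|^{1+r} \mid \mathcal{F}_{k-1}]$, which is $\mathcal{F}_{n-1}$-measurable. Define the stopping times $T_K = \inf\{n \geq 1 : A_{n+1}^{(r)} > K\}$; by predictability these are genuine stopping times, and the events $\{T_K = \infty\}$ increase to $\{A_\infty^{(r)} < + \infty\}$ as $K \uparrow \infty$. It therefore suffices to prove $\mathbb{P}$-a.s. convergence of the stopped martingale $M^{T_K}$ for every fixed $K$. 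The gain from localization is that on the stopped scale $\mathbb{E}[A_{n \wedge T_K}^{(r)}] \leq K$, so the a.s. hypothesis is upgraded to a uniform $L^1$ control.

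For $r = 1$, optional stopping combined with orthogonality of martingale increments yields $\sup_n \mathbb{E}[(M_n^{T_K} - M_0)^2] = \sup_n \mathbb{E}[A_{n \wedge T_K}^{(1)}] \leq K$, so $M^{T_K}$ is $L^2$-bounded and Doob's $L^2$-martingale convergence theorem concludes. For $r = 0$, monotone convergence gives $\mathbb{E}[\sum_{k=1}^\infty |X_k|\mathds{1}_{k \leq T_K}] \leq K$, so the series of increments of $M^{T_K}$ is absolutely summable $\mathbb{P}$-a.s., whence $M^{T_K}$ converges. For $r \in (0,1)$, I would decompose each difference as $X_k = X_k' + X_k''$, where $X_k' = X_k \mathds{1}_{|X_k| \leq 1} - \mathbb{E}[X_k \mathds{1}_{|X_k| \leq 1} \mid \mathcal{F}_{k-1}]$ and $X_k''$ is defined analogously on $\{|X_k| > 1\}$. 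Both $(X_k')_k$ and $(X_k'')_k$ are martingale-difference sequences. Using the elementary inequalities $|x|^2 \leq |x|^{1+r}$ for $|x| \leq 1$ and $|x| \leq |x|^{1+r}$ for $|x| > 1$ (together with the contraction of centering), one gets $\sum_k \mathbb{E}[|X_k'|^2 \mid \mathcal{F}_{k-1}] \leq 4 A_\infty^{(r)}$ and $\sum_k \mathbb{E}[|X_k''| \mid \mathcal{F}_{k-1}] \leq 2 A_\infty^{(r)}$. On $\{T_K = \infty\}$ both right-hand sides are bounded by $4K$, so $\sum X_k'$ falls under the $r = 1$ case and $\sum X_k''$ under the $r = 0$ case, and their sum converges a.s.

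The only delicate point is this interpolation step for $r \in (0,1)$: one must verify carefully that the truncated and recentered sequences $(X_k')$, $(X_k'')$ genuinely inherit the martingale property with respect to $(\mathcal{F}_n)$ and that the constants in the conditional moment bounds are independent of $k$ and of the randomness. Once this algebraic bookkeeping is done, everything reduces to the two endpoint cases, which are classical.
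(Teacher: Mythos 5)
Your argument is correct, but there is nothing to compare it with inside the paper: Theorem \ref{th:chow} is quoted from Hall and Heyde (Theorem 2.17) and no proof is given here. Measured against the textbook proof, your route differs in one genuine way. Both arguments localize with the same predictable stopping time $T_K=\inf\{n:\ A^{(r)}_{n+1}>K\}$, exploiting that $A^{(r)}_{n+1}$ is $\mathcal{F}_n$-measurable so that the stopped increments satisfy $\mathbb{E}\big[A^{(r)}_{n\wedge T_K}\big]\leqslant K$; but Hall--Heyde then finish in one stroke for all $p=1+r\in[1,2]$ by invoking a von Bahr--Esseen/Chow type moment inequality, $\mathbb{E}\vert \sum_i X_i\vert^{p}\leqslant 2\sum_i \mathbb{E}\vert X_i\vert^{p}$ for martingale differences, which makes the stopped martingale $\mbox{L}^p$-bounded and lets Doob's convergence theorem conclude. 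You avoid that inequality entirely: the truncation at level $1$, with recentering, splits $X_k$ into a bounded martingale-difference part controlled in conditional $\mbox{L}^2$ (using $x^2\leqslant \vert x\vert^{1+r}$ on $\{\vert x\vert\leqslant 1\}$) and a tail part controlled in conditional $\mbox{L}^1$ (using $\vert x\vert\leqslant\vert x\vert^{1+r}$ on $\{\vert x\vert>1\}$), so everything reduces to the two elementary endpoint cases you prove directly. What your approach buys is self-containedness (only Doob's $\mbox{L}^2$ theorem and absolute summability are needed); what the textbook route buys is brevity and a single uniform argument. Your "delicate point" is in fact routine: $X_k'$ and $X_k''$ are integrable, $\mathcal{F}_k$-measurable and conditionally centered by construction, and conditional variance being dominated by the conditional second moment gives constants $1$ and $2$ (your $4$ and $2$ are safe overestimates). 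One small caveat, inherited from the way the statement is phrased rather than from your proof: the union over $r\in[0,1]$ is uncountable, and your argument (like the original theorem, stated for a fixed $p$) produces an exceptional null set for each fixed $r$; since your truncation and the two finiteness events do not depend on $r$, this is harmless, but it is worth a sentence noting that for each fixed $r$ the inclusion of $\{A^{(r)}_\infty<\infty\}$ in those events holds up to a null set, and that the paper only ever applies the result for a fixed exponent $\rho$.
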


Now, we give a general CLT result from \cite{Hall_Heyde_1980} (Corollary 3.1) which applies to martingale arrays.  

\begin{myprop}
\label{prop:CLT_Hall_Heyde}
Let $(\tilde{M}_{k,n})_{k \in \{1,..,n\},n \in \mathbb{N}}$ be a $\mathbb{R}$-valued martingale array and define $\mathcal{F}^{\tilde{M}}_{k,n} = \sigma(\tilde{M}_{i,n},i \in \{0, \ldots,k \})$.We assume that $(\tilde{M_n})_{n \in \mathbb{N}}$ satisfies the Lindeberg condition:
\begin{align}
\label{hyp:Lindeberg_CLT}
\forall \mathfrak{E} >0 , \quad  \lim_{n \to \infty} \sum_{k=0}^{n-1}\mathbb{E}[  \vert \tilde{M}_{k+1,n}-\tilde{M}_{k,n} \vert^2 \mathds{1}_{ \vert \tilde{M}_{k+1,n}-\tilde{M}_{k,n} \vert > \mathfrak{E} } \vert \mathcal{F}^{\tilde{M}}_{k,n} ] \overset{\mathbb{P}}{=} 0
\end{align}
and that

\begin{align*}
 \lim_{n \to \infty} \sum_{k=0}^{n-1}\mathbb{E}[  \vert \tilde{M}_{k+1,n}-\tilde{M}_{k,n} \vert^2  \vert \mathcal{F}^{\tilde{M}}_{k,n} ] \overset{\mathbb{P}}{=}  \zeta_{\tilde{M}}^2
\end{align*}
with $ \zeta_{\tilde{M}}^2$ an almost sure finite random variable. Then

\begin{align*}
\lim_{n \to \infty} \tilde{M}_{n,n}  \overset{law}{=} \tilde{\mathcal{N}}(\zeta_{\tilde{M}}^2 ),
\end{align*}
where $\tilde{\mathcal{N}}(\zeta_{\tilde{M}}^2 )$ is a random variable with Laplace transform  $\mathbb{E}[\exp ( v \tilde{\mathcal{N}}(\zeta_{\tilde{M}}^2 ) ] = \mathbb{E}[\exp(v^2 \zeta_{\tilde{M}}^2  /2 ))]$ for every $v \in \mathbb{R}$.
\end{myprop}

\subsubsection{The $q$-order ergodic CLT}
When we consider the $q$-order weak approximation $(\overline{X}_{\Gamma_n})_{n\in \mathbb{N}}$ of a Feller process $(X_t)_{t \geqslant 0}$, it can be possible to obtain convergence of some weighted empirical measures. Moreover, the rate of convergence can be improved as $q$ grows. A crucial point to obtain a faster rate is to consider a specific weight sequence when we build the weighted empirical measures (\ref{eq:def_weight_emp_meas}). We begin by a alternative result which is crucial for the choice of the weights and for the proof of the q-order CLT established in Theorem \ref{th:conv_gnl_ordre_q}.

\begin{myprop}
\label{prop:dvpt_Af}
Let $q \in \mathbb{N}^{\ast}$ such that $\boldsymbol{\lambda}^{q}$ is not empty and $n_{q} \in \mathbb{N}$ such that $\gamma_{n_{q}+1+e}=\gamma_{n_{q}+1}$ for $e\in \{0,\ldots,(\mathscr{C}(q)-2)_+\}$. Considering $(c_{e}^{q})_{e \in \{0,\ldots,\mathscr{C}(q)-1\}}$ defined in (\ref{def:coeff_c}), then, for every $f \in \DomA$,

\begin{align}
\label{eq:dvpt_Af}
\gamma_{n_{q}+1}  \sum_{e=0}^{\mathscr{C}(q)-1}  c_{e}^{q} Af(\overline{X}_{\Gamma_{n_{q}+e}}) =&\sum_{e=0}^{(\mathscr{C}(q)-2)_+}  f(\overline{X}_{\Gamma_{n_{q}+e+1}})-f(\overline{X}_{\Gamma_{n_{q}+e}})\\
& \quad + B_q f (\overline{X}_{\Gamma_{n_{q}+e}},\overline{X}_{\Gamma_{n_{q}+e+1}},\gamma_{n_{q}+1},e)  \nonumber \\
& \quad  + \mathcal{R}_{q} f(\overline{X}_{\Gamma_{n_{q}+e}},\gamma_{n_{q}+1},e) . \nonumber
\end{align}

with $B_q f$ defined in (\ref{hyp:incr_X_Lyapunov_vitesse}) and $\mathcal{R}_{q} f$ defined in (\ref{hyp:rate_erreur_tems_cours_fonction_test_reg}), as soon as those quantities are well defined.

\end{myprop}

\begin{proof}
We prove the result by recurrence on $q$. The case $q=1$ follows directly from the definitions of $B_1 f$ and $\mathcal{R}_{1} f$ remembering that $c^{1}_{0}=1$. Assume that $q \geqslant 2$.
\\

\noindent
\textbf{Step 1.}
Since $\gamma_{n_{q}+1+e}=\gamma_{n_{q}+1}$ for $e\in \{0,\ldots,\mathscr{C}(q)-2\}$, then for every $l \in \mathbb{N}^{\ast}$ with $l \leqslant \mathscr{C}(q)-1-e$, we have,
\begin{align*}
\gamma_{n_{q}+1}^{l}A^{q+l}f(\overline{X}_{\Gamma_{n_{q}+e}})=&\sum_{j=0}^{l} (-1)^{l-j}\binom{l}{j}A^{q}f(\overline{X}_{\Gamma_{n_{q}+e+j}}) \\
& + \sum_{i=0}^{l-1} \sum_{j=0}^{l-i-1} (-1)^{l-i-j-1}\binom{l-i-1}{j} \gamma_{n_{q}+1}^{i}B_{1}A^{q+i}f(\overline{X}_{\Gamma_{n_{q}+e+j+1}},\overline{X}_{\Gamma_{n_{q}+e+j}},\gamma_{n_{q}+1})\\
& +\sum_{i=0}^{l-1} \sum_{j=0}^{l-i-1} (-1)^{l-i-j-1}\binom{l-i-1}{j} \gamma_{n_{q}+1}^{i} \mathcal{R}_{1}A^{q+i}f(\overline{X}_{\Gamma_{n_{q}+e+j}},\gamma_{n_{q}+1}).
\end{align*}
In particular, for $ \mathscr{C}(q) \geqslant q$ and $\lambda=(\lambda^{1},\ldots,\lambda^{\mathscr{C}(q)-2}) \in \boldsymbol{\lambda}^{q}  \subset \mathbb{R}^{\mathscr{C}(q)-2} \times \mathbb{R}^{\mathscr{C}(q)-3} \times \ldots \times \mathbb{R}$ 
\begin{align*}
 \sum_{e=0}^{\mathscr{C}(q)-2}& f(\overline{X}_{\Gamma_{n_{q}+e+1}})-f(\overline{X}_{\Gamma_{n_{q}+e}})+B_1 f (\overline{X}_{\Gamma_{n_{q}+e+1}},\overline{X}_{\Gamma_{n_{q}+e}},\gamma_{n_{q}+1}) - \sum_{e=0}^{\mathscr{C}(q)-2}   \sum_{i=1}^{q-1} \frac{\gamma_{n_{q}+1}^i}{i !}A^if(\overline{X}_{\Gamma_{n_{q}+e}}) \\
= & \sum_{e=0}^{\mathscr{C}(q)-2} \gamma_{n_{q}+1}^{q}  \frac{1}{q!} A^{q}f(\overline{X}_{\Gamma_{n_{q}+e}}) +\sum_{l=1}^{\mathscr{C}(q)-2-e} \gamma_{n_{q}+1}^{q+l} \frac{\sum_{h=l}^{\mathscr{C}(q)-2-e}\lambda^{h}_{e}}{(q+l)!} A^{q+l}f(\overline{X}_{\Gamma_{n_{q}+e}}) \\
 &+(1-\sum_{l=1}^{ \mathscr{C}(q)-2-e} \lambda^{l}_{e})  \tilde{\mathcal{R}}_{q} f(\overline{X}_{\Gamma_{n_{q}+e}},\gamma_{n_{q}+1})  + \sum_{l=1}^{ \mathscr{C}(q)-2-e}  \lambda^{l}_{e}  \tilde{\mathcal{R}}_{q+l} f(\overline{X}_{\Gamma_{n_{q}+e}},\gamma_{n_{q}+1})  \\
=& \sum_{e=0}^{ \mathscr{C}(q)-2} \gamma_{n_{q}+1}^{q}  \frac{1}{q!} A^{q}f(\overline{X}_{\Gamma_{n_{q}+e}})  +\sum_{l=1}^{ \mathscr{C}(q)-2-e} \frac{\sum_{h=l}^{ \mathscr{C}(q)-2-e}\lambda^{h}_{e}}{(q+l)!} \sum_{j=0}^{l} (-1)^{l-j}\binom{l}{j} \gamma_{n_{q}+1}^{q}A^{q}f(\overline{X}_{\Gamma_{n_{q}+e+j}})  \\
&+\sum_{l=1}^{ \mathscr{C}(q)-2-e} \frac{\sum_{h=l}^{\mathscr{C}(q)-2-e}\lambda^{h}_{e}}{(q+l)!}  \sum_{i=0}^{l-1} \sum_{j=0}^{l-i} (-1)^{l-i-j-1}\binom{l-i-1}{j} \gamma_{n_{q}+1}^{q+i} \mathcal{R}_{1}A^{q+i}f(\overline{X}_{\Gamma_{n_{q}+e+j}},\gamma_{n_{q}+1}) \\
&+\sum_{l=1}^{\mathscr{C}(q)-2-e} \frac{\sum_{h=l}^{\mathscr{C}(q)-2-e}\lambda^{h}_{e}}{(q+l)!}  \gamma_{n_{q}+1}^{q+1}  \sum_{i=0}^{l-1} \sum_{j=0}^{l-i-1} (-1)^{l-i-j-1}\binom{l-i-1}{j} \gamma_{n_{q}+1}^{q+i}  B_{1}A^{q+i}f(\overline{X}_{\Gamma_{n_{q}+e+j+1}},\overline{X}_{\Gamma_{n_{q}+e+j}},\gamma_{n_{q}+1}) \\
& +(1-\sum_{l=1}^{\mathscr{C}(q)-2-e} \lambda^{l}_{e})  \tilde{\mathcal{R}}_{q} f(\overline{X}_{\Gamma_{n_{q}+e}},\gamma_{n_{q}+1})  + \sum_{l=1}^{\mathscr{C}(q)-2-e}  \lambda^{l}_{e}  \tilde{\mathcal{R}}_{q+l} f(\overline{X}_{\Gamma_{n_{q}+e}},\gamma_{n_{q}+1}) .
\end{align*}

\noindent \textbf{Step 2.} Let us prove that for $p \in \{2,\ldots,q\}$

\begin{align}
\label{eq:dvpt_cumul}
\sum_{e=0}^{\mathscr{C}(q)-2} & \sum_{i=p}^{q} \frac{\gamma_{n_{q}+1}^i}{i !}  A^if(\overline{X}_{\Gamma_{n_{q}+e}}) +\sum_{l=1}^{\mathscr{C}(q)-2-e} \frac{\sum_{h=l}^{\mathscr{C}(q)-2-e}\lambda^{h}_{e}}{(q+l)!} \sum_{j=0}^{l} (-1)^{l-j}\binom{l}{j} \gamma_{n_{q}+1}^{q}A^{q}f(\overline{X}_{\Gamma_{n_{q}+e+j}})    \nonumber \\
    =& \gamma_{n_{q}+1}^p \sum_{u=0}^{\mathscr{C}(q)-\mathscr{C}(q-p+1)}  \zeta_u^{q,p}(\lambda)( \sum_{e=0}^{\mathscr{C}(q-p+1)-1} c_{e}^{q-p+1 }A^pf(\overline{X}_{\Gamma_{n_{q}+e+u}})  ) +D_{q,q-p},
\end{align}
 
with for $b \in \{0,\ldots,q-2\}$,

\begin{align*}
  D_{q,b}=&   \sum_{l=1}^{b}  \gamma_{n_{q}+1}^{q-l} \sum_{u=0}^{\mathscr{C}(b)-\mathscr{C}(l)}  \zeta_u^{q,q-l+1}(\lambda)  \sum_{e=0}^{(\mathscr{C}(l)-2)_+} B_{l}A^{q-l}f(\overline{X}_{\Gamma_{n_{q}+e+u}},\overline{X}_{\Gamma_{n_{q}+e+u+1}}, \gamma_{n_{q}+1},e   ) \\
& + \gamma_{n_{q}+1}^{q-l} \sum_{u=0}^{\mathscr{C}(b)-\mathscr{C}(l)}  \zeta_u^{q,q-l+1}(\lambda)  \sum_{e=0}^{(\mathscr{C}(l)-2)_+} \mathcal{R}_{l}A^{q-l}f(\overline{X}_{\Gamma_{n_{q}+e+u}},\gamma_{n_{q}+1},e ) .
\end{align*}

Notice that $c^{1}_{0}=1$,  and since

\begin{align*}
\zeta_{u}^{q,q}(\lambda) = \frac{1}{q!}\mathds{1}_{u<\mathscr{C}(q)-1}+\sum_{i=0}^{\mathscr{C}(q)-3}\sum_{l=1}^{\mathscr{C}(q)-2-i} \frac{\sum_{h=l}^{\mathscr{C}(q)-2-i}\lambda^{h}_{i}}{(q+l)!} (-1)^{l-u+i}\binom{l}{u-i} \mathds{1}_{0 \leqslant u-i \leqslant l}
\end{align*}

and $D_{q,0}= 0$, then (\ref{eq:dvpt_cumul}) is true for $p=q$. Assume that (\ref{eq:dvpt_cumul}) is true for some $p \in \{3,\ldots,q\}$, and let us show that it still holds with $p$ replaced by $p-1$.  We first apply (\ref{eq:dvpt_cumul}) and then (\ref{eq:dvpt_Af}) with $f$ replaced by $A^{p-1}f$ and $q$ replaced by $(q-p+1)$ which yields

\begin{align*}
\sum_{e=0}^{\mathscr{C}(q)-2} & \sum_{i=p}^{q-1} \frac{\gamma_{n_{q}+1}^i}{i !}  A^if(\overline{X}_{\Gamma_{n_{q}+e}})+ \frac{1}{q!} \gamma_{n_{q}+1}^{q} A^{q}f(\overline{X}_{\Gamma_{n_{q}+e}}) \nonumber \\
& +\sum_{l=1}^{\mathscr{C}(q)-2-e} \frac{\sum_{h=l}^{\mathscr{C}(q)-2-e}\lambda^{h}_{e}}{(q+l)!} \sum_{j=0}^{l} (-1)^{l-j}\binom{l}{j} \gamma_{n_{q}+1}^{q}A^{q}f(\overline{X}_{\Gamma_{n_{q}+e+j}})   \\
=& \gamma_{n_{q}+1}^{p-1} \sum_{u=0}^{\mathscr{C}(q)-\mathscr{C}(q-p+1)}  \zeta_u^{q,p}(\lambda) \sum_{e=0}^{(\mathscr{C}(q-p+1)-2)_{+}}  (A^{p-1}f(\overline{X}_{\Gamma_{n_{q}+e+u+1}})  - A^{p-1}f(\overline{X}_{\Gamma_{n_{q}+e+u}})   ) \\
& + \gamma_{n_{q}+1}^{p-1} \sum_{u=0}^{\mathscr{C}(q)-\mathscr{C}(q-p+1)}  \zeta_u^{q,p}(\lambda) \sum_{e=0}^{(\mathscr{C}(q-p+1)-2)_{+}} B_{q-p+1}A^{p-1}f(\overline{X}_{\Gamma_{n_{q}+e+u}}, \overline{X}_{\Gamma_{n_{q}+e+u+1}},\gamma_{n_{q}+1},e   ) \\
& + \gamma_{n_{q}+1}^{p-1} \sum_{u=0}^{\mathscr{C}(q)-\mathscr{C}(q-p+1)}  \zeta_u^{q,p}(\lambda) \sum_{e=0}^{(\mathscr{C}(q-p+1)-2)_{+}}\mathcal{R}_{q-p+1}A^{p-1}f(\overline{X}_{\Gamma_{n_{q}+e+u}},\gamma_{n_{q}+1},e )+D_{q,q-p}
\\
=& \gamma_{n_{q}+1}^{p-1}  \sum_{u=0}^{\mathscr{C}(q)-\mathscr{C}(q-p+1)}  \zeta_u^{q,p}(\lambda) \sum_{e=0}^{(\mathscr{C}(q-p+1))_{+}}  (A^{p-1}f(\overline{X}_{\Gamma_{n_{q}+e+u+1}})  - A^{p-1}f(\overline{X}_{\Gamma_{n_{q}+e+u}})   ) + D_{q,q-p+1}.
\end{align*}

Moreover, observe that, with notation from (\ref{hyp:rate_erreur_tems_cours_fonction_test_reg}) and using that $\zeta_{\mathscr{C}(q)-1} ^{q,q}(\lambda)=0$,
\begin{align*}
& \sum_{u=0}^{\mathscr{C}(q)-\mathscr{C}(q-p+1)}  \zeta_u^{q,p}(\lambda)  \sum_{e=0}^{(\mathscr{C}(q-p+1)-2)_+} (A^{p-1}f(\overline{X}_{\Gamma_{n_q+e+u+1}})  - A^{p-1}f(\overline{X}_{\Gamma_{n_q+e+u}})   ) +\sum_{e=0}^{\mathscr{C}(q)-2}   \frac{1}{(p-1) !}  A^{p-1}f(\overline{X}_{\Gamma_{n_q+e}})\\
& = \sum_{u=0}^{(\mathscr{C}(q)-\mathscr{C}(q-p+1))\wedge (\mathscr{C}(q)-2)}  \zeta_u^{q,p}(\lambda)  \sum_{e=0}^{(\mathscr{C}(q-p+1)-2)_+} (A^{p-1}f(\overline{X}_{\Gamma_{n_q+e+u+1}})  - A^{p-1}f(\overline{X}_{\Gamma_{n_q+e+u}})   ) \\
&+\sum_{e=0}^{\mathscr{C}(q)-2}   \frac{1}{(p-1) !}  A^{p-1}f(\overline{X}_{\Gamma_{n_q+e}})\\
 &=\sum_{e=0}^{\mathscr{C}(q)-1} \Big(   \frac{1}{(p-1) !} \mathds{1}_{e < \mathscr{C}(q)-1 } +\zeta_{e-(\mathscr{C}(q-p+1)-2)_+-1}^{q,p}(\lambda) \mathds{1}_{(\mathscr{C}(q)-\mathscr{C}(q-p+1)+1) \wedge (\mathscr{C}(q)-1) \geqslant e-(\mathscr{C}(q-p+1)-2)_+  \geqslant 1} \\
 & \qquad -\zeta_{e}^{q,p}(\lambda) \mathds{1}_{e < (\mathscr{C}(q)-\mathscr{C}(q-p+1)+1) \wedge (\mathscr{C}(q)-1) } \Big) A^{p-1}f(\overline{X}_{\Gamma_{n_q+e}})\\
&=\sum_{e=0}^{\mathscr{C}(q)-1}N^{q,p}(\zeta^{q,p}(\lambda))_{e} A^{p-1}f(\overline{X}_{\Gamma_{n_q+e}}),
\end{align*}
and 
\begin{align*}
\sum_{u=0}^{\mathscr{C}(q)-\mathscr{C}(q-p+2)} & \zeta_u^{q,p-1}(\lambda) \sum_{e=0}^{\mathscr{C}(q-p+2)-1} c_{e}^{q-p+2}A^{p-1}f(\overline{X}_{\Gamma_{n_q+e+u}}) 
\\
=&\sum_{e=0}^{\mathscr{C}(q)-1} \left( \sum_{u=0}^{\mathscr{C}(q)-\mathscr{C}(q-p+2)} \zeta^{q,p-1}_u(\lambda)  c_{e-u}^{q-p+2} \mathds{1}_{e-u \in \{ 0,\ldots,\mathscr{C}(q-p+2)-1\} }\right) A^{p-1}f(\overline{X}_{\Gamma_{n_q+e}}) \\
=&\sum_{e=0}^{\mathscr{C}(q)-1} (M^{q,p} \zeta^{q,p-1}(\lambda))_e  A^{p-1}f(\overline{X}_{\Gamma_{n_q+e}}).
\end{align*}

Using that $M^{q,p}\zeta^{q,p-1}(\lambda)=N^{q,p}(\zeta^{q,p}(\lambda))$ and gathering all the terms together completes the proof of \textbf{Step 2}.\\

\noindent \textbf{Step 3}. We are now in a position to complete the proof. We apply (\ref{eq:dvpt_Af}) with $f$ replaced by $Af$ and $q$ replaced by  $q-1$, and for $u \in \{0,1\}$, we obtain, 
\begin{align*}
\gamma_{n_{q}+1}  \sum_{j=0}^{\mathscr{C}(q-1)-1} c_{j}^{q-1}A^2f(\overline{X}_{\Gamma_{n_{q}+u+j}})  = &  \sum_{j=0}^{(\mathscr{C}(q-1)-2)_+}  Af(\overline{X}_{\Gamma_{n_{q}+u+j+1}})-Af(\overline{X}_{\Gamma_{n_{q}+u+j}})\\
& \quad + B_{q-1} Af (\overline{X}_{\Gamma_{n_{q}+u+j}},\overline{X}_{\Gamma_{n_{q}+u+j+1}},\gamma_{n_{q}+1},j)  \nonumber \\
& \quad  + \mathcal{R}_{q-1} Af(\overline{X}_{\Gamma_{n_{q}+u+j}},\gamma_{n_{q}+1},j) . \nonumber
\end{align*}

Combining this expansion with the decomposition established in \textbf{Step 1} and the result from \textbf{Step 2} for $p=2$  yields

\begin{align*}
 \gamma_{n_{q}+1}  & \sum_{e=0}^{\mathscr{C}(q)-2}  Af(\overline{X}_{\Gamma_{n_{q}+e}}) +  \gamma_{n_{q}+1}    \sum_{u=0}^{\mathscr{C}(q)-\mathscr{C}(q-1)}\zeta_u^{q,2}(\lambda)  \sum_{j=0}^{(\mathscr{C}(q-1)-2)_+}  Af(\overline{X}_{\Gamma_{n_{q}+j+u+1}})-Af(\overline{X}_{\Gamma_{n_{q}+j+u}}) \\
=&\sum_{e=0}^{\mathscr{C}(q)-2} \left. f(\overline{X}_{\Gamma_{n_{q}+e+1}})-f(\overline{X}_{\Gamma_{n_{q}+e}})+B_1 f (\overline{X}_{\Gamma_{n_{q}+e}},\overline{X}_{\Gamma_{n_{q}+e+1}},\gamma_{n_{q}+1}) \right. \\
&-\sum_{l=1}^{\mathscr{C}(q)-2-e} \frac{\sum_{h=l}^{\mathscr{C}(q)-2-e}\lambda^{h}_{e}}{(q+l)!}  \gamma_{n_{q}+1}^{q+1}  \sum_{i=0}^{l-1} \sum_{j=0}^{l-i-1} (-1)^{l-i-j-1}\binom{l-i-1}{j} \gamma_{n_{q}+1}^{q+i}  B_{1}A^{q+i}f(\overline{X}_{\Gamma_{n_{q}+e+j+1}},\overline{X}_{\Gamma_{n_{q}+e+j}},\gamma_{n_{q}+1}) \\
&-  \gamma_{n_{q}+1}   \sum_{j=0}^{(\mathscr{C}(q-1)-2)_+}  \sum_{u=0}^{\mathscr{C}(q)-\mathscr{C}(q-1)}  \zeta_u^{q,2}(\lambda) B_{q-1} Af (\overline{X}_{\Gamma_{n_{q}+u+j}},\overline{X}_{\Gamma_{n_{q}+u+j+1}},\gamma_{n_{q}+1},j)  \nonumber \\
&-\sum_{l=1}^{\mathscr{C}(q)-2-e} \frac{\sum_{h=l}^{\mathscr{C}(q)-2-e}\lambda^{h}_{e}}{(q+l)!}  \sum_{i=0}^{l-1} \sum_{j=0}^{l-i} (-1)^{l-i-j-1}\binom{l-i-1}{j} \gamma_{n_{q}+1}^{q+i} \mathcal{R}_{1}A^{q+i}f(\overline{X}_{\Gamma_{n_{q}+e+j}},\gamma_{n_{q}+1}) \\
& -(1-\sum_{l=1}^{\mathscr{C}(q)-2-e} \lambda^{l}_{e})  \tilde{\mathcal{R}}_{q} f(\overline{X}_{\Gamma_{n_{q}+e}},\gamma_{n_{q}+1})  - \sum_{l=1}^{\mathscr{C}(q)-2-e}  \lambda^{l}_{e}  \tilde{\mathcal{R}}_{q+l} f(\overline{X}_{\Gamma_{n_{q}+e}},\gamma_{n_{q}+1})  \\
& -  \gamma_{n_{q}+1}   \sum_{j=0}^{(\mathscr{C}(q-1)-2)_+}   \sum_{u=0}^{\mathscr{C}(q)-\mathscr{C}(q-1)}  \zeta_u^{q,2}(\lambda)   \mathcal{R}_{q-1} Af(\overline{X}_{\Gamma_{n_{q}+u+j}},\gamma_{n_{q}+1},j) . \nonumber \\
& -D_{q,q-2} .
\end{align*}

To complete the proof, it simply boils down to apply the definition of $(c_{j}^{q})_{j \in \{0,\ldots,q-1\}}$ (see (\ref{def:coeff_c})),  $B_q f$ (see (\ref{hyp:incr_X_Lyapunov_vitesse})) and $\mathcal{R}_{q} f$ (see (\ref{hyp:rate_erreur_tems_cours_fonction_test_reg})).

\end{proof}

We are now in a position to state the $q$-order ergodic CLT. Before that, we introduce the step size and the weight sequences. In particular, we assume that
\begin{align}
\label{def:weight_q_order}
\forall n \in \mathbb{N} , e \in \{0,\ldots,(\mathscr{C}(q)-2)_+\},& \nonumber\\
 \gamma_{(\mathscr{C}(q)-1)n+1+e}=&\gamma_{(\mathscr{C}(q)-1)n+1}, \\
 \eta_{(\mathscr{C}(q)-1)n+1+e}=&C_{\gamma,\eta}\left( c_{e}^{q} \gamma_{(\mathscr{C}(q)-1)n+1}+ \mathds{1}_{e=0}c_{q-1}^{q} \gamma_{(\mathscr{C}(q)-1)(n-1)+1} \right)  , \nonumber
\end{align}
with $C_{\gamma,\eta} \in \mathbb{R}^{\ast}$ and the convention $\gamma_{-l}=0$ for $l \in \mathbb{N}^{\ast}$.
\begin{mytheo}
\label{th:conv_gnl_ordre_q}
Let $q \in \mathbb{N}^{\ast}$ such that $\boldsymbol{\lambda}^{q}$ is not empty, let $F \subset \{f,f:(E, \mathcal{B}(E)) \to (\mathbb{R}, \mathcal{B}(\mathbb{R}) ) , Af \in \mathcal{C}_b(E)  \}$, $g:E \to \mathbb{R}_+$ a Borel function, $\epsilon_{\mathfrak{X}},\epsilon_{\mathcal{G}\mathcal{C}}: \mathbb{R}_+ \to \mathbb{R}_+$ be two increasing functions, let $\tilde{\eta}_q: \mathbb{R}_+ \times \{0,\ldots,(q-2)_+\} \to \mathbb{R}_+$ and let $\mathfrak{M}_q$ and $\mathfrak{V}$ be two linear operators defined on $F$. Finally let $\gamma_n$ and $\eta_{n}$, $n \in \mathbb{N}$, be the time step and the weight sequences satisfying (\ref{def:weight_q_order}).\\

 Assume that  $\mathcal{E}_q(F,\tilde{A},A, \mathfrak{M}_q,\tilde{\eta}_q)$ (see (\ref{hyp:rate_erreur_tems_cours_fonction_test_reg})), $\mathcal{GC}_{Q,q}(F,g,\rho,\epsilon_{\mathscr{X}},\epsilon_{\mathcal{G}\mathcal{C}},\mathfrak{V})$ (see (\ref{hyp:incr_X_Lyapunov}) and (\ref{hyp:Lindeberg_CLT})) and $\mathcal{S}\mathcal{W}_{\mathcal{G}\mathcal{C}, \gamma}(g, \rho , \epsilon_{\mathscr{X}}, \epsilon_{\mathcal{G}\mathcal{C}})$ (see (\ref{hyp:step_weight_I_gen_chow_rate})) hold. \\
 
 Then, for every $f \in F$, we have the following properties:

\begin{enumerate}[label=\textbf{\Alph*.}]

\item If $\lim_{n \to \infty} \sqrt{H_{\epsilon_{\mathfrak{X}},n}}  /  H_{\tilde{\eta}_{q},n} =+\infty$, then
\begin{align*}
\lim_{n \to \infty} \frac{H_n}{ C_{\gamma,\eta} \sqrt{H_{\epsilon_{\mathfrak{X}},n}}}   \nu ^{\eta}_n(Af) \overset{law}{=} \mathcal{N}(0, \nu( \mathfrak{V}f )).
\end{align*}
\item \label{eq:TCL_mes_pond_moy_var} 
If $\lim_{n \to \infty}   \sqrt{H_{\epsilon_{\mathfrak{X}},n}}  / H_{\tilde{\eta}_{q},n} =\hat{l} \in \mathbb{R}^{\ast}_+$, then
\begin{align*}
\lim_{n \to \infty}  \frac{H_n}{ C_{\gamma,\eta} \sqrt{H_{\epsilon_{\mathfrak{X}},n}}}  \nu ^{\eta}_n(A f) \overset{law}{=} \mathcal{N}(\hat{l}^{-1} \nu(\mathfrak{M}_q f), \nu( \mathfrak{V}f )).
\end{align*}

\item If $\lim_{n \to \infty}   \sqrt{H_{\epsilon_{\mathfrak{X}},n}} /  H_{\tilde{\eta}_{q},n}=0$, then
\begin{align*}
\lim_{n \to \infty} \frac{H_n}{C_{\gamma,\eta} H_{\tilde{\eta}_{q},n}} \nu ^{\eta}_n(Af) \overset{\mathbb{P}}{=} \nu(\mathfrak{M}_q f)
\end{align*}
Moreover, when $\mathfrak{V}=0$ this convergence is almost sure.

\end{enumerate}

\end{mytheo}

\begin{remark}
\label{rmk:rate}
Notice that if we take $\gamma_n=1/n^{\xi}$ (for $n=n_{q}(q-1)$ with $n_q \in \mathbb{N}$), $\xi \in (0,1/(q+1))$ and denote

 \begin{eqnarray*}
\forall n \in \mathbb{N}^{\ast}, \qquad \mathfrak{r}_{q,n}= 
& \quad  \left\{
    \begin{array}{l}
   \frac{H_n}{ \sqrt{H_{\epsilon_{\mathfrak{X}},n}}} = \sqrt{ \Gamma_{n} }  \quad \mbox{if} \quad \lim_{n \to \infty}   \sqrt{ \Gamma_{n} }  /   H_{\tilde{\eta}_{q},n}=+\infty,\\
  \frac{H_n}{  \sqrt{H_{\epsilon_{\mathfrak{X}},n}}}  =\sqrt{ \Gamma_{n} }   \quad \mbox{if} \quad   \lim_{n \to \infty} \sqrt{ \Gamma_{n} }  /   H_{\tilde{\eta}_{q},n}  =\hat{l} ,\\
 \frac{H_n}{ H_{\tilde{\eta}_{q},n}} \quad \mbox{if} \quad \lim_{n \to \infty}   \sqrt{ \Gamma_{n} }  /   H_{\tilde{\eta}_{q},n}=0,
    \end{array}
\right.
\end{eqnarray*}
the rate of convergence of $(\nu ^{\eta_q}_n(A f))_{n \in \mathbb{N}^{\ast}}$ in the $q$-order ergodic CLT, then we have 
\begin{align*}
\mathfrak{r}_{q,n} \underset{n \to + \infty}{\sim} C n^{(q\xi) \wedge (1/2-\xi/2)}.
\end{align*}
The highest rate of convergence is thus achieved for $\xi=1/(2q+1)$ and is given by $\mathfrak{r}_{q,n} \underset{n \to + \infty}{\sim} C n^{q/(2q+1)}$.

\end{remark}

\begin{proof}
 We assume $q\geqslant2$. The case $q=1$ is similar but simpler so we leave it out.\\
 
 \noindent
\textbf{Step 1.} Let $n \in \mathbb{N}$. We begin by noticing that the following decomposition holds

\begin{align*}
 \nu ^{\eta}_n(A f) =& \frac{1}{H_n} \sum_{k=1}^n \eta_{k}A f(\overline{X}_{\Gamma_{k-1}}) \\
 =& \frac{C_{\gamma,\eta}}{H_n} \sum_{k=0}^{N_{q,n} } \gamma_{(q-1)k+1}\sum_{e=0}^{\mathscr{C}(q)-1} c_{e}^{q} Af(\overline{X}_{\Gamma_{(q-1)k+e}}) \\
 &+ \frac{1}{H_n} \sum_{k=(q-1)(N_{q,n}+1)+1}^{n} \eta_k A f(\overline{X}_{\Gamma_{k-1}}) \\
 &- \frac{ C_{\gamma,\eta} \gamma_{(q-1)N_{q,n}+1} c_{q-1}^{q}}{H_n} Af(\overline{X}_{\Gamma_{(q-1)(N_{q,n}+1)}}) 
\end{align*}

with the notation $N_{q,n}=\lfloor (n-1)/(\mathscr{C}(q)-1) \rfloor-1.$ Since $Af$ is a bounded function, the second and third terms of the $r.h.s.$ of the above equation multiplied by $\frac{H_n}{ C_{\gamma,\eta} \sqrt{H_{\epsilon_{\mathfrak{X}},n}}} $ or $\frac{H_n}{C_{\gamma,\eta} H_{\tilde{\eta}_{q},n}} $ both converge to zero when $n$ tends to infinity. We study the first term of the $r.h.s.$ of the above equation.
By Proposition \ref{prop:dvpt_Af} (with $n_{q}=(q-1)k$),

\begin{align*}
\gamma_{(q-1)k+1}  \sum_{e=0}^{\mathscr{C}(q)-1}  c_{e}^{q} Af(\overline{X}_{\Gamma_{(q-1)k+e}}) =&\sum_{e=0}^{(\mathscr{C}(q)-2)_+}  f(\overline{X}_{\Gamma_{(q-1)k+e+1}})-f(\overline{X}_{\Gamma_{(q-1)k+e}})\\
& \quad + B_q f (\overline{X}_{\Gamma_{(q-1)k+e}},\overline{X}_{\Gamma_{(q-1)k+e+1}},\gamma_{(q-1)k+1},e)  \nonumber \\
& \quad  + \mathcal{R}_{q} f(\overline{X}_{\Gamma_{(q-1)k+e}},\gamma_{(q-1)k+1},e) . \nonumber
\end{align*}

\noindent
\textbf{Step 2.} In this part, we prove that

\begin{align*}
\lim_{n \to \infty} \frac{1}{\sqrt{H_{\epsilon_{\mathfrak{X}},n}}} \sum_{k=0}^{N_{q,n} } \sum_{e=0}^{(\mathscr{C}(q) -2)_{+}}  B_q f (\overline{X}_{\Gamma_{(q-1)k+e}},\overline{X}_{\Gamma_{(q-1)k+e+1}},\gamma_{(q-1)k+1},e) \overset{law}{=} \mathcal{N}(0, \nu( \mathfrak{V}f )).
\end{align*}

From Proposition \ref{prop:CLT_Hall_Heyde}, since (\ref{hyp:Lindeberg_CLT_scheme}) holds and $\lim_{n \in \mathbb{N}^{\ast}} \nu_n^{\epsilon_{\mathscr{X}}}( \mathfrak{V}f ,\omega )= \nu (\mathfrak{V}f ), \; \mathbb{P}-a.s.$, we have

\begin{align*}
\lim_{n \to \infty} \frac{1}{\sqrt{H_{\epsilon_{\mathfrak{X}},n}}} \sum_{k=1}^n 
\mathscr{X}_{f,k} 
\overset{law}{=} \mathcal{N}(0, \nu( \mathfrak{V}f )) 
\end{align*}

Notice that when $\mathfrak{V}=0$ the $l.h.s.$ of the above equation is $\mathbb{P}-a.s.$ equal to zero for every $f \in F$. Now, to obtain the convergence in law, we are going to show that $\mathbb{P}-a.s$, for every $f \in F$,
\begin{equation*}
. \quad \lim_{n \to + \infty}  \frac{1}{\sqrt{H_{\epsilon_{\mathfrak{X}},n}}}  \sum_{k=0}^{N_{q,n} } \sum_{e=0}^{(\mathscr{C}(q)-2)_+}  B_q f (\overline{X}_{\Gamma_{(q-1)k+e}},\overline{X}_{\Gamma_{(q-1)k+e+1}},\gamma_{(q-1)k+1},e)  -\mathscr{X}_{f,(q-1)k+1+e} =0.
\end{equation*}

This last result is a consequence of Kronecker's Lemma as soon as we prove the $a.s.$ convergence of the martingale $(M_n)_{n \in \mathbb{N}^{\ast}}$ defined by $M_0:=0$ and 

\begin{equation*}
M_n:=  \sum_{k=0}^{N_{q,n} } \sum_{e=0}^{(\mathscr{C}(q)-2)_+}  \frac{B_q f (\overline{X}_{\Gamma_{(q-1)k+e}},\overline{X}_{\Gamma_{(q-1)k+e+1}},\gamma_{(q-1)k+1},e) -\mathscr{X}_{f,(q-1)k+1+e} }{\sqrt{H_{\epsilon_{\mathfrak{X}},(q-1)k+e}}}.
\end{equation*}

From the Chow's theorem (see Theorem \ref{th:chow}), this $a.s.$ convergence is a direct consequence of the $a.s.$ finiteness of the series
\begin{equation*}
\sum_{k=1}^{n}  \mathbb{E}[\vert M_k-M_{k-1}\vert^{\rho} \vert \overline{X}_{\Gamma_{k-1}}  ] ,
\end{equation*}
which follows from $\mathcal{GC}_{Q,q}(F,g,\rho,\epsilon_{\mathfrak{X}},\epsilon_{\mathcal{G}\mathcal{C}},\mathfrak{V})$ (see (\ref{hyp:incr_X_Lyapunov})) together with $\mathcal{S}\mathcal{W}_{\mathcal{G}\mathcal{C}, \gamma}(g, \rho ,\epsilon_{\mathfrak{X}}, \epsilon_{\mathcal{G}\mathcal{C}})$ (see (\ref{hyp:step_weight_I_gen_chow_rate})). \\

\noindent
\textbf{Step 3.} To complete the proof, let us show that
\begin{align*}
\mathbb{P}-a.s. \quad \forall f \in F  \quad \lim_{n \to \infty} \frac{1}{H_{\tilde{\epsilon}_{q},n}} \sum_{k=0}^{N_{q,n} } \sum_{e=0}^{(\mathscr{C}(q)-2)_+}  \mathcal{R}_q f (\overline{X}_{\Gamma_{(q-1)k+e}}\gamma_{(q-1)k+1},e)  = \nu(\mathfrak{M}_q f).
\end{align*}

As a direct consequence of $\mathcal{E}_q(F,\tilde{A},A, \mathfrak{M}_q,\tilde{\eta}_q)$ (see (\ref{hyp:rate_erreur_tems_cours_fonction_test_reg})), since $\mathbb{P}-a.s.$, $\lim_{n \to \infty}\nu ^{\tilde{\eta}_q}_n(\mathfrak{M}_q f) = \nu(\mathfrak{M}_q f)$ for every $f \in F$, we only have to prove that 
 \begin{align*}
\mathbb{P}&-a.s. \quad \forall f \in F  \\
& \lim_{n \to \infty} \frac{1}{H_{\tilde{\epsilon}_{q},n}} \sum_{k=0}^{N_{q,n} } \sum_{e=0}^{(\mathscr{C}(q)-2)_+}  \mathcal{R}_q f (\overline{X}_{\Gamma_{(q-1)k+e}},\gamma_{(q-1)k+1},e) - \tilde{\eta}_{q,(q-1)k+1+e} \mathfrak{M}_q f(\overline{X}_{\Gamma_{(q-1)k+e}}) = 0.
\end{align*}

which holds as soon as 
 \begin{align}
 \label{eq:preuve_conv_ident_vitess}
\mathbb{P}-a.s. \quad \forall f \in F  \quad \lim_{n \to \infty} \frac{1}{H_{\tilde{\eta}_{q},n}} \sum_{k=1}^n  \tilde{\eta}_{q,k} \Lambda_{f,q}(\overline{X}_{\Gamma_{k-1}},\gamma_{k}) = 0,  
\end{align}

We recall that we have the following decomposition

\begin{align*}
\forall f \in F, \forall x \in E , \forall \gamma \in [0,\overline{\gamma}], \qquad \Lambda_{f,q}(x,\gamma)= \langle g (  x ) ,\tilde{\mathbb{E}} [\tilde{\Lambda}_{f,q}(x,\gamma)]  \rangle_{\mathbb{R}^l}
\end{align*}
with $g : (E, \mathcal{B}(E))\to \mathbb{R}_+^{l}$, $l \in \mathbb{N}^{\ast}$, a locally bounded Borel measurable function and $\tilde{\Lambda}_{f,w}:(E\times \mathbb{R}_+ \times \tilde{\Omega}, \mathcal{B}(E) \otimes \mathcal{B}(\mathbb{R}_+) \otimes \tilde{\mathcal{G}}) \to \mathbb{R}_+^{l}$ a measurable function such that  $\sup_{i \in \{1,\ldots,l \} ,x \in E,\gamma \in (0,\overline{\gamma}] } \tilde{\mathbb{E}}[\tilde{\Lambda}_{f,q,i}(x,\gamma) ]< + \infty$. Since for every $i \in \{1,\ldots,l\}$, $\sup_{n \in \mathbb{N}^{\ast}} \nu_n^{\tilde{\eta}_q}( g_i ,\omega )< + \infty$, $\mathbb{P}(d \omega)-a.s.$, (\ref{eq:preuve_conv_ident_vitess}) follows from the following result:\\

 Let $(\overline{x}_{_n})_{n \in \mathbb{N}} \in E^{\otimes \mathbb{N}} $. Assume that $\sup_{i \in \{1,\dots,l \}}\sup_{n \in \mathbb{N}^{\ast}} \frac{1}{H_{\tilde{\eta}_q  ,n}}\sum_{k=1}^n \tilde{\eta}_{q,k}   g_i(\overline{x}_{_{k-1}})<+ \infty$, then, for every $f \in F$, 
\begin{align*}
\lim\limits_{n\to \infty} \frac{1}{H_{\tilde{\eta}_q,n}}\sum_{k=1}^n  \tilde{\eta}_{q,k} \Lambda_{f,q}(\overline{x}_{_{k-1}},\gamma_k) =0.
\end{align*}
 In order to obtain this result, we are going to show that, for every $f \in F$, every $i \in \{1,\ldots, l \}$, and every $(\overline{x}_{n})_{n \in \mathbb{N}} \in E^{\otimes \mathbb{N}} $, then
\begin{equation*}
\tilde{\mathbb{P}}(d \tilde{\omega})-a.s. \quad \lim\limits_{n \to \infty} \frac{1}{H_{\tilde{\eta}_q  ,n}}\sum_{k=1}^n \tilde{\eta}_{q,k} \tilde{\Lambda}_{f,q,i}(\overline{x}_{{k-1}},\gamma_k,\tilde{\omega}) g_i (\overline{x}_{{k-1}}) =0  ,
\end{equation*}
and the result will follow from the Dominated Convergence theorem since for every $n \in \mathbb{N}^{\ast}$, 

\begin{align*}
 \frac{1}{H_{\tilde{\eta}_q ,n}}\sum_{k=1}^n  & \tilde{\eta}_{q,k}  \tilde{\Lambda}_{f,q,i}(\overline{x}_{{k-1}},\gamma_k,\omega) g_i (\overline{x}_{{k-1}}) \\
 &\leqslant \sup_{x \in E} \sup_{\gamma \in (0,\overline{\gamma}]}\tilde{\Lambda}_{f,2,i} (x,\gamma,\tilde{\omega})    \sup_{n \in \mathbb{N}^{\ast}} \frac{1}{H_{\tilde{\eta}_q ,n}}\sum_{k=1}^n \tilde{\eta}_{q,k} g_i(\overline{x}_{{k-1}})< +\infty .
\end{align*}
with $ \tilde{\mathbb{E} } [  \sup_{x \in E} \sup_{\gamma \in (0,\overline{\gamma}]}\tilde{\Lambda}_{f,q,i} (x,\gamma,\tilde{\omega})   ]< +\infty$ and $\sup_{n \in \mathbb{N}^{\ast}} \frac{1}{H_{\tilde{\eta}_q ,n}}\sum_{k=1}^n\tilde{\eta}_{q,k}  g_i(\overline{x}_{{k-1}})<+ \infty$. We fix $f \in F$, $i \in \{1,\ldots,N \}$ and $(\overline{x}_{n})_{N \in \mathbb{N}} \in E^{\otimes \mathbb{N}} $ and we assume that $\mathcal{E}_q(\tilde{A},A,\mathfrak{M}_q,\tilde{\eta}_q)$ \ref{hyp:erreur_tems_cours_fonction_test_reg_Lambda_representation_1} (see (\ref{hyp:erreur_temps_cours_fonction_test_reg_Lambda_representation_2_1})) holds for $\tilde{\Lambda}_{f,q,i}$. If instead $\mathcal{E}_q(\tilde{A},A,\mathfrak{M}_q)$ \ref{hyp:erreur_temps_cours_fonction_test_reg_Lambda_representation_2} (see (\ref{hyp:erreur_temps_cours_fonction_test_reg_Lambda_representation_2_2})) is satisfied, the proof is similar but simpler so we leave it to the reader.

Let $\underline{n}(\tilde{\omega}) :=\inf\{n \in \mathbb{N}^{\ast},\sup_{k \geqslant n} \gamma_{k} \leqslant \underline{\gamma}(\tilde{\omega}) \}$. By assumption $\mathcal{E}_q(F,\tilde{A},A, \mathfrak{M}_q,\tilde{\eta}_q)$ \ref{hyp:erreur_tems_cours_fonction_test_reg_Lambda_representation_1} (ii)(see (\ref{hyp:erreur_temps_cours_fonction_test_reg_Lambda_representation_2_2})), $\tilde{\mathbb{P}}(d\tilde{\omega})-a.s$, for every $R>0$, there exists $K_R(\tilde{\omega}) \in \mathcal{K}_E$ such that 
\begin{align*}
 \sup_{x \in K_{R}^c(\tilde{\omega}) }\sup_{\gamma \in (0, \underline{\gamma}(\tilde{\omega}) ]} \tilde{\Lambda}_{f,q,i}(x, \gamma,\tilde{\omega})<1/R.
 \end{align*}
  Moreover,
\begin{align*}
 \sup_{n \geqslant \underline{n}(\tilde{\omega}) } \frac{1}{H_{\tilde{\eta}_q ,n}} \sum_{k=\underline{n}(\tilde{\omega}) }^n \tilde{\eta}_{q,k} \tilde{\Lambda}_{f,2,i}(\overline{x}_{{k-1}},\gamma_k,\tilde{\omega}) & g (\overline{x}_{{k-1}})   \mathds{1}_{K_{R}^c(\tilde{\omega})}(\overline{x}_{k-1}) \\
  \leqslant  &  \sup_{x \in K_{R}^c(\tilde{\omega})} \sup_{\gamma \in (0,\underline{\gamma}(\tilde{\omega})]}  \tilde{\Lambda}_{f,q,i} (x,\gamma,\tilde{\omega})  \sup_{n \in \mathbb{N}^{\ast}} \frac{1}{H_{\tilde{\eta}_q  ,n}}\sum_{k=1}^n \tilde{\eta}_{q,k}  g_i(\overline{x}_{k-1}).
\end{align*}
We let $R$ tends to infinity and since $\sup_{n \in \mathbb{N}^{\ast}} \frac{1}{H_{\tilde{\eta}_q ,n}}\sum_{k=1}^n \tilde{\eta}_{q,k}  g_i(\overline{x}_{k-1})< + \infty$, the $l.h.s.$ of the above equation converges $\tilde{\mathbb{P}}(d\tilde{\omega})-a.s.$ to 0. Finally, since $\underline{n}(\tilde{\omega}) $ is $\tilde{\mathbb{P}}(d\tilde{\omega})-a.s.$ finite, we also have
\begin{align*}
 \tilde{\mathbb{P}}(d\tilde{\omega})-a.s. \quad \forall R>0, \quad\lim_{n \to \infty}\frac{1}{H_{\tilde{\eta}_q  ,n}}  \sum_{k=1}^{\underline{n}(\tilde{\omega}) -1} \tilde{\eta}_{q,k}  \tilde{\Lambda}_{f,2,i}(\overline{x}_{{k-1}},\gamma_k,\tilde{\omega}) g (\overline{x}_{{k-1}})  \mathds{1}_{K_{R}^c(\tilde{\omega})}(\overline{x}_{{k-1}}) =0.
\end{align*}

Moreover, from $\mathcal{E}_q(F,\tilde{A},A, \mathfrak{M}_q,\tilde{\eta}_q)$ \ref{hyp:erreur_tems_cours_fonction_test_reg_Lambda_representation_1} (i)(see (\ref{hyp:erreur_temps_cours_fonction_test_reg_Lambda_representation_2_1})), we derive that, $\tilde{\mathbb{P}}(d\tilde{\omega})-a.s.$, for every $R>0$, $\lim\limits_{n \to \infty} \tilde{\Lambda}_{f,q,i}(\overline{x}_{{n-1}},\gamma_n ,\tilde{\omega} ) \mathds{1}_{  K_ R(\tilde{\omega}) }(\overline{x}_{_{k-1}})=0, \;  $ Then, since $g_i $ is a locally bounded function, as an immediate consequence of the Cesaro's lemma, we obtain
\begin{equation*}
\tilde{\mathbb{P}}(d\tilde{\omega}) \quad \forall R>0, \quad \lim\limits_{n \to \infty} \frac{1}{H_{\tilde{\eta}_q ,n}} \sum_{k=1}^n \tilde{\eta}_{q,k} \tilde{\Lambda}_{f,q,i}(\overline{x}_{{k-1}},\gamma_k,\tilde{\omega}) g_i (\overline{x}_{{k-1}}) \mathds{1}_{   K_ R(\tilde{\omega}) }(\overline{x}_{k-1}) =0  
\end{equation*}
Applying the same approach for every $i \in \{1,\ldots,q \}$, the Dominated Convergence Theorem yields:
\begin{align*}
\forall (\overline{x}_{n})_{n \in \mathbb{N}} \in E^{\otimes \mathbb{N}} , \forall f \in F, \qquad \lim\limits_{n \to \infty} \frac{1}{H_{\tilde{\eta}_q ,n}} \sum_{k=1}^n  \Lambda_{f,q}(\overline{x}_{{k-1}},\gamma_k) = 0.
\end{align*}

Finally, since for every $i \in \{1,\ldots,l\}$, $\sup_{n \in \mathbb{N}^{\ast}} \nu_n^{\tilde{\eta}_q}( g_i ,\omega )< + \infty, \; \mathbb{P}-a.s.$, then (\ref{eq:preuve_conv_ident_vitess}) follows. We gather all the terms together and the proof is completed.

\end{proof}

\subsection{Example - The Euler scheme}
Using this abstract approach, we recover the results obtained in \cite{Lamberton_Pages_2002} or \cite{Panloup_2008_rate} concerning the study of the Euler scheme of a $d$-dimensional Brownian diffusion under weakly mean reverting properties. We consider a $N$-dimensional Brownian motion $(W_t)_{t \geqslant 0}$. We are interested in the strong solution - assumed to exist and to be unique - of the $d$-dimensional stochastic equation
\begin{align}
\label{eq:EDS_Euler_dim_d}
X_t= x+\int_0^t b(X_{s})ds + \int_0^t  \sigma(X_{s})  dW_s
\end{align}
where $b:\mathbb{R}^d \to \mathbb{R}^d$, $\sigma:\mathbb{R}^d \to \mathbb{R}^{d \times N}$. Let $V:\mathbb{R} \to [1,+\infty)$, the Lyapunov function of this system such that $\mbox{L}_V$ (see (\ref{hyp:Lyapunov})) holds with $E=\mathbb{R}^d $, and
\begin{align*}
\vert \nabla V \vert^2 \leqslant C_V V, \qquad \Vert D^2 V \Vert_{\infty} < + \infty.
\end{align*}
Moreover, we assume that for every $x \in \mathbb{R},  \quad  \vert b(x) \vert^2 + \mbox{Tr}[\sigma \sigma^{T}(x)]   \leqslant V^a(x)$ for some $a \in (0,1]$. Finally, for $p \geqslant 1$, we introduce the following $\mbox{L}_p$-mean reverting property of $V$,
\begin{align*}
\exists \alpha > 0, \beta \in \mathbb{R}, &\forall x \in \mathbb{R},  \nonumber \\   
&\langle \nabla V(x), b(x) \rangle+ \frac{1}{2}   \Vert \lambda_{p} \Vert_{\infty} 2^{(2p-3)_+} \mbox{Tr}[\sigma \sigma^{T}(x)]   \leqslant \beta - \alpha V^a (x )
\end{align*}
with for every $x \in \mathbb{R}^d$, $\lambda_p(x):= \sup\{\lambda_{p,1}(x),\ldots,\lambda_{p,d}(x),0 \}$, with $\lambda_{p,i}(x)$ the $i$-th eigenvalue of the matrix $D^2V(x)+2(p-1) \nabla V(x)^{\otimes 2} / V(x) $. We now introduce the Euler scheme of $(X_t)_{t \geqslant 0}$. Let $\rho \in [1,2]$ and $\epsilon_{\mathcal{I}}(\gamma)=\gamma^{\rho/2}$ and assume that (\ref{hyp:accroiss_sw_series_2}), $\mathcal{S}\mathcal{W}_{\mathcal{I}, \gamma,\eta}(\rho, \epsilon_{\mathcal{I}}) $ (see (\ref{hyp:step_weight_I})) and $\mathcal{S}\mathcal{W}_{\mathcal{II},\gamma,\eta}$ (see (\ref{hyp:step_weight_II})) hold. Let $(U_n)_{n}$ be a sequence of $\mathbb{R}^N$-valued centered independent and identically distributed random variables with covariance identity and bounded moments of order $2p$. We define the Euler scheme with decreasing step  $(\gamma_n)_{n \in \mathbb{N}^{\ast}}$, $(\overline{X}_{\Gamma_n} )_{n \in \mathbb{N}}$ of $(X_t)_{t \geqslant 0}$ (\ref{eq:EDS_Euler_dim_d}) on the time grid $\{\Gamma_n=\sum_{k=1}^n \gamma_k, n \in \mathbb{N} \}$ by
\begin{align*}
\forall n \in \mathbb{N} , \qquad  \overline{X}_{\Gamma_{n+1}}  =& \overline{X}_{\Gamma_n} + \gamma_{n+1} b(\overline{X}_{\Gamma_{n}}) + \sqrt{\gamma_{n+1}} \sigma(\overline{X}_{\Gamma_{n}})U_{n+1},\quad \overline{X}_{0} =x.
\end{align*}
We consider $ (\nu^{\eta}_n(dx,\omega))_{n \in \mathbb{N}^{\ast}}$ defined as in (\ref{eq:def_weight_emp_meas}) with $(\overline{X}_{\Gamma_n} )_{n \in \mathbb{N}}$ defined above. Now, we specify the measurable functions $\psi,\phi:[1,+\infty) \to [1,+\infty)$ as $\psi_p(y)=y^{p}$ and $\phi(y)=y^a$. Moreover, let $s \geqslant 1$ such that $a\,p\rho /s \leqslant p+a-1$, $p/s+a-1>0$ and $\Tr[\sigma \sigma^{T}] \leqslant CV^{p/s+a-1}$. Then, it follows from Theorem \ref{th:identification_limit} that there exists an invariant distribution $\nu$ for $(X_t)_{t \geqslant 0}$. Moreover, $ (\nu^{\eta}_n(dx,\omega))_{n \in \mathbb{N}^{\ast}}$ $a.s.$ weakly converges towards $\mathcal{V}$, the set of invariant distributions of $(X_t)_{t \geqslant 0}$ and when it is unique $i.e.$ $\mathcal{V}=\{\nu\}$, we have
\begin{align*}
\mathbb{P}-a.s. \qquad \lim_{n \to + \infty}\nu^{\eta}_n(f)= \nu(f),
\end{align*}
for every $\nu-a.s.$ continuous function $f\in \mathcal{C}_{\tilde{V}_{\psi_p,\phi,s}}(\mathbb{R}^d)$ defined in (\ref{def:espace_test_function_cv}).  Taking $V:x \mapsto 1+ \vert x \vert^{2}$, we obtain the $a.s.$ convergence for the $2p/s+2a-2$-Wasserstein distance. In addition to that $\mathbb{P}-a.s. $ weak converge result we can also establish a first order CLT. Let $\tilde{\rho}_1 \in [1,2]$, let $C_{\gamma,\eta}>0$ and let us define $\eta_{1,n}=C_{\gamma,\eta}\gamma_n$, $n \in \mathbb{N}^{\ast}$ and 
 \begin{align*}
 F_1 = \{f \in \mathcal{C}^{4}(\mathbb{R}^d;\mathbb{R}), \forall l\in \{2,\ldots,4\},  D^l f \in \mathcal{C}_0(\mathbb{R}^d;\mathbb{R})  \},
 \end{align*}
and the linear operator $\mathfrak{M}_1$ defined on $\mathcal{C}^4(\mathbb{R}^d;\mathbb{R})$ such that for every $f \in \mathcal{C}^4(\mathbb{R}^d;\mathbb{R})$,

\begin{align*}
\mathfrak{M}_1f (x) =& -\frac{1}{2} \big(D^2f(x);   b(x)^{\otimes 2}  \big)  - \mathbb{E} \Big[ \frac{1}{2} \big(D^3f(x); (\sigma(x)U)^{\otimes 2} \otimes b(x) \big)  + \frac{1}{4!} \big(D^4f(x); (\sigma(x)U)^{\otimes 4}  \big) \Big] . \nonumber
\end{align*}
 Let $\tilde{\eta}_1(\gamma)=\gamma^{2}$.  Assume that (\ref{eq:weight_def}), (\ref{hyp:accroiss_sw_series_2}), $\mathcal{S}\mathcal{W}_{\mathcal{I}, \gamma,\eta}(\rho, \epsilon_{\mathcal{I}}) $ (see (\ref{hyp:step_weight_I})) and $\mathcal{S}\mathcal{W}_{\mathcal{II},\gamma,\eta}$ (see (\ref{hyp:step_weight_II})) hold with $\eta$ replaced by $\tilde{\eta}_1$ and by $\gamma$.  Now, we introduce necessary assumptions concerning the random variables that are used to build this scheme.  Let $q \in \mathbb{N}^{\ast}$, $p \geqslant 0$. Now let $(U_n)_{n \in \mathbb{N}^{\ast}} $ be a sequence of $\mathbb{R}^N$-valued independent and identically distributed random variables such that
\begin{align}
\label{hyp:matching_normal_moment_ordre_q_va_schema_Talay}
M_{\mathcal{N},q}(U) \quad \equiv \qquad \forall n \in \mathbb{N}^{\ast} ,\forall \tilde{q} \in \{1, \ldots, q\} , \quad \mathbb{E}[(U_n)^{\otimes \tilde{q}}]=\mathbb{E}[(\mathcal{N}(0,I_d))^{\otimes \tilde{q}}],
\end{align}
and
\begin{align}
\label{hyp:moment_ordre_p_va_schema_Talay}
M_p(U) \qquad \sup_{n \in \mathbb{N}^{\ast}} \mathbb{E}[\vert U_n \vert^{2p} ] < + \infty.
\end{align}
We assume that the sequence $(U_n)_{n \in \mathbb{N}^{\ast}}$ satisfies $M_{\mathcal{N},3}(U)$ (see (\ref{hyp:matching_normal_moment_ordre_q_va_schema_Talay}))) and $M_{2}(U)$ (see (\ref{hyp:moment_ordre_p_va_schema_Talay})) and that $\mathcal{S}\mathcal{W}_{\mathcal{G}\mathcal{C}, \gamma}( \tilde{\rho}_1 , \gamma,\gamma)$  (see (\ref{hyp:step_weight_I_gen_chow_rate_sans_g})) holds. \\
Also assume that $g_{\sigma,1}\leqslant C  V^{p/s+a-1}$, with $g_{\sigma,1}=\Tr[ \sigma \sigma^{T} ]^{4}+\vert b \vert^{2}$, that $\Tr[ \sigma \sigma^{T} ]=o_{\vert x \vert \to +\infty}(V^{p/s+a-1})$ and that $\nu$ is unique. Finally assume that for every $f \in F_1$, $\vert \sigma^{T} Df \vert^2 \in \mathcal{C}_{\tilde{V}_{\psi_p,\phi,s}}(\mathbb{R}^d)$ and $\mathfrak{M}_1 f \in \mathcal{C}_{\tilde{V}_{\psi_p,\phi,s}}(\mathbb{R}^d)$. \\

Then, for every $f \in F_1$,

\begin{enumerate}[label=\textbf{\roman*.}]
\item If $\lim_{n \to \infty} \sqrt{ \Gamma_{n} }  /  H_{\tilde{\eta}_{1},n} =+\infty$,
\begin{align*}
\lim_{n \to \infty}   \sqrt{ \Gamma_{n} }  \nu ^{\eta_1}_n(Af) \overset{law}{=} \mathcal{N}(0, \nu( \vert \sigma^{T} Df \vert^2 )).
\end{align*}
\item \label{e1:TCL_mes_pond_moy_var} 
If $\lim_{n \to \infty}   \sqrt{ \Gamma_{n} }  /   H_{\tilde{\eta}_{1},n}  =\hat{l} \in \mathbb{R}^{\ast}_+$, 
\begin{align*}
\lim_{n \to \infty}  \sqrt{ \Gamma_{n} }  \nu ^{\eta_1}_n(A f) \overset{law}{=} \mathcal{N}(\hat{l}^{-1} \nu(\mathfrak{M}_1 f), \nu( \vert \sigma^{T} Df \vert^2 )).
\end{align*}

\item If $\lim_{n \to \infty}   \sqrt{ \Gamma_{n} }  /   H_{\tilde{\eta}_{1},n}=0$, 
\begin{align*}
\lim_{n \to \infty} \frac{H_n}{ H_{\tilde{\eta}_{1},n}} \nu ^{\eta_1}_n(Af) \overset{\mathbb{P}}{=} \nu(\mathfrak{M}_1 f).
\end{align*}
\end{enumerate}

This result was initially obtained in \cite{Lamberton_Pages_2002} but under strongly mean reverting assumption $i.e.$ $a=1$. The extension of this result to the weak mean reverting setting was developed in \cite{Panloup_2008_rate}. Notice that,  for $f \in F_{1}$ the Ito's Lemma yields, 
\begin{align*}
  \mathbb{E}[f(X_{t})^{2}] =  &  \mathbb{E}[f(X_{0})^{2}]  + \int_{0}^{t}\mathbb{E}[ f(X_{s})Af(X_{s}) +\vert \sigma^{T} Df \vert^2(X_{s})]ds .
\end{align*}
In particular, choosing $X_{0} \sim \nu$, we obtain $\nu( \vert \sigma^{T} Df \vert^2 ))=-2 \nu(fAf)$ and the asymptotic variance of our first order CLT is the same as in the continuous case \cite{Bhattacharya_1982}.
}
\begin{remark}
\label{remark:ordre_cv_1_appli}
Notice that if we take $\gamma_n=1/n^{\xi}$, $\xi \in (0,1/2)$ and 
$\eta=\gamma$, the mentioned step weight assumptions are satisfied (take $\rho \in (1/(1-\xi),2]$ and $\tilde{\rho}_1 \in(2/(1+\xi),2]$). Then, if we define by

 \begin{eqnarray*}
\forall n \in \mathbb{N}^{\ast}, \qquad \mathfrak{r}_n= 
& \quad  \left\{
    \begin{array}{l}
    \sqrt{ \Gamma_{n} }  \quad \mbox{if} \quad \lim_{n \to \infty}   \sqrt{ \Gamma_{n} }  /   H_{\tilde{\eta}_{1},n}=+\infty,\\
 \sqrt{ \Gamma_{n} }   \quad \mbox{if} \quad   \lim_{n \to \infty} \sqrt{ \Gamma_{n} }  /   H_{\tilde{\eta}_{1},n}  =\hat{l} ,\\
 \frac{H_n}{ H_{\tilde{\eta}_{1},n}} \quad \mbox{if} \quad \lim_{n \to \infty}   \sqrt{ \Gamma_{n} }  /   H_{\tilde{\eta}_{1},n}=0,
    \end{array}
\right.
\end{eqnarray*}
the rate of convergence of $(\nu ^{\eta_1}_n(A f))_{n \in \mathbb{N}^{\ast}}$, we have 
\begin{align*}
\mathfrak{r}_n \underset{n \to + \infty}{\sim} C n^{\xi \wedge (1/2-\xi/2)}.
\end{align*}
The highest rate of convergence is thus achieved for $\xi=1/3$ and is given by $\mathfrak{r}_n \underset{n \to + \infty}{\sim} C n^{1/3}$.
\end{remark}

\section{Application - The Talay second weak order scheme}
\label{Application - The Talay second weak order scheme}
\paragraph{Notations. \\}
 In the sequel we will use the following notations. First, for $\alpha \in (0,1]$ and $f$ an $\alpha$-H\"older function we denote $[f]_{\alpha}=\sup_{x \neq y}\vert f(y)-f(x) \vert / \vert y -x \vert^{\alpha}$.\\
Now, let $d \in \mathbb{N}$. For any $ \mathbb{R}^{d \times d}$-valued symmetric matrix $S$, we define $\lambda_S:= \sup\{\lambda_{S,1},\ldots, \lambda_{S,d},0 \}$, with $\lambda_{S,i}$ the $i$-th eigenvalue of $S$.\\

\paragraph{Presentation of the main result. \\}
In this section we study the second order convergence of the weighted empirical measures of a scheme designed in \cite{Talay_1990} and adapted to the case of decreasing time steps. We consider a $N$-dimensional Brownian motion $(W_t)_{t \geqslant 0}$. We are interested in the solution - assumed to exist and to be unique - of the $d$-dimensional stochastic equation
\begin{align*}
X_t= x+\int_0^t b(X_{s})ds + \int_0^t  \sigma(X_{s})  dW_s,
\end{align*}
where $b: \mathbb{R}^d \to \mathbb{R}^d$ and $\sigma:\mathbb{R}^{d } \to \mathbb{R}^{d \times N},$ are locally bounded functions. The infinitesimal generator of this process is given by
\begin{align*}
Af(x) =&\langle b(x) , \nabla f(x) \rangle + \frac{1}{2}\sum_{i,j=1}^d (\sigma \sigma^{T})_{i,j}(x) \frac{\partial^2f}{\partial x_i \partial x_j}(x)   
\end{align*} 

and its domain $\DomA$ contains $\DomA_0 =\mathcal{C}^2_K(\mathbb{R}^d)$. Notice that $\DomA_0 $ is dense in $\mathcal{C}_0(E)$. Now, we present the Talay's scheme, introduced in \cite{Talay_1990}, of $(X_t)_{t \geqslant 0}$ adapted to the case of decreasing time steps. \\


Moreover, let $(\kappa_n)_{n \in \mathbb{N}^{\ast}}$ be a sequence of $\mathbb{R}^{N \times N}$-valued independent and identically distributed random variables such that for every $ n \in \mathbb{N}^{\ast}$, $\kappa_n$ is made of $N\times N$ independent components and for every $(i,j) \in \{1, \ldots, N\}^2$, $\mathbb{P}(\kappa^{i,j}_{n}=-1/2)=\mathbb{P}(\kappa^{i,j}_{n}=1/2)=1/2$. At this point we define the sequence $(\mathcal{W}_n)_{n \in \mathbb{N}^{\ast}}$ of $\mathbb{R}^{N \times N}$-valued random variables such that for every $ n \in \mathbb{N}^{\ast}$,
\begin{align*}
\forall i,j \in \{1,\ldots,N \}, \qquad \mathcal{W}^{i,i}_n=\frac{1}{2}( \vert U^i_n \vert^2-1) \quad \mbox{and} \quad \mathcal{W}^{i,j}_n=  \frac{1}{2} U^i_n U^j_n -\kappa^{i \wedge j, i \vee j }_n \quad \mbox{for} \; i \neq j.
\end{align*}


For every $n \in \mathbb{N}$, the Talay's scheme with decreasing step  is defined by
\begin{align*}
\overline{X}_{\Gamma_{n+1}}  =& \overline{X}_{\Gamma_n} + \sqrt{\gamma_{n+1}}\sigma(\overline{X}_{\Gamma_{n}})U_{n+1}+ \gamma_{n+1}  \Big( b(\overline{X}_{\Gamma_{n}}) + (D \sigma( \overline{X}_{\Gamma_n} ) ; \sigma( \overline{X}_{\Gamma_n} )\mathcal{W}_{n+1}^{T}  )\Big)  \\
& + \gamma_{n+1}^{3/2} \tilde{\sigma}( \overline{X}_{\Gamma_n} ) U_{n+1}  + \gamma_{n+1}^{2} \frac{1}{2} Ab( \overline{X}_{\Gamma_n} ) ,  \nonumber
\end{align*}
with, for every $i \in \{1,\ldots,N\}$, and $j \in \{1, \ldots,d\}$, $\tilde{\sigma}_{j,i}=(\tilde{\sigma}_i)_j$ where
\begin{align*}
\begin{array}{crcl}
\tilde{\sigma}_i & :\mathbb{R}^d & \to &  \mathbb{R}^d\\
 &x& \mapsto &  \frac{1}{2}  \sum\limits_{l=1}^d \Big( \partial_{x_l} b( x ) \sigma_{l,i}( x ) +\partial_{x_l} \sigma_{l,i} ( x ) b( x ) + \frac{1}{2} \sum\limits_{j=1}^d (\sigma \sigma^{T})_{l,j}(x) \frac{\partial^2 \sigma_i }{\partial x_l \partial x_j}( x ) \Big).
\end{array}
\end{align*}
with, for every $i \in \{1,\ldots,N\}$, $\sigma_i:\mathbb{R}^d \to \mathbb{R}^d, x \mapsto \sigma_i(x)= (\sigma_{1,i}(x), \ldots , \sigma_{d,i}(x) )$.


We will also denote $\Delta \overline{X}_{n+1}=\overline{X}_{\Gamma_{n+1}}-\overline{X}_{\Gamma_{n}}$ and
\begin{align}
\label{def:incr_Talay}
& \Delta \overline{X}^1_{n+1} =  \gamma_{n+1}^{1/2} \sigma (\overline{X}_{\Gamma_{n}}) U_{n+1} = \gamma_{n+1}^{1/2} \sum_{i=1}^N \sigma_i (\overline{X}_{\Gamma_{n}}) U^i_{n+1}  , \quad  \Delta \overline{X}^2_{n+1} =   \gamma_{n+1}b(\overline{X}_{\Gamma_{n}} )   , \\
&\Delta \overline{X}^3_{n+1} =  (D \sigma( \overline{X}_{\Gamma_n} ) ; \sigma( \overline{X}_{\Gamma_n} )\mathcal{W}_{n+1}^{T}  ) =  \gamma_{n+1} \sum_{i,j=1}^N \sum_{l=1}^d \partial_{x_l} \sigma_i ( \overline{X}_{\Gamma_n} )  \sigma_{l,j}( \overline{X}_{\Gamma_n} ) \mathcal{W}^{i,j}_{n+1}     , \nonumber \\
& \Delta \overline{X}^4_{n+1} =  \gamma_{n+1}^{3/2} \tilde{\sigma}( \overline{X}_{\Gamma_n} ) U_{n+1}  =\gamma_{n+1}^{3/2} \sum_{i=1}^N \tilde{\sigma}_i( \overline{X}_{\Gamma_n} ) U^i_{n+1}   \nonumber \\
&\Delta \overline{X}^5_{n+1} =   \gamma_{n+1}^2 \frac{1}{2} Ab( \overline{X}_{\Gamma_n} )  \nonumber
\end{align}


and $\overline{X}_{\Gamma_{n+1}}^i=\overline{X}_{\Gamma_n}+ \sum_{j=1}^i \Delta \overline{X}^i_{n+1}$. Now, we assume the existence of a Lyapunov function $V: \mathbb{R}^d \to [v_{\ast}, \infty)$, $v_{\ast}>0$, satisfying $\mbox{L}_V$ (see (\ref{hyp:Lyapunov})) and which is essentially quadratic:
\begin{align}
\label{hyp:Lyapunov_control_Talay}
\vert \nabla V \vert^2 \leqslant C_V V, \qquad \sup_{x \in \mathbb{R}^d} \vert D^2 V(x) \vert < + \infty
\end{align}
It remains to introudce the mean-reverting property of $V$. We define
\begin{align}
\label{def:lambda_psi_Talay}
\forall x \in \mathbb{R}^d , \quad \lambda_{\psi}(x):= \lambda_{D^2V(x)+2\nabla V(x)^{\otimes 2} \psi''\circ V(x) \psi'\circ V(x)^{-1}}  .
\end{align}
When $\psi(y)=\psi_p(y)=y^{p}$, we will also use the notation $\lambda_p$ instead of $\lambda_{\psi}$. Now, let $\phi:[v_{\ast}, + \infty) \to \mathbb{R}_+$, and assume that for every $x \in \mathbb{R}^d$,
\begin{align}
\label{hyp:controle_coefficients_Talay}
\mathfrak{B}(\phi) \; \equiv   \; \vert b(x) \vert^2 + \Tr[ \sigma \sigma^{T}(x) ] +\vert D \sigma( x ) \vert^2   \Tr[ \sigma \sigma^{T}(x) ]+ \vert \tilde{\sigma}(x) \vert^2 + \vert Ab(x) \vert^2 \leqslant C  \phi \circ V (x).
\end{align}

We are now able to introduce the $\LL_p$ mean-reverting property of $V$. Let $p \geqslant 0$. Let $\beta \in \mathbb{R}$, $\alpha>0$. We assume that $\liminf\limits_{y \to \infty} \phi(y)>\beta/\alpha$ and 
\begin{align}
\label{hyp:recursive_control_param_Talay}
\mathcal{R}_p(\alpha,\beta, \phi, V) \quad \equiv \qquad \forall x \in \mathbb{R}^d,  \quad  \langle \nabla V(x), b(x) \rangle+   \frac{1}{2} \chi_{p}(x) \leqslant \beta - \alpha \phi \circ V (x),
\end{align}
with

\begin{equation}
\label{hyp:recursive_control_param_terme_ordre_sup_Talay}
\chi_{p}(x) =  \left\{
      \begin{aligned}
        & \Vert \lambda_{1} \Vert_{\infty}  \mbox{Tr}[\sigma \sigma^{T}(x)]  & & \quad \mbox{if } p \leqslant 1\\
        & \Vert \lambda_{p} \Vert_{\infty} 2^{(2p-3)_+} \mbox{Tr}[\sigma \sigma^{T}(x)]  &   & \quad \mbox{if } p >1.
      \end{aligned}
    \right.
\end{equation}

Finally we consider the linear operator $\mathfrak{M}_1$ defined on $\mathcal{C}^4(\mathbb{R}^d;\mathbb{R})$ such that for every $f \in \mathcal{C}^4(\mathbb{R}^d;\mathbb{R})$,


\begin{align}
\label{eq:def_M1}
\mathfrak{M}_1f (x) =& - \frac{1}{2} \big(Df(x);Ab(x) \big) \\
& - \mathbb{E} \Big[  \frac{1}{2} \big(D^2f(x);   b(x)^{\otimes 2} + 2 b(x) \otimes (D \sigma(x); \sigma(x) \mathcal{W}^{T} )+(D \sigma(x); \sigma(x) \mathcal{W}^{T} )^{\otimes 2} \big)  \nonumber  \\
&+ \frac{1}{2} \big(D^3f(x); (\sigma(x)U)^{\otimes 2} \otimes (b(x)+(D \sigma(x); \sigma(x) \mathcal{W}^{T} ))+ (\sigma(x)U) \otimes (\tilde{\sigma}(x)U)\big)  \nonumber \\
&+ \frac{1}{4!} \big(D^4f(x); (\sigma(x)U)^{\otimes 4}  \big) \Big] . \nonumber
\end{align}

We also consider the linear operator $\mathfrak{M}_2$ defined on $\mathcal{C}^6(\mathbb{R}^d;\mathbb{R})$ such that for every $f \in \mathcal{C}^6(\mathbb{R}^d;\mathbb{R})$, $\mathfrak{M}_2f=\tilde{\mathfrak{M}}_2f - \frac{1}{2} \mathfrak{M}_1Af$ with


\begin{align}
\label{eq:def_M2_tilde}
\tilde{\mathfrak{M}}_2f (x) =&- \mathbb{E} \Big[ \big( D^2f(x);  \frac{1}{2} (\tilde{\sigma}(x) U)^{\otimes 2})+\frac{1}{2} b(x)\otimes Ab(x) \big) \\
&+ \frac{1}{2} \big(D^3f(x);\frac{1}{3}(D \sigma(x); \sigma(x) \mathcal{W}^{T} )^{\otimes 3}+ b(x)^{\otimes 2} \otimes (D \sigma(x); \sigma(x) \mathcal{W}^{T} ) +\frac{1}{2} (\sigma(x)U)^{\otimes 2}\otimes Ab(x)   \nonumber  \\
& \qquad \qquad \qquad \qquad + (\sigma(x)U) \otimes (b(x)+  (D \sigma(x); \sigma(x) \mathcal{W}^{T} ))\otimes (\tilde{\sigma}(x)U) + \frac{1}{3}b(x)^{\otimes 3} \big)   \nonumber  \\
&+ \frac{1}{2} \big( D^4f(x); \frac{1}{2}(\sigma(x)U)^{\otimes 2} \otimes (b(x)^{\otimes 2}+2 b(x) \otimes (D \sigma(x); \sigma(x) \mathcal{W}^{T} )  + (D \sigma(x); \sigma(x) \mathcal{W}^{T} )^{\otimes 2}  )  \nonumber\\
& \qquad \qquad \qquad \qquad  + \frac{1}{3} (\sigma(x)U)^{\otimes 3}\otimes (\tilde{\sigma}(x)U) \big) \nonumber   \\
&+ \frac{1}{4!} \big( D^5f(x);(\sigma(x)U)^{\otimes 4} \otimes (b(x)+(D \sigma(x); \sigma(x) \mathcal{W}^{T} )) \big)  \nonumber  \\
&+ \frac{1}{6!} \big( D^6f(x);(\sigma(x)U)^{\otimes 6} \big)\Big] . \nonumber
\end{align}

We are now in a position to provide our main result concerning convergence of weighted empirical measures of the Talay's scheme. This first part of this result concerns the $\mathbb{P}-a.s.$ weak convergence while the second part establishes first and second order CLT.
\begin{mytheo}
\label{th:cv_was_Talay}
Let $p >0,a \in (0,1]$, $s \geqslant 1, \rho \in [1,2]$ and, $\psi_p(y)=y^p$, $\phi(y)=y^a$ and $\epsilon_{\mathcal{I}}(\gamma)=\gamma^{\rho/2}$. Let $\alpha>0$ and $\beta \in \mathbb{R}$. 
 \begin{enumerate}[label=\textbf{\Alph*.}]
 \item \label{th:cv_was_Talay_point_A}
Assume that the sequence $(U_n)_{n \in \mathbb{N}^{\ast}}$ satisfies $M_{\mathcal{N},2}(U)$ (see (\ref{hyp:matching_normal_moment_ordre_q_va_schema_Talay})) and $M_{(2p) \vee (2p \rho/s) \vee 2}(U)$ (see (\ref{hyp:moment_ordre_p_va_schema_Talay})). 
Also assume that (\ref{hyp:Lyapunov_control_Talay}), $\mathfrak{B}(\phi)$ (see (\ref{hyp:controle_coefficients_Talay})), $\mathcal{R}_p(\alpha,\beta, \phi, V)$ (see (\ref{hyp:recursive_control_param_Talay})), $\mbox{L}_{V}$ (see (\ref{hyp:Lyapunov}), $\mathcal{S}\mathcal{W}_{\mathcal{I}, \gamma,\eta}(\rho, \epsilon_{\mathcal{I}})$ (see (\ref{hyp:step_weight_I})), $\mathcal{S}\mathcal{W}_{\mathcal{II},\gamma,\eta}(V^{p/s}) $ (see (\ref{hyp:step_weight_I_gen_tens})) and (\ref{hyp:accroiss_sw_series_2}) also hold and that $ap\rho/s \leqslant p+a-1$.\\

 Then, if $p/s+a-1>0$, $(\nu_n^{\eta})_{n \in \mathbb{N}^{\ast}}$ is $\mathbb{P}-a.s.$ tight and
\begin{align}
 \label{eq:tightness_Talay}
\mathbb{P} \mbox{-a.s.} \quad \sup_{n \in \mathbb{N}^{\ast}} \nu_n^{\eta}( V^{p/s+a-1} ) < + \infty .
\end{align}

Moreover, assume also that $b$, $\sigma$, $\vert D\sigma \vert \Tr[\sigma \sigma^{T}]^{1/2}$, $\tilde{\sigma}$ and $Ab$ have sublinear growth and that $g_{\sigma}\leqslant C  V^{p/s+a-1}$, with $g_{\sigma}=\Tr[ \sigma \sigma^{T} ]+ \vert D\sigma \vert \Tr[\sigma \sigma^{T}]^{1/2}+\Tr[ \tilde{\sigma} \tilde{\sigma}^{T} ]^{1/2} $. Then, every weak limiting distribution $\nu$ of $(\nu_n^{\eta})_{n \in \mathbb{N}^{\ast}}$ is an invariant distribution of $(X_t)_{t \geqslant 0}$ and when $\nu$ is unique, we have
\begin{align}
\label{eq:cv_was_Talay}
\mathbb{P} \mbox{-a.s.} \quad  \forall f \in \mathcal{C}_{\tilde{V}_{\psi_p,\phi,s}}(\mathbb{R}^d), \quad \lim\limits_{n \to + \infty} \nu_n^{\eta}(f)=\nu(f),
\end{align}
 with $\mathcal{C}_{\tilde{V}_{\psi_p,\phi,s}}(\mathbb{R}^d)$ defined in (\ref{def:espace_test_function_cv}). Notice that when $p/s \leqslant p\vee 1 +a-1$, the assumption $\mathcal{S}\mathcal{W}_{\mathcal{II},\gamma,\eta}(V^{p/s}) $ (see (\ref{hyp:step_weight_I_gen_tens})) can be replaced by $\mathcal{S}\mathcal{W}_{\mathcal{II},\gamma,\eta} $ (see (\ref{hyp:step_weight_II})). \\

\item \label{th:cv_was_Talay_point_B} 
 Let $q \in \{1,2\}$, let $\tilde{\rho}_q \in [1,2]$, let $C_{\gamma,\eta}>0$ and let us define $\eta_{1,n}=C_{\gamma,\eta}\gamma_n$, $\eta_{2,n+1}=C_{\gamma,\eta}(\gamma_n+\gamma_{n+1})/2$, $n \in \mathbb{N}^{\ast}$ (with $\gamma_0=0$) and 
 \begin{align*}
 F_q = \{f \in \mathcal{C}^{2(q+1)}(\mathbb{R}^d;\mathbb{R}), \forall l\in \{1,\ldots,2(q+1)\},  D^l f \in \mathcal{C}_0(\mathbb{R}^d;\mathbb{R}) , Af \in F_1 \; \mbox{if } \;q=2 \}.
 \end{align*}
  Finally let $\tilde{\eta}_q(\gamma)=\gamma^{q+1}$.\\
  
Assume that the sequence $(U_n)_{n \in \mathbb{N}^{\ast}}$ satisfies $M_{\mathcal{N},2q+1}(U)$ (see (\ref{hyp:matching_normal_moment_ordre_q_va_schema_Talay})) and $M_{q+1}(U)$ (see (\ref{hyp:moment_ordre_p_va_schema_Talay})) and that $\mathcal{S}\mathcal{W}_{\mathcal{G}\mathcal{C}, \gamma}( \tilde{\rho}_q , \gamma,\gamma)$  (see (\ref{hyp:step_weight_I_gen_chow_rate_sans_g})) holds. \\
Also assume that $g_{\sigma,q}\leqslant C  V^{p/s+a-1}$, with $g_{\sigma,q}=\Tr[ \sigma \sigma^{T} ]^{2(q+1)}+\vert b \vert^{q+1}+ \vert D\sigma \vert^{q+1} \Tr[\sigma \sigma^{T}]^{(q+1)/2}+\Tr[ \tilde{\sigma} \tilde{\sigma}^{T} ] +\vert Ab \vert^q$, that $\Tr[ \sigma \sigma^{T} ]=o_{\vert x \vert \to +\infty}(V^{p/s+a-1})$, that $\nu$ is unique and that (\ref{eq:weight_def}) and the hypotheses from point \ref{th:cv_was_Talay_point_A} hold with $\eta$ replaced by $\tilde{\eta}_q$ and by $\gamma$. Finally assume that for every $f \in F_q$, $\vert \sigma^{T} Df \vert^2 \in \mathcal{C}_{\tilde{V}_{\psi_p,\phi,s}}(\mathbb{R}^d)$ and $\mathfrak{M}_q f \in \mathcal{C}_{\tilde{V}_{\psi_p,\phi,s}}(\mathbb{R}^d)$. \\

Then, for every $f \in F_q$, we have

\begin{enumerate}[label=\textbf{\roman*.}]
\item If $\lim_{n \to \infty} \sqrt{ \Gamma_{n} }  /  H_{\tilde{\eta}_{q},n} =+\infty$, ,
\begin{align*}
\lim_{n \to \infty}  \sqrt{ \Gamma_{n} }  \nu ^{\eta_q}_n(Af) \overset{law}{=} \mathcal{N}(0, \nu( \vert \sigma^{T} Df \vert^2 )).
\end{align*}
\item \label{eq:TCL_mes_pond_moy_var} 
If $\lim_{n \to \infty}   \sqrt{ \Gamma_{n} }  /   H_{\tilde{\eta}_{q},n}  =\hat{l} \in \mathbb{R}^{\ast}_+$, 
\begin{align*}
\lim_{n \to \infty} \sqrt{ \Gamma_{n} }  \nu ^{\eta_q}_n(A f) \overset{law}{=} \mathcal{N}(\hat{l}^{-1} \nu(\mathfrak{M}_q f), \nu( \vert \sigma^{T} Df \vert^2 )).
\end{align*}

\item If $\lim_{n \to \infty}   \sqrt{ \Gamma_{n} }  /   H_{\tilde{\eta}_{q},n}=0$,
\begin{align*}
\lim_{n \to \infty} \frac{H_n}{ H_{\tilde{\eta}_{q},n}} \nu ^{\eta_q}_n(Af) \overset{\mathbb{P}}{=} \nu(\mathfrak{M}_q f)
\end{align*}
\end{enumerate}
\end{enumerate}

\end{mytheo}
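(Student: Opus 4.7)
The plan is to derive both parts of the theorem as concrete applications of the abstract framework: Theorems \ref{th:tightness} and \ref{th:identification_limit} for Part \ref{th:cv_was_Talay_point_A}, and Theorems \ref{th:conv_gnl_ordre_1} (for $q=1$) and \ref{th:conv_gnl_ordre_2} (for $q=2$) for Part \ref{th:cv_was_Talay_point_B}. Since the Talay scheme \eqref{eq:Talay_scheme} is an explicit polynomial in $\gamma_{n+1}^{1/2}$ of the elementary increments $\Delta \overline{X}^i_{n+1}$ from \eqref{def:incr_Talay}, every abstract hypothesis will reduce to a Taylor expansion of $\psi \circ V$ or of $f$ in these increments, combined with moment computations for $U_{n+1}$ and $\mathcal{W}_{n+1}$.

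For Part \ref{th:cv_was_Talay_point_A}, I would first verify the recursive control $\mathcal{RC}_{Q,V}(\psi_p,\phi,\alpha,\beta)$ by a second-order Taylor expansion of $\psi_p \circ V(\overline{X}_{\Gamma_n} + \Delta\overline{X}_{n+1})$. Using $M_{\mathcal{N},2}(U)$ and the centering of $\mathcal{W}_{n+1}$, the leading $\gamma_{n+1}$ contribution reduces to $\psi_p'(V)\bigl(\langle \nabla V, b\rangle + \tfrac{1}{2}\Tr[\sigma\sigma^\ast D^2 V]\bigr) + \psi_p''(V)|\sigma^\ast \nabla V|^2$, which \eqref{hyp:Lyapunov_control_Talay} together with $\mathcal{R}_p(\alpha,\beta,\phi,V)$ bounds above by $\psi_p(V)/V \cdot (\beta-\alpha\phi\circ V)$; the $\gamma_{n+1}^{3/2}$ and $\gamma_{n+1}^2$ contributions coming from $\Delta\overline{X}^{3,4,5}_{n+1}$ are absorbed for $\gamma_{n+1}$ small via $\mathfrak{B}(\phi)$ and the moment assumption $M_{(2p)\vee(2p\rho/s)\vee 2}(U)$. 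The growth condition $\mathcal{GC}_Q((\psi_p\circ V)^{1/s}, g, \rho, \epsilon_{\mathcal{I}})$ follows by similar moment estimates with $g \leq CV^{p/s+a-1}$, and the step weight conditions are either assumed directly or supplied by Lemma \ref{lemme:mom_V}; Theorem \ref{th:tightness} then yields \eqref{eq:tightness_Talay}. The infinitesimal generator approximation $\mathcal{E}(\widetilde{A},A,\DomA_0)$ with $\DomA_0 = \mathcal{C}^2_K(\mathbb{R}^d)$ is obtained from a short Taylor expansion exploiting the compact support of derivatives of $f$ and the sublinear growth of $b, \sigma, \tilde\sigma, Ab$, after which Theorem \ref{th:identification_limit} delivers \eqref{eq:cv_was_Talay}.

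For Part \ref{th:cv_was_Talay_point_B}, the main task is verifying the weak error assumption $\mathcal{E}_q(F_q,\widetilde{A},A,\mathfrak{M}_q,\tilde\eta_q)$ with $\tilde\eta_q(\gamma)=\gamma^{q+1}$. I would expand $\mathscr{Q}_\gamma f(x)$ by Taylor's formula of order $2(q+1)$ in the Talay increment, and use the matching-moment assumption $M_{\mathcal{N},2q+1}(U)$ together with the explicit law \eqref{def:aire_de_Levy} of $\mathcal{W}$ to cancel every half-integer power of $\gamma$ and to identify the coefficient of $\gamma^i/i!$ ($i\leq q$) with $A^i f(x)$. After the algebraic rearrangement prescribed by the definition of $\mathcal{E}_q$ (which for $q=2$ amounts to $\mathcal{R}_2 f - (\gamma/2)\mathcal{R}_1 Af$), the surviving $\gamma^{q+1}$-term reorganises into $\gamma^{q+1}\mathfrak{M}_q f(x)$ as in \eqref{eq:def_M1} and \eqref{eq:def_M2_tilde}, while the remainder admits a $\Lambda$-representation with $g \leq C V^{p/s+a-1}$ satisfying \ref{hyp:erreur_tems_cours_fonction_test_reg_Lambda_representation_1} thanks to uniform continuity of $D^{2(q+1)}f$ on compacts. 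I would then verify $\mathcal{GC}_{Q,q}$ by identifying the martingale increment as $\mathscr{X}_{f,n+1} = \sqrt{\gamma_{n+1}}\,\langle \sigma(\overline{X}_{\Gamma_n})^\ast \nabla f(\overline{X}_{\Gamma_n}), U_{n+1}\rangle$, whose conditional variance is $\gamma_{n+1}|\sigma^\ast \nabla f|^2(\overline{X}_{\Gamma_n})$: this reads $\epsilon_{\mathscr{X}}(\gamma)=\gamma$, $\mathfrak{V}f=|\sigma^\ast \nabla f|^2$ and hence $\sqrt{H_{\epsilon_{\mathscr{X}},n}}=\sqrt{\Gamma_n}$ as appearing in the CLT statements. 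The $a.s.$ limit $\nu_n^{\epsilon_{\mathscr{X}}}(\mathfrak{V}f) \to \nu(\mathfrak{V}f)$ is delivered by Part \ref{th:cv_was_Talay_point_A} applied to $|\sigma^\ast \nabla f|^2 \in \mathcal{C}_{\tilde V_{\psi_p,\phi,s}}(\mathbb{R}^d)$, the Lindeberg condition \eqref{hyp:Lindeberg_CLT_scheme} follows from $M_{q+1}(U)$ and $\Gamma_n \to \infty$, and the $\mbox{L}^{\tilde\rho_q}$ control of the martingale remainder uses $g_{\sigma,q} \leq CV^{p/s+a-1}$. Theorems \ref{th:conv_gnl_ordre_1} and \ref{th:conv_gnl_ordre_2} then conclude.

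The main obstacle is the $q=2$ weak-error expansion: collecting the $\gamma^3$-coefficients arising from all products of the five increments $\Delta\overline{X}^i_{n+1}$ in the sixth-order Taylor expansion of $f$ is long, and one must carefully exploit both the matching moments $M_{\mathcal{N},5}(U)$ and the Lévy-area-mimicking structure \eqref{def:aire_de_Levy} of $\mathcal{W}_n$ to ensure (i) the cancellation of all half-integer powers of $\gamma$ and (ii) that the surviving integer-power coefficients rearrange exactly into $A$, $A^2/2$, and the residual $\mathfrak{M}_2 f = \mathfrak{M}_1 Af + \tilde{\mathfrak{M}}_2 f$. This is precisely the algebraic identity that underlies the weak order two property of the Talay scheme and that distinguishes it from the Euler scheme of the previous example.
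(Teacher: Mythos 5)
Your proposal is correct and follows essentially the same route as the paper: recursive control via a Taylor expansion of $\psi_p\circ V$ (Proposition \ref{prop:recursive_control_Talay}), growth control and step-weight verifications (Lemmas \ref{lemme:incr_lyapunov_X_Talay}, \ref{lemme:mom_V}) feeding Theorems \ref{th:tightness} and \ref{th:identification_limit} for point \ref{th:cv_was_Talay_point_A}, and the weak-error expansion $\mathcal{E}_q$ with the martingale increment $\mathscr{X}_{f,n+1}=\sqrt{\gamma_{n+1}}(Df;\sigma U_{n+1})$, $\mathfrak{V}f=\vert\sigma^{\ast}Df\vert^2$, feeding Theorems \ref{th:conv_gnl_ordre_1} and \ref{th:conv_gnl_ordre_2} for point \ref{th:cv_was_Talay_point_B}, exactly as in Proposition \ref{prop:Talay_infinitesimal_approx} and Lemma \ref{lemme:incr_lyapunov_X_Talay_TCL}. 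The only cosmetic deviation is that you argue the Lindeberg condition directly from the moment assumption, whereas the paper invokes the corresponding result of \cite{Lamberton_Pages_2002}; this does not change the argument.
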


\begin{remark}
\label{rmk:rate_Talay}
Notice that if we take $\gamma_n=1/n^{\xi}$, $\xi \in (0,1/(q+1))$, the mentioned step weight assumptions of Theorem \ref{th:cv_was_Talay} point \ref{th:cv_was_Talay_point_B} are satisfied (take $\rho \in (1/(1-\xi),2]$ and $\tilde{\rho}_q \in(2/(1+\xi),2]$). Then, if we define by

 \begin{eqnarray*}
\forall n \in \mathbb{N}^{\ast}, \qquad \mathfrak{r}_{q,n}= 
& \quad  \left\{
    \begin{array}{l}
    \sqrt{ \Gamma_{n} }  \quad \mbox{if} \quad \lim_{n \to \infty}   \sqrt{ \Gamma_{n} }  /   H_{\tilde{\eta}_{q},n}=+\infty,\\
 \sqrt{ \Gamma_{n} }   \quad \mbox{if} \quad   \lim_{n \to \infty} \sqrt{ \Gamma_{n} }  /   H_{\tilde{\eta}_{q},n}  =\hat{l} ,\\
 \frac{H_n}{ H_{\tilde{\eta}_{q},n}} \quad \mbox{if} \quad \lim_{n \to \infty}   \sqrt{ \Gamma_{n} }  /   H_{\tilde{\eta}_{q},n}=0,
    \end{array}
\right.
\end{eqnarray*}
the rate of convergence of $(\nu ^{\eta_q}_n(A f))_{n \in \mathbb{N}^{\ast}}$, we have 
\begin{align*}
\mathfrak{r}_{q,n} \underset{n \to + \infty}{\sim} C n^{(q\xi) \wedge (1/2-\xi/2)}.
\end{align*}
The highest rate of convergence is thus achieved for $\xi=1/(2q+1)$ and is given by $\mathfrak{r}_{q,n} \underset{n \to + \infty}{\sim} C n^{q/(2q+1)}$. In particular in the first order case ($q=1$) we have $\mathfrak{r}_{1,n} \underset{n \to + \infty}{\sim} C n^{1/3}$ which is, as expected, the same rate as for the Euler scheme (see Remark \ref{remark:ordre_cv_1_appli}). However, for the second order case ($q=2$) we obtain a faster rate of convergence since $\mathfrak{r}_{2,n} \underset{n \to + \infty}{\sim} C n^{2/5}$. This rate can be achieved because $(\overline{X}_{\Gamma_n})_{n \in \mathbb{N}}$ is a second weak order scheme but also because the step sequence  $(\eta_{2,n})_{n \in \mathbb{N}^{\ast}}$ is well chosen.\\
\end{remark}

The next part of this Section is dedicated to the proof of Theorem \ref{th:cv_was_Talay}.

\subsection{Recursive control}

\begin{myprop}
\label{prop:recursive_control_Talay}
Let $v_{\ast}>0$, and let $\phi:[v_{\ast},\infty )\to \mathbb{R}_+$ be a continuous function such that $C_{\phi}:= \sup_{y \in [v_{\ast},\infty )}\phi(y)/y<+ \infty$.  Now let $p >0$ and define $\psi_p(y)=y^p$. Let $\alpha>0$ and $\beta \in \mathbb{R}$. \\

Assume that $(U_n)_{n \in \mathbb{N}^{\ast}} $ is a sequence of independent random variables such that $U$ satisfies $M_{\mathcal{N},2}(U) $ (see (\ref{hyp:matching_normal_moment_ordre_q_va_schema_Talay})) and $M_{(2p) \vee 2} (U)$ (see (\ref{hyp:moment_ordre_p_va_schema_Talay})).  \\
Also assume that (\ref{hyp:Lyapunov_control_Talay}), $\mathfrak{B}(\phi)$ (see (\ref{hyp:controle_coefficients_Talay})), $\mathcal{R}_p(\alpha,\beta, \phi, V)$ (see (\ref{hyp:recursive_control_param_Talay})), are satisfied. \\

Then, for every $\tilde{\alpha}\in (0, \alpha)$, there exists $n_0 \in \mathbb{N}^{\ast}$, such that 

\begin{align}
\label{eq:recursive_control_Talayt_fonction_pol}
 \forall n   \geqslant n_0, \forall x \in \mathbb{R}^d,  \quad\tilde{A}_{\gamma_n} \psi_{p} \circ V(x)\leqslant \frac{ \psi_p \circ V(x)}{V(x)}p(\beta - \tilde{\alpha} \phi\circ V(x)). 
\end{align}

Then $\mathcal{RC}_{Q,V}(\psi,\phi,p\tilde{\alpha},p\beta)$ (see (\ref{hyp:incr_sg_Lyapunov})) holds for every $\tilde{\alpha}\in (0, \alpha)$ such that $\liminf\limits_{y \to + \infty} \phi(y) > \beta / \tilde{\alpha}$. Moreover, when $\phi=Id$ we have

\begin{align}
\label{eq:mom_pol_Talay}
\sup_{n \in \mathbb{N}} \mathbb{E}[V^p(\overline{X}_{\Gamma_{n}})] < + \infty.
\end{align}
\end{myprop}

\begin{proof} 
We distinguish the cases $p \geqslant 1$ and $p \in(0,1)$.
\paragraph{Case $p\geqslant 1$. }
 First, we focus on the case $p \geqslant 1$.  From the Taylor's formula and the definition of $\lambda_{\psi_p}=\lambda_p$ (see (\ref{def:lambda_psi_Talay})), we have

\begin{align}
\label{eq:taylor_preuve_RC_pol_Talay}
\psi_{p} \circ V(\overline{X}_{\Gamma_{n+1}})=& \psi_{p} \circ V(\overline{X}_{\Gamma_n})+ \langle \overline{X}_{\Gamma_{n+1}}-\overline{X}_{\Gamma_n}, \nabla V(\overline{X}_{\Gamma_n}) \rangle \psi_{p}'\circ V(\overline{X}_{\Gamma_n}) \nonumber \\
&+ \frac{1}{2} \big( D^2 V(\Upsilon_{n+1}) \psi_{p}' \circ  V(\Upsilon_{n+1})+\nabla V (\Upsilon_{n+1} )^{\otimes 2} \psi_{p}'' \circ V(\Upsilon_{n+1}) ;
 ( \overline{X}_{\Gamma_{n+1}}-\overline{X}_{\Gamma_n} )^{\otimes 2} \big) \nonumber \\
 \leqslant & \psi_{p} \circ V(\overline{X}_{\Gamma_n})+ \langle \overline{X}_{\Gamma_{n+1}}-\overline{X}_{\Gamma_n}, \nabla V(\overline{X}_{\Gamma_n}) \rangle \psi_{p}'\circ V(\overline{X}_{\Gamma_n})  \nonumber \\
&+ \frac{1}{2} \lambda_{p} (\Upsilon_{n+1} )  \psi_{p}'\circ V(\Upsilon_{n+1}) \vert  \overline{X}_{\Gamma_{n+1}}-\overline{X}_{\Gamma_n}  \vert^{2}. 
\end{align}

with $\Upsilon_{n+1}  \in (\overline{X}_{\Gamma_n}, \overline{X}_{\Gamma_{n+1}})$. First, from (\ref{hyp:Lyapunov_control_Talay}), we have $\sup_{x \in \mathbb{R}^d} \lambda_{p} (x)  < + \infty$.

 Since $U$ and $\mathcal{W}$ are made of centered random variables, we deduce from $M_{\mathcal{N},2}(U) $ (see (\ref{hyp:matching_normal_moment_ordre_q_va_schema_Talay})) and $M_{4} (U)$ (see (\ref{hyp:moment_ordre_p_va_schema_Talay})) that


\begin{align*}
& \mathbb{E}[\overline{X}_{\Gamma_{n+1}}-\overline{X}_{\Gamma_n} \vert \overline{X}_{\Gamma_n} ]= \gamma_{n+1}  b(\overline{X}_{\Gamma_n} ) +  \gamma_{n+1}^{2} \frac{1}{2} Ab( \overline{X}_{\Gamma_n} )\\
& \mathbb{E}[ \vert  \overline{X}_{\Gamma_{n+1}}-\overline{X}_{\Gamma_n} \vert^{2} \vert \overline{X}_{\Gamma_n} ]\leqslant \gamma_{n+1}\mbox{Tr}[\sigma \sigma^{T}(\overline{X}_{\Gamma_n} )]  + \gamma_{n+1}^{3/2} C \Big(\mbox{Tr}[\sigma \sigma^{T}(\overline{X}_{\Gamma_n} )] + \vert b(\overline{X}_{\Gamma_n} ) \vert^{2}\\
& \qquad \qquad \qquad \qquad \qquad  \qquad  +\vert D \sigma( \overline{X}_{\Gamma_n} ) \vert^2   \Tr[ \sigma \sigma^{T}(\overline{X}_{\Gamma_n}) ]  +   \vert \tilde{\sigma}(x) \vert^2 + \vert Ab(x) \vert^2 \Big),
\end{align*}

with $C$ a positive constant. Assume first that $p=1$. Using $\mathfrak{B}(\phi)$ (see (\ref{hyp:controle_coefficients_Talay})), for every $\tilde{\alpha} \in (0, \alpha)$, there exists $n_0(\tilde{\alpha})$ such that for every $n\geqslant n_0(\tilde{\alpha})$,


\begin{align}
\label{eq:recursive_control_remaider_Id_Talay}
 \gamma_{n+1}^{2} \frac{1}{2}Ab( \overline{X}_{\Gamma_n} ) +  &\frac{1}{2}\Vert \lambda_{1} \Vert_{\infty}    \gamma_{n+1}^{3/2} C \Big(\mbox{Tr}[\sigma \sigma^{T}(\overline{X}_{\Gamma_n} )] + \vert b(\overline{X}_{\Gamma_n} ) \vert^{2}\\
&  +\vert D \sigma( \overline{X}_{\Gamma_n} ) \vert^2   \Tr[ \sigma \sigma^{T}(\overline{X}_{\Gamma_n}) ]  +   \vert \tilde{\sigma}(x) \vert^2 + \vert Ab(x) \vert^2 \Big) \leqslant \gamma_{n+1}(\alpha- \tilde{\alpha})\phi \circ V(\overline{X}_{\Gamma_n} ).  \nonumber 
\end{align}

From assumption $\mathcal{R}_p(\alpha,\beta, \phi, V)$ (see (\ref{hyp:recursive_control_param_Talay}) and (\ref{hyp:recursive_control_param_terme_ordre_sup_Talay})), we conclude that
\begin{align*}
\tilde{A}_{\gamma_n} \psi_1 \circ V(x) \leqslant \beta - \tilde{\alpha} \phi \circ V(x)
\end{align*}
Assume now that $p>1$.Since $\vert \nabla V \vert \leqslant C_V V$ (see (\ref{hyp:Lyapunov_control_Talay})), then $\sqrt{V}$ is Lipschitz. Now, we use the following inequality: Let $l \in \mathbb{N}^{\ast}$. We have

\begin{align*}
\forall \alpha >0 ,\forall u_i \in \mathbb{R}^d, i=1,\ldots,l, \qquad  \big \vert  \sum_{i=1}^l u_i \big \vert^{\alpha} \leqslant l^{(\alpha-1)_+}  \sum_{i=1}^l \vert u_i  \vert^{\alpha} .
\end{align*}
\begin{align*}
V^{p-1} (\Upsilon_{n+1} )  \leqslant & \big( \sqrt{V}(\overline{X}_{\Gamma_n})+[\sqrt{V}]_1 \vert \overline{X}_{\Gamma_{n+1}}-\overline{X}_{\Gamma_n} \vert \big)^{2p-2} \\
\leqslant & 2^{(2p-3)_+} (V^{p-1}(\overline{X}_{\Gamma_n}) + [\sqrt{V}]_1^{2p-2} \vert \overline{X}_{\Gamma_{n+1}}-\overline{X}_{\Gamma_n} \vert^{2p-2})
\end{align*}
To study the `remainder' of (\ref{eq:taylor_preuve_RC_pol_Talay}), we multiply the above inequality by $\vert \overline{X}_{\Gamma_{n+1}}-\overline{X}_{\Gamma_n} \vert^{2}$. First, we study the second term which appears in the $r.h.s.$ and using $\mathfrak{B}(\phi)$ (see (\ref{hyp:controle_coefficients_Talay})), for every $p \geqslant 1$,
\begin{align*}
\vert  \overline{X}_{\Gamma_{n+1}}- \overline{X}_{\Gamma_n} \vert^{2p} \leqslant C \gamma_{n+1}^p \phi \circ V(\overline{X}_{\Gamma_{n}})^p(1+\vert U_{n+1} \vert^{4p}).
\end{align*}

 Let $\hat{\alpha} \in (0,\alpha)$. Then, we deduce from $M_{2p } (U)$ (see (\ref{hyp:moment_ordre_p_va_schema_Talay})) that there exists $n_0(\hat{\alpha}) \in \mathbb{N}$ such that for any $n \geqslant n_0(\hat{\alpha})$, we have
\begin{align*}
 \mathbb{E}[ \vert \overline{X}_{\Gamma_{n+1}}- \overline{X}_{\Gamma_n} \vert^{2p} \vert \overline{X}_{\Gamma_n} ] \leqslant \gamma_{n+1} \phi \circ V (\overline{X}_{\Gamma_{n}})^{p} \frac{\alpha- \hat{\alpha} }{\Vert \phi /I_d \Vert_{\infty}^{p-1} \Vert \lambda_{p} \Vert_{\infty} 2^{(2p-3)_+} [\sqrt{V}]_1^{2p-2} } 
\end{align*}

To treat the other term of the `remainder' of (\ref{eq:taylor_preuve_RC_pol_Talay}) we proceed as in (\ref{eq:recursive_control_remaider_Id_Talay}) with $\Vert \lambda_{1} \Vert_{\infty}$ replaced by $\Vert \lambda_{p} \Vert_{\infty} 2^{2p-3} [\sqrt{V}]_1^{2p-2}  $, $\alpha$ replace by $ \hat{\alpha}$ and $\tilde{\alpha} \in (0, \hat{\alpha})$. We gather all the terms together and using (\ref{hyp:recursive_control_param_terme_ordre_sup_Talay}), for every $n \geqslant n_0(\tilde{\alpha}) \vee n_0(\hat{\alpha}) $, we obtain

\begin{align*}
\mathbb{E}[V^p (\overline{X}_{\Gamma_{n+1}})- V^p(\overline{X}_{\Gamma_{n}}) \vert \overline{X}_{\Gamma_{n}}] \leqslant & \gamma_{n+1}p V^{p-1}(\overline{X}_{\Gamma_{n}})( \beta - \alpha \phi \circ V (\overline{X}_{\Gamma_{n}})  ) \\
& + \gamma_{n+1}p V^{p-1}(\overline{X}_{\Gamma_{n}}) \Big(  \phi \circ V (\overline{X}_{\Gamma_{n}}) (\hat{\alpha} -\tilde{\alpha}  ) \\
& \qquad \qquad \qquad \qquad + (\alpha-\hat{\alpha})  \frac{ V^{1-p}(\overline{X}_{\Gamma_{n}})  \phi \circ V (\overline{X}_{\Gamma_{n}})^{p} }{\Vert \phi /I_d \Vert_{\infty}^{p-1}}    \Big) \\
\leqslant&\gamma_{n+1} V^{p-1}(\overline{X}_{\Gamma_{n}})( \beta p- \tilde{\alpha} p  \phi \circ V (\overline{X}_{\Gamma_{n}})  ). \\
\end{align*}
which is exactly the recursive control for $p>1$. 

\paragraph{Case $p\in(0,1)$. }
Now, let $p \in (0,1)$ so that $x \mapsto x^{p}$ is concave. it follows that
\begin{align*}
V^{p}(\overline{X}_{\Gamma_{n+1}} ) - V^{p}(\overline{X}_{\Gamma_{n}} ) \leqslant pV^{p-1}(\overline{X}_{\Gamma_{n}}  ) (V(\overline{X}_{\Gamma_{n+1}} ) - V(\overline{X}_{\Gamma_{n}} ) )
\end{align*}
We have just proved that we have the recursive control $\mathcal{RC}_{Q,V}(\psi , \phi, \alpha, \beta)$ holds for $\psi=I_d$ (with $\alpha$ replaced by $\tilde{\alpha} >0$), and since $V$ takes positive values, we obtain

\begin{align*}
\mathbb{E}[V^{p}(\overline{X}_{\Gamma_{n+1}} ) - V^{p}(\overline{X}_{\Gamma_{n}} )\vert \overline{X}_{\Gamma_{n}} ] \leqslant  & pV^{p-1}(\overline{X}_{\Gamma_{n}}  ) \mathbb{E}[V(\overline{X}_{\Gamma_{n+1}} ) - V(\overline{X}_{\Gamma_{n}} ) \vert \overline{X}_{\Gamma_{n}} ] \\
 \leqslant & \gamma_{n+1} V^{p-1}(\overline{X}_{\Gamma_{n}}  ) (p\beta - p\tilde{\alpha} \phi \circ V (\overline{X}_{\Gamma_{n}}  ) ),
\end{align*}
which completes the proof of (\ref{eq:recursive_control_Talayt_fonction_pol}). The proof of (\ref{eq:mom_pol_Talay}) is an immediate application of Lemma \ref{lemme:mom_psi_V} as soon as we notice that the increments of the Talay's scheme have finite polynomial moments which implies (\ref{eq:mom_psi_V}).

\end{proof}

\subsection{Infinitesimal approximation}

\begin{myprop}
\label{prop:Talay_infinitesimal_approx}
Assume that $b$, $\sigma$, $\vert D\sigma \vert \Tr[\sigma \sigma^{T}]^{1/2}$, $\tilde{\sigma}$ and $Ab$ have sublinear growth. We have the following properties:
\begin{enumerate}[label=\textbf{\Alph*.}] 
\item  \label{prop:Talay_infinitesimal_approx_point_A}
Assume that the sequence $(U_n)_{n \in \mathbb{N}^{\ast}}$ satisfies $M_{\mathcal{N},2}(U)$ (see (\ref{hyp:matching_normal_moment_ordre_q_va_schema_Talay})) and that $\sup_{n \in \mathbb{N}^{\ast}} \nu_n^{\eta}( \Tr[ \sigma \sigma^{T}]  )< + \infty$, $\sup_{n \in \mathbb{N}^{\ast}} \nu_n^{\eta}(  \vert D\sigma \vert \Tr[\sigma \sigma^{T}]^{1/2})< + \infty$ and $\sup_{n \in \mathbb{N}^{\ast}} \nu_n^{\eta}( \Tr[ \tilde{\sigma} \tilde{\sigma}^{T}]^{1/2}  )< + \infty$. \\

Then, $\mathcal{E}(\tilde{A},A,\DomA_0) $ (see (\ref{hyp:erreur_tems_cours_fonction_test_reg})) is satisfied.

\item \label{prop:Talay_infinitesimal_approx_point_B}
Let $F_1 = \{f \in \mathcal{C}^4(\mathbb{R}^d;\mathbb{R}), \forall q\in \{1,\ldots,4\},  D^qf \in \mathcal{C}_0(\mathbb{R}^d;\mathbb{R}) \}$, let $\mathfrak{M}_1$ defined in (\ref{eq:def_M1}) and let $\tilde{\eta}_1(\gamma)=\gamma^2$. \\

Assume that the sequence $(U_n)_{n \in \mathbb{N}^{\ast}}$ satisfies $M_{\mathcal{N},3}(U)$ (see (\ref{hyp:matching_normal_moment_ordre_q_va_schema_Talay})) and $M_2(U)$ (see (\ref{hyp:moment_ordre_p_va_schema_Talay})) and that $\sup_{n \in \mathbb{N}^{\ast}} \nu_n^{\tilde{\eta}_1}(g_1)< + \infty$, with  $g_1: \mathbb{R}^d \to \mathbb{R}$ such that for every $x \in \mathbb{R}^d$, $g_1(x)=\Tr[\sigma \sigma^{T}(x)]^2+\vert b(x) \vert^2+ \vert D \sigma(x)  \vert^2 \Tr[\sigma \sigma^{T}(x)]+\Tr[\tilde{\sigma} \tilde{\sigma}^{T}(x)]+\vert Ab(x) \vert$. Finally assume that $\mathbb{P}-a.s.$, for every $f \in F_1$, $\lim_{n \to \infty}\nu ^{\tilde{\eta}_{1}}_n(\mathfrak{M}_1 f) =\nu(\mathfrak{M}_1 f)$. \\

Then $\mathcal{E}_1(F_1,\tilde{A},A, \mathfrak{M}_1,\tilde{\eta}_1)$ (see (\ref{hyp:rate_erreur_tems_cours_fonction_test_reg})) is satisfied.

\item \label{prop:Talay_infinitesimal_approx_point_C}
Let $F_2 = \{f \in \mathcal{C}^6(\mathbb{R}^d;\mathbb{R}), \forall q\in \{2,\ldots,6\},  D^qf \in \mathcal{C}_0(\mathbb{R}^d;\mathbb{R}), Af \in F_1 \}$, let $\mathfrak{M}_2$ defined in (\ref{eq:def_M2_tilde}) and let $\tilde{\eta}_2(\gamma)=\gamma^3$. \\

Assume that the sequence $(U_n)_{n \in \mathbb{N}^{\ast}}$ satisfies $M_{\mathcal{N},5}(U)$ (see (\ref{hyp:matching_normal_moment_ordre_q_va_schema_Talay})) and $M_3(U)$ (see (\ref{hyp:moment_ordre_p_va_schema_Talay}))  and that $\sup_{n \in \mathbb{N}^{\ast}} \nu_n^{\tilde{\eta}_2}( g_2 )< + \infty$ with $g_2: \mathbb{R}^d \to \mathbb{R}$ such that for every $x \in \mathbb{R}^d$, $ g_2(x) =\Tr[\sigma \sigma^{T}(x)]^3+\vert b(x) \vert^3+ \vert D \sigma(x)  \vert^3 \Tr[\tilde{\sigma} \tilde{\sigma}^{T}(x)]^{3/2} +\Tr[\tilde{\sigma} \tilde{\sigma}^{T}(x)]+\vert Ab(x) \vert^2$. Finally assume that $\mathbb{P}-a.s.$, for every $f \in F_2$, $\lim_{n \to \infty}\nu ^{\tilde{\eta}_{2}}_n(\mathfrak{M}_2 f) = \nu(\mathfrak{M}_2 f)$. \\

Then $\mathcal{E}_2(F_2,\tilde{A},A, \mathfrak{M}_2,\tilde{\eta}_2)$ (see (\ref{hyp:rate_erreur_tems_cours_fonction_test_reg})) is satisfied.

\end{enumerate}
\end{myprop}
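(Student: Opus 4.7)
The three parts share a common skeleton, so the plan is to describe once the master estimate and then indicate how the order of Taylor expansion, the required moment matching, and the target operator change from $q=0$ (part \ref{prop:Talay_infinitesimal_approx_point_A}) to $q=2$ (part \ref{prop:Talay_infinitesimal_approx_point_C}). In every case the idea is to expand $f(\overline{X}_{\Gamma_n}+\Delta \overline{X}_{n+1})-f(\overline{X}_{\Gamma_n})$ by Taylor's formula with integral remainder up to the highest derivative available, then take conditional expectation against the law of $(U_{n+1},\kappa_{n+1})$ and exploit the five-term polynomial structure of $\Delta\overline{X}_{n+1}=\sum_{i=1}^{5}\Delta\overline{X}^{\,i}_{n+1}$ listed in (\ref{def:incr_Talay}). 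The moment matching $M_{\mathcal{N},2q+1}(U)$ forces the Taylor polynomial in $\sqrt{\gamma_{n+1}}$ to coincide, up to order $\gamma_{n+1}^{q+1}$, with the Taylor expansion of $P_{\gamma_{n+1}}f$, the Brownian semigroup acting on $f$; the leftover at order $\gamma_{n+1}^{q+1}$ is to be identified with the operator $\mathfrak{M}_q$ given in the statement, and everything of higher order is absorbed in $\Lambda_{f,q}$.

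For part \ref{prop:Talay_infinitesimal_approx_point_A}, since $\DomA_0=\mathcal{C}^2_K(\mathbb{R}^d)$ I would stop the Taylor expansion at order two and write the remainder as $\int_0^1 (1-t)\bigl(D^2f(\overline{X}_{\Gamma_n}+t\Delta\overline{X}_{n+1})-D^2f(\overline{X}_{\Gamma_n}); \Delta\overline{X}_{n+1}^{\otimes 2}\bigr)dt$. Using $M_{\mathcal{N},2}(U)$ and the fact that $\mathbb{E}[\mathcal{W}_{n+1}]=0$ (because $\mathbb{E}[(U^i)^2]=1$, $\mathbb{E}[U^iU^j]=0$ for $i\neq j$, and $\mathbb{E}[\kappa]=0$), I compute $\mathbb{E}[\Delta\overline{X}_{n+1}\mid \overline{X}_{\Gamma_n}]=\gamma_{n+1}b+\gamma_{n+1}^2 Ab$ and $\mathbb{E}[\Delta\overline{X}_{n+1}^{\otimes 2}\mid \overline{X}_{\Gamma_n}]=\gamma_{n+1}\sigma\sigma^{\ast}+O(\gamma_{n+1}^{3/2})$, which yields $\widetilde A_{\gamma_{n+1}}f=Af+O(\gamma_{n+1}^{1/2})$. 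Taking $\tilde\omega$ to be a uniform modulus of continuity of $D^2f$, the representation of $\Lambda_f$ is obtained with $g=\bigl(\Tr[\sigma\sigma^{\ast}],\,|D\sigma|\Tr[\sigma\sigma^{\ast}]^{1/2},\,\Tr[\tilde\sigma\tilde\sigma^{\ast}]^{1/2}\bigr)$ and $\tilde\Lambda_{f,i}$ of the form $C_f\bigl(\tilde\omega(|\Delta|)+\gamma^{1/2}\bigr)$ evaluated on an auxiliary copy of $(U,\kappa)$ and a uniform variable $t\in[0,1]$ (Remark \ref{rmk:representation_mesure_infinie} is used here). Both alternatives \ref{hyp:erreur_tems_cours_fonction_test_reg_Lambda_representation_cv_1} and \ref{hyp:erreur_temps_cours_fonction_test_reg_Lambda_representation_cv_2} are checked by separating the case where $\Delta\overline{X}_{n+1}$ stays in a compact and the complement, using that $b,\sigma,\ldots,Ab$ have sublinear growth.

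For part \ref{prop:Talay_infinitesimal_approx_point_B}, I push Taylor's expansion to order four (which is legal on $F_1$) and, using $M_{\mathcal{N},3}(U)$, check that every term of the form $\langle D^kf(x),\,\mathbb{E}[\Delta\overline{X}^{\otimes k}]\rangle$ of total weight $\leq \gamma^2$ matches the coefficients of $\frac{\gamma}{1!}A f+\frac{\gamma^2}{2!}A^2f$, \emph{except} for a finite list of `extra' contributions: the mean of $\Delta\overline{X}^5_{n+1}$ paired with $Df$, and cross products at orders $\gamma^2$ involving $\Delta\overline{X}^{2,3,4}$ paired with $D^2f,D^3f,D^4f$. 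Collecting them gives exactly $-\gamma^2\mathfrak{M}_1 f(x)$ as defined by (\ref{eq:def_M1}); the minus sign is the one appearing in (\ref{hyp:rate_erreur_tems_cours_fonction_test_reg}) via $(-1)^{\mathds{1}_{m=q-1}}$. The remainder $\Lambda_{f,1}$ is then the fourth-order Taylor remainder of $f$ tested against $\Delta\overline{X}_{n+1}^{\otimes 4}$, which by $M_2(U)$ is dominated by $C_f g_1(x)\tilde\omega_{D^4 f}(|\Delta|)$ with $g_1$ exactly the function of the statement; its representation and the two alternatives on $\tilde\Lambda_{f,1}$ follow by the same compact vs. non-compact dichotomy as in part \ref{prop:Talay_infinitesimal_approx_point_A}.

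Part \ref{prop:Talay_infinitesimal_approx_point_C} is carried out on the same pattern but is heaviest: Taylor expand $f$ up to order six and $Af$ up to order four, then plug both into the combination $\mathcal{R}_2 f(x,\gamma)-\tfrac{\gamma}{2}\mathcal{R}_1 Af(x,\gamma)$ that appears in $\mathcal{E}_2$. The usage of $M_{\mathcal{N},5}(U)$ cancels every polynomial contribution of order $\leq\gamma^3$ except those corresponding to the Brownian semigroup, the $\mathfrak{M}_1 Af$ piece (from applying part \ref{prop:Talay_infinitesimal_approx_point_B} to $Af$ instead of $f$) and a new batch of $\gamma^3$-terms stemming from: $\Delta\overline{X}^{4,\otimes 2}$, $\Delta\overline{X}^2\otimes \Delta\overline{X}^5$, cubes mixing $\Delta\overline{X}^{1,3}$, quartics mixing $\Delta\overline{X}^1$ with $\Delta\overline{X}^{2,3}$, quintics with one $\Delta\overline{X}^4$ and four $\Delta\overline{X}^1$, and sextics in $\Delta\overline{X}^1$. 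Matching these against the expression (\ref{eq:def_M2_tilde}) yields $\tilde{\mathfrak{M}}_2 f$, and the remainder is controlled using $M_3(U)$ by $C_f g_2(x)\bigl(\tilde\omega_{D^6f}(|\Delta|)+\gamma^{1/2}\bigr)$. The main obstacle of the whole proof is exactly this combinatorial bookkeeping at order $\gamma^3$: one has to carefully account for which pairings of noise components survive after using $M_{\mathcal{N},5}(U)$ together with $\mathbb{E}[\kappa]=0$ and $\mathbb{E}[\kappa^2]=1/4$, to make sure the sum of surviving cross-moments is exactly the expression (\ref{eq:def_M2_tilde}); once this algebraic identity is verified, the representation and the decay assumptions on $\tilde\Lambda_{f,2}$ follow from sublinear growth of the coefficients and the dominated convergence argument already used in parts \ref{prop:Talay_infinitesimal_approx_point_A} and \ref{prop:Talay_infinitesimal_approx_point_B}.
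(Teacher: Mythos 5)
Your overall strategy (Taylor expansion of the one-step transition, moment matching through $M_{\mathcal{N},2q+1}(U)$, identification of the $\gamma^{q+1}$ coefficient with $\mathfrak{M}_q$, reduction of part C to part B applied to $Af$ via $\mathfrak{M}_2f=\mathfrak{M}_1Af+\tilde{\mathfrak{M}}_2f$, and representation of the remainder through an auxiliary copy of $(U,\kappa)$ and a uniform variable, checked on compacts and at infinity using sublinear growth) is the same as the paper's. Part A goes through as you describe, because for $f\in\DomA_0=\mathcal{C}^2_K(\mathbb{R}^d)$ every coefficient product is multiplied by a compactly supported derivative of $f$ (and only one of the two alternatives \ref{hyp:erreur_tems_cours_fonction_test_reg_Lambda_representation_cv_1}/\ref{hyp:erreur_temps_cours_fonction_test_reg_Lambda_representation_cv_2} needs to be verified).

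For parts B and C, however, there is a genuine gap in the step where you ``absorb everything of higher order'' into $\Lambda_{f,q}$. A single global Taylor expansion to order $4$ (resp.\ $6$) leaves, besides the integral remainder, explicit polynomial terms of order larger than $\gamma^{q+1}$ built from cross-products of the five increments and evaluated at $x$: in part B, for instance, $\frac{\gamma^4}{4!}(D^4f(x);b(x)^{\otimes 4})$, $\frac{\gamma^4}{2}(D^2f(x);Ab(x)^{\otimes 2})$ and $\gamma^{3}(D^2f(x);b(x)\otimes Ab(x))$. Each such term must be written as $\gamma^{2}\langle g(x),\tilde{\mathbb{E}}[\tilde{\Lambda}(x,\gamma,\tilde{\omega})]\rangle$ with $\tilde{\mathbb{E}}[\sup_{x}\sup_{\gamma}\tilde{\Lambda}]<+\infty$ and with a $g$ controlled by $1+g_1$ (this is all the hypotheses give you), but $g_1$ only contains $\vert b\vert^{2}$ and $\vert Ab\vert$ to the first power, $D^kf\in\mathcal{C}_0$ carries no decay rate, and sublinear growth does not reduce $\vert b\vert^{4}$, $\vert Ab\vert^{2}$ or $\vert b\vert\,\vert Ab\vert$ to $1+g_1$: take $\sigma,\tilde{\sigma},Ab$ bounded, $\vert b(x)\vert^{2}\sim 1+\vert x\vert$ and $D^4f$ decaying more slowly than $1/\vert x\vert$, and the ratio $\gamma^{2}\vert b\vert^{4}\vert D^4f\vert/(1+g_1)$ is unbounded. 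The analogous terms of order beyond $\gamma^{3}$ create the same problem in part C with $g_2$. This is precisely what the paper's proof of Lemma \ref{lemme:Talay_infinitesimal_approx} is engineered to avoid: it telescopes over the intermediate points $\overline{X}^{j}_{\Gamma_{n}}$, Taylor-expands at each step only to the order needed to produce a $\gamma^{q+1}$ remainder, and then re-expands the resulting derivative factors at $\overline{X}_{\Gamma_n}$ at tailored orders, so that every remainder is a coefficient power explicitly listed in $g_q$ multiplied by a uniformly bounded difference of derivatives of $f$; unbounded coefficient products never appear multiplied only by a $\mathcal{C}_0$ derivative. You need this stepwise truncation (or an equivalent device) for your domination step; as written, the identification of $\mathfrak{M}_q$ is correct (modulo the phrasing: for $q=1$ the full $\gamma^2$ coefficient of $\mathcal{R}_1f$, not its difference with $\frac{\gamma^2}{2}A^2f$, equals $-\mathfrak{M}_1f$), but the claimed bound by $C_f\,g_q(x)$ times a vanishing factor does not follow from the stated hypotheses.
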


\begin{proof} 
The proof of point \ref{prop:Talay_infinitesimal_approx_point_A} is very similar to the proof of point \ref{prop:Talay_infinitesimal_approx_point_B} and point \ref{prop:Talay_infinitesimal_approx_point_C} but simpler and thus left to the reader. The proof of point \ref{prop:Talay_infinitesimal_approx_point_B} and point \ref{prop:Talay_infinitesimal_approx_point_C} is a direct consequence of the following Lemma.
\begin{lemme}
\label{lemme:Talay_infinitesimal_approx}
Assume that $b$, $\sigma$, $\vert D\sigma \vert \Tr[\sigma \sigma^{T}]^{1/2}$, $\tilde{\sigma}$ and $Ab$ have sublinear growth. We have the following properties:
\begin{enumerate}[label=\textbf{\Alph*.}] 

\item \label{lemme:Talay_infinitesimal_approx_point_A} Assume that the sequence $(U_n)_{n \in \mathbb{N}^{\ast}}$ satisfies $M_{\mathcal{N},3}(U)$ (see (\ref{hyp:matching_normal_moment_ordre_q_va_schema_Talay})) and $M_2(U)$ (see (\ref{hyp:moment_ordre_p_va_schema_Talay})). \\

Then, for every $f \in \mathcal{C}^4(\mathbb{R}^d;\mathbb{R})$ such that $D^qf \in \mathcal{C}_0(\mathbb{R}^d;\mathbb{R})$ for $q\in \{1,\ldots,4\}$, then

\begin{align*}
\Big\vert \mathbb{E}[ f(\overline{X}_{\Gamma_{n+1}})- f(\overline{X}_{\Gamma_{n}}) \vert \overline{X}_{\Gamma_{n}}  ] -\gamma_{n+1} Af(\overline{X}_{\Gamma_{n}})+ & \gamma_{n+1}^{2} \mathfrak{M}_1f(\overline{X}_{\Gamma_{n}}) \Big\vert \\
 \leqslant & \gamma_{n+1}^2 \Lambda_{f,1}(\overline{X}_{\Gamma_{n}},\gamma_{n+1})  ,
\end{align*}
with, given $l \in \mathbb{N}^{\ast}$ and a probability space $(\tilde{\Omega},\tilde{\mathcal{G}},\tilde{\mathbb{P}})$,
\begin{align*}
\forall x \in \mathbb{R}^d , \forall \gamma \in (0,\overline{\gamma}], \qquad \Lambda_{f,1}(x,\gamma)= \langle g_1 (  x ) ,\tilde{\mathbb{E}} [\tilde{\Lambda}_{f,1}(x,\gamma, \tilde{\omega})]  \rangle_{\mathbb{R}^l},
\end{align*}
with $\tilde{\Lambda}_{f,1}$ satisfying (\ref{eq:domination_erreur_L1}) and (\ref{hyp:erreur_temps_cours_fonction_test_reg_Lambda_representation_2_1}), $\mathfrak{M}_1$ defined in (\ref{eq:def_M1}) and $g_1: \mathbb{R}^d \to \mathbb{R}^l$, such that for every $x \in \mathbb{R}^d$, $\vert g_1(x) \vert \leqslant   1+\Tr[\sigma \sigma^{T}(x)]^2+\vert b(x) \vert^2+ \vert D \sigma(x)  \vert^2 \Tr[\sigma \sigma^{T}(x)]+\Tr[\tilde{\sigma} \tilde{\sigma}^{T}(x)]+\vert Ab(x) \vert$.

\item \label{lemme:Talay_infinitesimal_approx_point_B} Assume that the sequence $(U_n)_{n \in \mathbb{N}^{\ast}}$ satisfies $M_{\mathcal{N},5}(U)$ (see (\ref{hyp:matching_normal_moment_ordre_q_va_schema_Talay})) and $M_3(U)$ (see (\ref{hyp:moment_ordre_p_va_schema_Talay})). \\

Then, for every $f \in \mathcal{C}^6(\mathbb{R}^d;\mathbb{R})$ such that $D^qf \in \mathcal{C}_0(\mathbb{R}^d;\mathbb{R})$ for $q\in \{2,\ldots,6\}$, then

\begin{align*}
\Big\vert \mathbb{E}[ f(\overline{X}_{\Gamma_{n+1}})- f(\overline{X}_{\Gamma_{n}}) \vert \overline{X}_{\Gamma_{n}}  ] -\gamma_{n+1} Af(\overline{X}_{\Gamma_{n}})-\frac{\gamma_{n+1}^2}{2} A^2f(\overline{X}_{\Gamma_{n}})+ & \gamma_{n+1}^{3} \tilde{\mathfrak{M}}_2f(\overline{X}_{\Gamma_{n}}) \Big\vert \\
\leqslant & \gamma_{n+1}^3 \Lambda_{f,2}(\overline{X}_{\Gamma_{n}},\gamma_{n+1})  ,
\end{align*}
with, given $l \in \mathbb{N}^{\ast}$ and a probability space $(\tilde{\Omega},\tilde{\mathcal{G}},\tilde{\mathbb{P}})$,
\begin{align*}
\forall x \in \mathbb{R}^d , \forall \gamma \in (0,\overline{\gamma}], \qquad \Lambda_{f,2}(x,\gamma)=  \langle g_2 (  x ) ,\tilde{\mathbb{E}} [\tilde{\Lambda}_{f,2}(x,\gamma, \tilde{\omega})]  \rangle_{\mathbb{R}^l},
\end{align*}
with $\tilde{\Lambda}_{f,2}$ satisfying (\ref{eq:domination_erreur_L1}) and (\ref{hyp:erreur_temps_cours_fonction_test_reg_Lambda_representation_2_1}) and $\tilde{\mathfrak{M}}_2$ defined in (\ref{eq:def_M2_tilde}) and $g_2: \mathbb{R}^d \to \mathbb{R}^l$, such that for every $x \in \mathbb{R}^d$, $\vert g_2(x) \vert \leqslant  1+\Tr[\sigma \sigma^{T}(x)]^3+\vert b(x) \vert^3+ \vert D \sigma(x)  \vert^3 \Tr[\sigma \sigma^{T}(x)]^{3/2} +\Tr[\tilde{\sigma} \tilde{\sigma}^{T}(x)]+\vert Ab(x) \vert^2$.

\end{enumerate}
\end{lemme}

Notice that to obtain Proposition \ref{prop:Talay_infinitesimal_approx} point \ref{prop:Talay_infinitesimal_approx_point_B}, we use Lemma \ref{lemme:Talay_infinitesimal_approx} point \ref{lemme:Talay_infinitesimal_approx_point_A} and to obtain Proposition \ref{prop:Talay_infinitesimal_approx} point \ref{prop:Talay_infinitesimal_approx_point_C}, we combine Lemma \ref{lemme:Talay_infinitesimal_approx} point \ref{lemme:Talay_infinitesimal_approx_point_A} (with $f$ replaced by $Af$) and Lemma \ref{lemme:Talay_infinitesimal_approx} point \ref{lemme:Talay_infinitesimal_approx_point_B}
\begin{proof}[Proof of Lemma \ref{lemme:Talay_infinitesimal_approx}]
We simply prove point point \ref{lemme:Talay_infinitesimal_approx_point_B}. The proof of point point \ref{lemme:Talay_infinitesimal_approx_point_A} is similar but simpler. The first step consists in writing the following decomposition
\begin{align*}
f(\overline{X}_{\Gamma_{n+1}})- f(\overline{X}_{\Gamma_{n}}) =\sum_{j=0}^4 f( \overline{X}^{j}_{\Gamma_{n}} ) - f( \overline{X}^{j-1}_{\Gamma_{n}} )
\end{align*}
with notations (\ref{def:incr_Talay}) and $\overline{X}^{0}_{\Gamma_{n}}=\overline{X}_{\Gamma_{n}}$. At this point it remains to study each term of the sum of the $r.h.s.$ of the above equation. For $j=1$, we use Taylor expansion at order 6 and it follows that
\begin{align*}
\vert \mathbb{E}[f( \overline{X}^{1}_{\Gamma_{n}} ) \vert \overline{X}_{\Gamma_{n}} ] -  f( \overline{X}_{\Gamma_{n}} ) \vert \leqslant & \sum_{ i=1}^{6} \frac{\gamma_{n+1}^{i/2}( D^i f( \overline{X}_{\Gamma_{n}} ) ; \mathbb{E}[(\sigma( \overline{X}_{\Gamma_{n}}) U_{n+1})^{\otimes i} ) \vert \overline{X}_{\Gamma_{n}} ] )}{i ! } \\
&+ \gamma_{n+1}^3 \Lambda_{f,2,1}(\overline{X}_{\Gamma_{n}},\gamma_{n+1}) \\
\end{align*}
with $\Lambda_{f,2,1}(x, \gamma)=g_{2,1} (x)  \tilde{\mathbb{E}}[\tilde{\Lambda}_{f,2,1}(x,z,\gamma)]$ where $\tilde{\Lambda}_{f,2,1} ( x,\gamma)= \tilde{\mathcal{R}}_{f,2,1}(x,z,\gamma,U, \Theta)$ with $U \sim \mathbb{P}_U$, $\Theta \sim \mathcal{U}_{[0,1]}$ under $\tilde{\mathbb{P}}$, $g_{2,1}(x)=  \Tr[ \sigma \sigma^{T}(x) ]^3$ and
 \begin{align*}
\begin{array}{crcl}
\tilde{\mathcal{R}}_{f,2,1} & :  \mathbb{R}^d  \times \mathbb{R}_+ \times \mathbb{R}^{N }  \times [0,1] & \to & \mathbb{R}_+ \\
 &( x, \gamma , u, \theta ) & \mapsto &  \tilde{\mathcal{R}}_{f,2,1} ( x, \gamma , u, \theta )  ,
\end{array}
\end{align*}
with
\begin{align*}
\tilde{\mathcal{R}}_{f,2,1} ( x, \gamma , u, \theta )  =\frac{\vert u \vert^6}{5!}  (1- \theta)^{5} \vert D^6  f( x + \theta \sqrt{\gamma} \sigma(  x) u)- D^6  f( x ) \vert .
\end{align*}

We are going to prove that $\tilde{\Lambda}_{f,2,1}$ satisfies (\ref{hyp:erreur_temps_cours_fonction_test_reg_Lambda_representation_2_1}). We fix $u \in \mathbb{R}^N$ and $\theta \in [0,1]$. Now, since the function $ \sigma$ has sublinear growth, there exists $C_{ \sigma} \geqslant 0$ such that $ \vert \sigma(x) \vert \leqslant C_{ \sigma}(1+ \vert x \vert ) $ for every $x \in \mathbb{R}^d$. Therefore, since $f$ has compact support, there exists $\underline{\gamma}(u,\theta)>0$ and $R>0$ such that 
\begin{align*}
\sup_{\vert x \vert >R} \sup_{\gamma \leqslant \underline{\gamma}(u,\theta)} \tilde{\mathcal{R}}_{f,2,1}(x,\gamma,u,\theta)=0.
\end{align*}
 It follows that (\ref{hyp:erreur_temps_cours_fonction_test_reg_Lambda_representation_2_1}) (ii) holds. Moreover since $D^6 f$ is bounded, and $M_3(U)$ (see (\ref{hyp:moment_ordre_p_va_schema_Talay})) holds, $\tilde{\Lambda}_{f,2}$ also satisfies (\ref{eq:domination_erreur_L1}).\\
 
 The rest of the proof is completely similar and involves heavy calculus so we just give the sketch to follow for $j=2$ and invite the reader to follow the same line for $j \in \{3,4,5\}$. For $j=2$, we use Taylor expansion at order 3 and it follows that
\begin{align*}
\vert \mathbb{E}[f( \overline{X}^{2}_{\Gamma_{n}} ) \vert \overline{X}_{\Gamma_{n}} ] -  f( \overline{X}^{1}_{\Gamma_{n}} ) \vert \leqslant & \sum_{ i=1}^{3} \frac{\gamma_{n+1}^{i}  \mathbb{E}[( D^i f( \overline{X}^{1}_{\Gamma_{n}} ) ;(b( \overline{X}_{\Gamma_{n}}) )^{\otimes i} ) \vert \overline{X}_{\Gamma_{n}}  ]}{i ! } \\
&+ \frac{\gamma_{n+1}^3}{2} (D^3f(\overline{X}_{\Gamma_{n}});b(\overline{X}_{\Gamma_{n}})^{\otimes 3}) + \gamma_{n+1}^3 \Lambda_{f,2,2}(\overline{X}_{\Gamma_{n}},\gamma_{n+1}) 
\end{align*}
 
with $\Lambda_{f,2,2}(x, \gamma)=g_{2,2} (x)  \tilde{\mathbb{E}}[\tilde{\Lambda}_{f,2,2}(x,z,\gamma)]$ where $\tilde{\Lambda}_{f,2,2} ( x,\gamma)= \tilde{\mathcal{R}}_{f,2,2}(x,z,\gamma,U, \Theta)$ with $U \sim \mathbb{P}_U$, $\Theta \sim \mathcal{U}_{[0,1]}$ under $\tilde{\mathbb{P}}$, $g_{2,2}(x)=  \vert b(x) \vert^3 $ and
 \begin{align*}
\begin{array}{crcl}
\tilde{\mathcal{R}}_{f,2,1} & :  \mathbb{R}^d  \times \mathbb{R}_+ \times \mathbb{R}^{N }  \times [0,1] & \to & \mathbb{R}_+ \\
 &( x, \gamma , u, \theta ) & \mapsto &  \tilde{\mathcal{R}}_{f,2,1} ( x, \gamma , u, \theta )  ,
\end{array}
\end{align*}
with
\begin{align*}
\tilde{\mathcal{R}}_{f,2,2} ( x, \gamma , u, \theta )  =\frac{1}{2}  (1- \theta)^{2} \vert D^3  f( x + \sqrt{\gamma} \sigma(  x) u+\theta \gamma b(x))- D^3  f( x ) \vert .
\end{align*}
Following the same approach as for the case $j=1$ we can show that $\tilde{\Lambda}_{f,2,2}$ satisfies (\ref{hyp:erreur_temps_cours_fonction_test_reg_Lambda_representation_2_1}) and (\ref{eq:domination_erreur_L1}).

To complete the study for $j=1$, we replace $ D^i f( \overline{X}^{1}_{\Gamma_{n}}$, $i \in \{1,2\}$ by an upper bound of their Taylor expansion at order $2(3-i)$ and at point $\overline{X}^{j-1}_{\Gamma_{n}}=\overline{X}_{\Gamma_{n}}$, that is

\begin{align*}
\vert \mathbb{E}[D^i f( \overline{X}^{1}_{\Gamma_{n}} )\vert \overline{X}_{\Gamma_{n}}   ] - D^i f( \overline{X}_{\Gamma_{n}} ) \vert\leqslant &  \sum_{ \bar{i}=1}^{2(3-i)}  \frac{\gamma_{n+1}^{\bar{i}/2}( D^{\bar{i}+i} f( \overline{X}_{\Gamma_{n}} ) ; \mathbb{E}[(\sigma( \overline{X}_{\Gamma_{n}}) U_{n+1})^{\otimes \bar{i}} ) \vert \overline{X}_{\Gamma_{n}} ] )}{\bar{i} ! } \\
&+ \gamma_{n+1}^{3-i} \Lambda_{D^if,2,1}(\overline{X}_{\Gamma_{n}},\gamma_{n+1}) 
\end{align*}
with $\Lambda_{D^if,2,2}(x, \gamma)=\Tr[\sigma \sigma^{T}(x)]^{3-i}  \tilde{\mathbb{E}}[\tilde{\Lambda}_{D^i f,2,2}(x,z,\gamma)]$ where $\tilde{\Lambda}_{D^if ,2,2} ( x,\gamma)= \tilde{\mathcal{R}}_{D^i f,2,2}(x,z,\gamma,U, \Theta)$ with $U \sim \mathbb{P}_U$, $\Theta \sim \mathcal{U}_{[0,1]}$ under $\tilde{\mathbb{P}}$, and
 \begin{align*}
\begin{array}{crcl}
\tilde{\mathcal{R}}_{D^i f,2,2} & :  \mathbb{R}^d  \times \mathbb{R}_+ \times \mathbb{R}^{N }  \times [0,1] & \to & \mathbb{R}_+ \\
 &( x, \gamma , u, \theta ) & \mapsto &  \tilde{\mathcal{R}}_{D^i f,2,2} ( x, \gamma , u, \theta )  ,
\end{array}
\end{align*}
with
\begin{align*}
\tilde{\mathcal{R}}_{D^i f,2,2} ( x, \gamma , u, \theta )  =\frac{\vert u \vert^{2(3-i)}}{(5-2i)!}  (1- \theta)^{5-2i} \vert D^{3-i}  f( x + \theta \sqrt{\gamma} \sigma(  x) u)- D^{3-i}  f( x ) \vert .
\end{align*}
Following the same approach as for the case $j=1$ we can show that $\tilde{\Lambda}_{D^if,2,2}$ satisfies (\ref{hyp:erreur_temps_cours_fonction_test_reg_Lambda_representation_2_1}) and (\ref{eq:domination_erreur_L1}).
We do not detail the rest of the proof which is similar but simply describe the approach we use. For $j=\{3,4,5\}$ we apply the same method as for $j=2$: We first use the Taylor expansion at point $\overline{X}^{j-1}_{\Gamma_{n}}$ such that the remainder has the form $\gamma_{n+1}^3 \Lambda_{f,2,j}$. Then we develop each term of this expansion at point $\overline{X}^{j-2}_{\Gamma_{n}}$ at a well chosen order such that the global remainder is still of the form $\gamma_{n+1}^3 \Lambda_{f,2,j}$ ($\Lambda_{f,2,j}$ is obviously changed). We iterate the method until we use the Taylor expansion at point $\overline{X}_{\Gamma_{n}}$. Then, the final remainder $\Lambda_{f,2}$ has the expected form and the term which appears in the expansion can be identified with $\gamma_{n+1} Af(\overline{X}_{\Gamma_{n}})+\frac{\gamma_{n+1}^2}{2} A^2f(\overline{X}_{\Gamma_{n}})+ \gamma_{n+1}^{3} \tilde{\mathfrak{M}}_2f(\overline{X}_{\Gamma_{n}}) $. To complete the proof we notice that for every $f \in \mathcal{C}^6(\mathbb{R}^d)$ and every $j \in \{1, \ldots, 5\}$, $\tilde{\mathcal{R}}_{f,2,j}=\tilde{\mathcal{R}}_{-f,2,j}$.
\end{proof}

\end{proof}

\subsection{Growth control}

\begin{lemme}
\label{lemme:incr_lyapunov_X_Talay}
 Let $p > 0,a \in (0,1]$, $s \geqslant 1$, $\rho \in [1,2]$ and, $\psi(y)=y^p$ and $\phi(y)=y^a$. We suppose that the sequence $(U_n)_{n \in \mathbb{N}^{\ast}}$ satisfies $M_{\rho \vee (2 p \rho /s)}(U)$ (see (\ref{hyp:moment_ordre_p_va_schema_Talay})).
Then, for every $n \in \mathbb{N}$ and every $f \in \DomA_0$,

\begin{align}
\label{eq:incr_lyapunov_X_Talay_f_DomA}
 \mathbb{E}[  \vert f(\overline{X}_{\Gamma_{n+1}})-  & f( \overline{X}_{\Gamma_{n}} + \gamma_{n+1} b( \overline{X}_{\Gamma_{n}}) +  \gamma_{n+1}^2 \frac{1}{2}Ab( \overline{X}_{\Gamma_{n}}))\vert^{\rho}\vert \overline{X}_{\Gamma_{n}}  ]  \\
  \leqslant &   C_f \gamma_{n+1}^{\rho/2}  \Tr [\sigma \sigma^{T} (\overline{X}_{\Gamma_n} )  ]^{\rho/2}  + C_f \gamma_{n+1}^{\rho}   \vert D\sigma \vert^{\rho} \Tr[\sigma \sigma^{T}]^{\rho/2}  + C_f \gamma_{n+1}^{\rho 3/2}  \Tr [\tilde{\sigma} \tilde{\sigma}^{T} (\overline{X}_{\Gamma_n} )  ]^{\rho/2} . \nonumber
\end{align}
with $\DomA_0 =\mathcal{C}^2_K (\mathbb{R}^d )$. In other words, we have $\mathcal{GC}_{Q}(\DomA_0,  g_{\sigma},\rho,\epsilon_{\mathcal{I}}) $ (see (\ref{hyp:incr_X_Lyapunov})) with $g_{\sigma}=\Tr[ \sigma \sigma^{T} ]^{ \rho/2}+ \vert D\sigma \vert^{\rho} \Tr[\sigma \sigma^{T}]^{\rho/2} +\Tr [\tilde{\sigma} \tilde{\sigma}^{T} (\overline{X}_{\Gamma_n} )  ]^{\rho/2}$ and $\epsilon_{\mathcal{I}}(\gamma)=\gamma^{\rho/2}$ for every $\gamma \in \mathbb{R}_+$. \\


Moreover, if (\ref{hyp:Lyapunov_control_Talay}) and $\mathfrak{B}(\phi)$ (see (\ref{hyp:controle_coefficients_Talay})) hold and 


\begin{align}
\label{hyp:control_step_weight_pol_Talay}
\mathcal{S} \mathcal{W}_{pol}(p,a,s,\rho) \qquad ap \rho/s \leqslant p+a-1.
\end{align}

Then, for every $n \in \mathbb{N}$, we have
\begin{align}
\label{eq:incr_lyapunov_X_Talay_f_tens}
 \mathbb{E}[\vert  V^{p/s}(\overline{X}_{\Gamma_{n+1}})-V^{p/s}(\overline{X}_{\Gamma_{n}}) \vert^{\rho}\vert \overline{X}_{\Gamma_{n}}]  
\leqslant  C \gamma_{n+1}^{\rho/2} V^{p+a-1}(\overline{X}_{\Gamma_{n}}).
\end{align}
In other words, we have $\mathcal{GC}_{Q}(V^{p/s},V^{p+a-1},\rho,\epsilon_{\mathcal{I}}) $ (see (\ref{hyp:incr_X_Lyapunov})) with and $\epsilon_{\mathcal{I}}(\gamma)=\gamma^{\rho/2}$ for every $\gamma \in \mathbb{R}_+$.

\end{lemme}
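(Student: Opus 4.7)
My plan for the first inequality (\ref{eq:incr_lyapunov_X_Talay_f_DomA}) is to exploit the boundedness of $Df$ (since $f\in\mathcal{C}^2_K(\mathbb{R}^d)$). I would bound
\begin{equation*}
|f(\overline{X}_{\Gamma_{n+1}})-f(\overline{X}_{\Gamma_n}+\Delta\overline{X}^2_{n+1}+\Delta\overline{X}^5_{n+1})|^{\rho}\leqslant \Vert Df\Vert_\infty^{\rho}\,3^{\rho-1}\bigl(|\Delta\overline{X}^1_{n+1}|^{\rho}+|\Delta\overline{X}^3_{n+1}|^{\rho}+|\Delta\overline{X}^4_{n+1}|^{\rho}\bigr),
\end{equation*}
and then handle the three conditional moments via (\ref{def:incr_Talay}) and submultiplicativity of the Frobenius norm, e.g.\ $|\sigma(\overline{X}_{\Gamma_n})U_{n+1}|^\rho\leqslant\Tr[\sigma\sigma^{\ast}(\overline{X}_{\Gamma_n})]^{\rho/2}|U_{n+1}|^\rho$, with analogous estimates for $\Delta\overline{X}^3_{n+1}$ (where $\mathcal{W}_{n+1}$ has finite $\rho$-moments by construction from $U_{n+1}$ and $\kappa_{n+1}$) and $\Delta\overline{X}^4_{n+1}$. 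The moment assumption $M_{\rho\vee(2p\rho/s)}(U)$ supplies $\mathbb{E}[|U|^\rho]<\infty$, which suffices here.

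For the second inequality (\ref{eq:incr_lyapunov_X_Talay_f_tens}), I would mimic the dichotomy of Proposition \ref{prop:recursive_control_Talay}, treating $p/s\geqslant 1$ and $p/s\in(0,1)$ separately. The common starting point is the $[\sqrt{V}]_1$-Lipschitzness of $\sqrt{V}$, a direct consequence of $|\nabla V|^2\leqslant C_V V$ from (\ref{hyp:Lyapunov_control_Talay}). When $p/s\geqslant 1$, writing $V^{p/s}=(\sqrt{V})^{2p/s}$ and invoking the mean value inequality $|a^q-b^q|\leqslant q\max(a,b)^{q-1}|a-b|$ together with (\ref{eq:puisance_somme_n_terme}) gives the pointwise bound
\begin{equation*}
|V^{p/s}(\overline{X}_{\Gamma_{n+1}})-V^{p/s}(\overline{X}_{\Gamma_n})|\leqslant C\bigl(V^{p/s-1/2}(\overline{X}_{\Gamma_n})|H_{n+1}|+|H_{n+1}|^{2p/s}\bigr),\qquad H_{n+1}:=\Delta\overline{X}_{n+1}.
\end{equation*}
When $p/s\in(0,1)$, the concavity of $x\mapsto x^{p/s}$ yields $V^{p/s}(\overline{X}_{\Gamma_{n+1}})-V^{p/s}(\overline{X}_{\Gamma_n})\leqslant(p/s)V^{p/s-1}(\overline{X}_{\Gamma_n})(V(\overline{X}_{\Gamma_{n+1}})-V(\overline{X}_{\Gamma_n}))$, reducing matters to the increment of $V$ itself which is then handled by a second-order Taylor expansion. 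In either case, raising to the $\rho$-th power, taking the conditional expectation, and invoking $\mathfrak{B}(\phi)$ with $\phi(y)=y^a$ together with Jensen and $M_{2p\rho/s}(U)$ produces a combination of $\gamma_{n+1}^{\rho/2}V^{\rho(p/s-1/2)+a\rho/2}(\overline{X}_{\Gamma_n})$ and $\gamma_{n+1}^{p\rho/s}V^{ap\rho/s}(\overline{X}_{\Gamma_n})$, together with strictly higher-order contributions in $\gamma_{n+1}$ absorbed through $\gamma_{n+1}\leqslant\overline{\gamma}$.

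The main obstacle will be the bookkeeping that brings both $V$-exponents below $p+a-1$ and both $\gamma_{n+1}$-exponents above $\rho/2$ under the step-weight hypothesis $\mathcal{S}\mathcal{W}_{pol}(p,a,s,\rho)$, i.e.\ $ap\rho/s\leqslant p+a-1$. The ``pure'' piece is controlled precisely by this inequality, with the excess power of $\gamma_{n+1}$ absorbed via $\gamma_{n+1}\leqslant\overline{\gamma}$ when $p/s\geqslant 1/2$, and the lower bound $V\geqslant v_*>0$ dealing with the low regime $p/s<1/2$. The linear piece's $V$-exponent $\rho(p/s-1/2)+a\rho/2$ is dominated by $p+a-1$ under the same assumption combined with $\rho\leqslant 2$ and $a\leqslant 1$. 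Once (\ref{eq:incr_lyapunov_X_Talay_f_tens}) is established, the announced statement $\mathcal{GC}_Q(V^{p/s},V^{p+a-1},\rho,\epsilon_{\mathcal{I}})$ with $\epsilon_{\mathcal{I}}(\gamma)=\gamma^{\rho/2}$ follows from Remark \ref{rmrk:Accroiss_mes} applied with $\mathfrak{X}_n=V^{p/s}(\overline{X}_{\Gamma_n})$.
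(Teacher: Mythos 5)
Your treatment of (\ref{eq:incr_lyapunov_X_Talay_f_DomA}) is essentially the paper's: after removing the two drift terms the residual increment is exactly $\Delta\overline{X}^1_{n+1}+\Delta\overline{X}^3_{n+1}+\Delta\overline{X}^4_{n+1}$, and the bound through $\Vert Df\Vert_\infty$ (Lipschitzness of $f\in\mathcal{C}^2_K$), the submultiplicative estimates $\vert\sigma(\overline{X}_{\Gamma_n})U_{n+1}\vert\leqslant\Tr[\sigma\sigma^{\ast}(\overline{X}_{\Gamma_n})]^{1/2}\vert U_{n+1}\vert$, and the finiteness of the moments of $U$ and $\mathcal{W}$ give the three announced terms, as in the paper.

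The second half has a genuine gap, located in your case $p/s\in(0,1)$. The concavity inequality $V^{p/s}(\overline{X}_{\Gamma_{n+1}})-V^{p/s}(\overline{X}_{\Gamma_n})\leqslant (p/s)V^{p/s-1}(\overline{X}_{\Gamma_n})\,(V(\overline{X}_{\Gamma_{n+1}})-V(\overline{X}_{\Gamma_n}))$ is only one-sided; it suffices in Proposition \ref{prop:recursive_control_Talay}, where one bounds a conditional mean from above, but (\ref{eq:incr_lyapunov_X_Talay_f_tens}) requires the conditional $\rho$-moment of the \emph{absolute} increment. The two-sided version of concavity replaces $V^{p/s-1}(\overline{X}_{\Gamma_n})$ by $\big(V(\overline{X}_{\Gamma_{n+1}})\wedge V(\overline{X}_{\Gamma_n})\big)^{p/s-1}$, which you can only bound by the constant $v_{\ast}^{p/s-1}$; you are then left with $\mathbb{E}[\vert V(\overline{X}_{\Gamma_{n+1}})-V(\overline{X}_{\Gamma_n})\vert^{\rho}\vert\overline{X}_{\Gamma_n}]\lesssim\gamma_{n+1}^{\rho/2}V^{(1+a)\rho/2}(\overline{X}_{\Gamma_n})$ (plus higher order in $\gamma_{n+1}$), and $(1+a)\rho/2\leqslant p+a-1$ is \emph{not} implied by $ap\rho/s\leqslant p+a-1$: take $p=a=1$, $s=4$, $\rho=2$, where the hypothesis (\ref{hyp:control_step_weight_pol_Talay}) holds but $(1+a)\rho/2=2>1=p+a-1$. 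The paper avoids this by splitting at $2p/s=1$ rather than $p/s=1$: for $2p\geqslant s$ it uses (\ref{eq:puisssance_sup_1}) with $\alpha=2p/s\geqslant1$ applied to $\sqrt V$ (your $p/s\geqslant1$ argument is exactly this and in fact covers all of $p/s\geqslant1/2$), while for $2p\leqslant s$ it uses that $V^{p/s}=(\sqrt V)^{2p/s}$ is $2p/s$-H\"older (Lipschitzness of $\sqrt V$ combined with $\vert u^{\theta}-v^{\theta}\vert\leqslant\vert u-v\vert^{\theta}$ for $\theta\leqslant1$), which produces the $V$-exponent $ap\rho/s$, precisely the quantity controlled by $\mathcal{S}\mathcal{W}_{pol}(p,a,s,\rho)$. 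You should replace the concavity step by this H\"older argument. (Your reduction of the linear-term exponent $\rho(p/s-1/2)+a\rho/2$ to $p+a-1$ is asserted at the same level of detail as in the paper, so I do not count it against you; your closing appeal to Remark \ref{rmrk:Accroiss_mes} with the $\mathcal{F}^{\overline{X}}_n$-measurable comparison points is correct and is also how the paper's statement is meant.)
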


\begin{proof}
We begin by noticing that

\begin{align*}
 \vert \overline{X}_{\Gamma_{n+1}} -&( \overline{X}_{\Gamma_{n}} + \gamma_{n+1} b( \overline{X}_{\Gamma_{n}}) +  \gamma_{n+1}^2 \frac{1}{2} Ab( \overline{X}_{\Gamma_{n}})) \vert \\
 \leqslant & C  \gamma_{n+1}^{1/2} \Tr [\sigma \sigma^{T} (\overline{X}_{\Gamma_n}) ]^{1/2} \vert U_{n+1} \vert  +C\gamma_{n+1}\vert D\sigma \vert \Tr[\sigma \sigma^{T}]^{1/2}  \vert \mathcal{W}_{n+1} \vert+\gamma_{n+1}^{ 3/2}  \Tr [\tilde{\sigma} \tilde{\sigma}^{T} (\overline{X}_{\Gamma_n} )  ]^{1/2} \vert U_{n+1} \vert  .
\end{align*}

Let $f \in \DomA$. Then $f$ is Lipschitz and the previous inequality gives (\ref{eq:incr_lyapunov_X_Talay_f_DomA}). Using Remark \ref{rmrk:Accroiss_mes}, we obtain $\mathcal{GC}_{Q}(\DomA_0,  g_{\sigma},\rho,\epsilon_{\mathcal{I}}) $.\\

We focus now on the proof of (\ref{eq:incr_lyapunov_X_Talay_f_tens}).
We first notice that $\mathfrak{B}(\phi)$ (see (\ref{hyp:controle_coefficients_Talay})) implies that for any $n \in \mathbb{N}$,
\begin{align*}
 \vert \overline{X}_{\Gamma_{n+1}} -\overline{X}_{\Gamma_n} \vert\leqslant C  \gamma_{n+1}^{1/2}\sqrt{\phi \circ V (\overline{X}_{\Gamma_n})}(1+\vert U_{n+1}\vert +\vert \mathcal{W}_{n+1} \vert \vert )
\end{align*}  

 \paragraph{Case $2p\leqslant s$.} 
 
 We notice that  $V^{p/s}$ is $\alpha$-Hölder for any $\alpha \in [2p/s,1]$ (see Lemma 3. in \cite{Panloup_2008}) and then $V^{p/s}$ is $2p/s$-Hölder. We deduce that
\begin{align*}
\mathbb{E} [ \vert V^{p/s}( \overline{X}_{\Gamma_{n+1}} )-& V^{p/s}(\overline{X}_{\Gamma_n}) \vert^{\rho} \vert \overline{X}_{\Gamma_n} ] \leqslant  C[V^{p/s}]^{\rho}_{2p/s} \gamma_{n+1}^{ \rho/2}  V^{a \rho /2}(\overline{X}_{\Gamma_{n}}).
\end{align*}
In order to obtain (\ref{eq:incr_lyapunov_X_Talay_f_tens}), it remains to use $ap \rho /s \leqslant a+p-1$.\\

\paragraph{Case $2p\geqslant s$.}  Using  the following inequality
 \begin{align*}
\forall u,v \in \mathbb{R}_+,\forall \alpha \geqslant 1, \qquad \vert u^{\alpha} -v^{\alpha} \vert \leqslant &  \alpha 2^{\alpha-1} ( v^{\alpha-1} \vert u -v \vert + \vert u -v \vert ^{\alpha} ),
 \end{align*}
with $\alpha=2p/s$, and since $\sqrt{V}$ is Lipschitz, we have

\begin{align*}
\big \vert V^{p/s}( \overline{X}_{\Gamma_{n+1}} )-V^{p/s}(\overline{X}_{\Gamma_n}) \big \vert \leqslant &  2^{2p/s}p/s ( V^{p/s-1/2}(\overline{X}_{\Gamma_n}) \vert \sqrt{V}( \overline{X}_{\Gamma_{n+1}} )  - \sqrt{V} (\overline{X}_{\Gamma_n} ) \vert  \\
 & + \vert  \sqrt{V}( \overline{X}_{\Gamma_{n+1}} )  - \sqrt{V}(\overline{X}_{\Gamma_n} ) \vert^{2p/s}  ) \\
\leqslant &  2^{2p/s}p/s ( [ \sqrt{V}]_1 V^{p/s-1/2}(\overline{X}_{\Gamma_n} ) \vert  \overline{X}_{\Gamma_{n+1}}- \overline{X}_{\Gamma_n} \vert \\
& + [ \sqrt{V}]_1^{2p/s}  \vert   \overline{X}_{\Gamma_{n+1}} -\overline{X}_{\Gamma_n}\vert^{2p/s}   ).
\end{align*}
In order to obtain (\ref{eq:incr_lyapunov_X_Talay_f_tens}), it remains to use the assumptions $\mathfrak{B}(\phi)$ (see (\ref{hyp:controle_coefficients_Talay}))  and then $ap\rho/s \leqslant p+a-1$.

\end{proof}

\begin{lemme}
\label{lemme:incr_lyapunov_X_Talay_TCL}
 Let $\rho \in [1,2]$ and, $\psi(y)=y^p$ and $\phi(y)=y^a$. We suppose that the sequence $(U_n)_{n \in \mathbb{N}^{\ast}}$ satisfies $M_{\rho}(U)$ (see (\ref{hyp:moment_ordre_p_va_schema_Talay})).
Then, for every $n \in \mathbb{N}$, we have: for every $f \in F=\{f \in\mathcal{C}^2(\mathbb{R}^d;\mathbb{R}), D^qf \in \mathcal{C}_b(\mathbb{R}^d;\mathbb{R}), \forall q\in \{1,2\} \}.$
\begin{align}
\label{eq:incr_lyapunov_X_Talay_f_DomA_TCL}
 \mathbb{E}[  \vert f(\overline{X}_{\Gamma_{n+1}})- & f( \overline{X}_{\Gamma_{n}}) - \sqrt{\gamma_{n+1}}(Df(\overline{X}_{\Gamma_{n}});\sigma(\overline{X}_{\Gamma_{n}})U_{n+1}) \vert^{\rho}\vert \overline{X}_{\Gamma_{n}}  ]   \\
 \leqslant&   C_f \gamma_{n+1}^{\rho}  \Tr [\sigma \sigma^{T} (\overline{X}_{\Gamma_n} )  ]^{\rho} +C_f \gamma_{n+1}^{\rho}  \vert  b(X_n) \vert  + C_f \gamma_{n+1}^{\rho}  \vert D\sigma(\overline{X}_{\Gamma_n}) \vert^{\rho} \Tr[\sigma \sigma^{T}(\overline{X}_{\Gamma_n})]^{\rho/2}  \nonumber \\
 &+ C_f \gamma_{n+1}^{\rho 3/2}  \Tr [\tilde{\sigma} \tilde{\sigma}^{T} (\overline{X}_{\Gamma_n} )  ]^{\rho/2}   +  C_f \gamma_{n+1}^{2 \rho}  \vert Ab(\overline{X}_{\Gamma_n}) \vert^{\rho} .  \nonumber
\end{align}
In particular for $q \in \{1,2\}$, assume that $\mathbb{P}-a.s.$, $\lim_{n \to + \infty} \nu^{\gamma}_n(\vert \sigma^{T} Df \vert^2)=\nu(\vert \sigma^{T} Df \vert^2)$ for every $f \in F$ satisfying $Af \in \mathcal{C}_b(\mathbb{R}^d; \mathbb{R})$ when $q=2$ and that $\Tr[ \sigma \sigma^{T} ]=o_{\vert x \vert \to +\infty}(W)$ with $\sup_{n \in \mathbb{N}^{\ast}}\nu^{\gamma}_n(W)<+\infty$. \\

Then $\mathcal{GC}_{Q,q}(F,g,\rho,\epsilon_{\mathscr{X}},\epsilon_{\mathcal{G}\mathcal{C}},\mathfrak{V})$ (see (\ref{hyp:incr_X_Lyapunov_vitesse})) is satisfied with $g=\Tr[ \sigma \sigma^{T} ]^{ \rho}+ \vert b \vert^{\rho}+ \vert D\sigma \vert^{\rho} \Tr[\sigma \sigma^{T}]^{\rho/2}  +\Tr [\tilde{\sigma} \tilde{\sigma}^{T}  ]^{\rho/2}+\vert Ab \vert^{\rho}$, $\epsilon_{\mathscr{X}}(\gamma)= \gamma$ and $\epsilon_{\mathcal{G}\mathcal{C}}(\gamma)=\gamma^{\rho}$ for every $\gamma \in \mathbb{R}_+$ and $\mathfrak{V}f=\vert \sigma^{T} Df \vert^2$ for every $f \in\mathcal{C}^1(\mathbb{R}^d;\mathbb{R})$.

\end{lemme}

\begin{proof} 
The first step consists in writing
\begin{align}
\label{preuve:decomp_incr_lyapunov_X_Talay_TCL}
f(\overline{X}_{\Gamma_{n+1}})- f( \overline{X}_{\Gamma_{n}}) =&  f( \overline{X}_{\Gamma_{n}}+ \sqrt{\gamma_{n+1}}\sigma(\overline{X}_{\Gamma_{n}})U_{n+1} )   - f( \overline{X}_{\Gamma_{n}})  \\
&+ f(\overline{X}_{\Gamma_{n+1}}) - f( \overline{X}_{\Gamma_{n}}+ \sqrt{\gamma_{n+1}}\sigma(\overline{X}_{\Gamma_{n}})U_{n+1} ) .  \nonumber 
\end{align}
We study the first term of the $r.h.s.$ of the above equation. Using Taylor expansion at order two and the fact that $Df \in \mathcal{C}_b(\mathbb{R}^d)$ yields
\begin{align*}
\big\vert f( \overline{X}_{\Gamma_{n}}+ \sqrt{\gamma_{n+1}}\sigma(\overline{X}_{\Gamma_{n}})U_{n+1} )   - f( \overline{X}_{\Gamma_{n}}) & - \sqrt{\gamma_{n+1}}(Df(\overline{X}_{\Gamma_{n}});\sigma(\overline{X}_{\Gamma_{n}})U_{n+1}) \big \vert \\
&\leqslant \frac{1}{2} \Vert D^2f \Vert_{\infty}  \vert  \sqrt{\gamma_{n+1}}\sigma(\overline{X}_{\Gamma_{n}})U_{n+1}  \vert^2 .
\end{align*}
Now we study the second term of the $r.h.s.$ of (\ref{preuve:decomp_incr_lyapunov_X_Talay_TCL}). From Taylor expansion at order one
\begin{align*}
 \vert f(\overline{X}_{\Gamma_{n+1}}) - f( \overline{X}_{\Gamma_{n}}+ \sqrt{\gamma_{n+1}}\sigma(\overline{X}_{\Gamma_{n}})U_{n+1} ) \vert
\leqslant & \Vert Df \Vert_{\infty}  \Big\vert \gamma_{n+1}  \Big( b(\overline{X}_{\Gamma_{n}}) + (D \sigma( \overline{X}_{\Gamma_n} ) ; \sigma( \overline{X}_{\Gamma_n} )\mathcal{W}_{n+1}^{T}  )\Big)  \\
& + \gamma_{n+1}^{3/2} \tilde{\sigma}( \overline{X}_{\Gamma_n} ) U_{n+1}  + \gamma_{n+1}^{2} Ab( \overline{X}_{\Gamma_n} )  \Big\vert .
\end{align*}
Gathering both terms of (\ref{preuve:decomp_incr_lyapunov_X_Talay_TCL}), raising to the power $\rho$ and taking conditional expectancy thus yields (\ref{eq:incr_lyapunov_X_Talay_f_DomA_TCL}). To obtain $\mathcal{GC}_{Q,q}(F,g,\rho,\epsilon_{\mathscr{X}},\epsilon_{\mathcal{G}\mathcal{C}},\mathfrak{V})$ (see (\ref{hyp:incr_X_Lyapunov_vitesse})), we observe that $Af$ is bounded when $q=2$ and it remains to show that (\ref{hyp:Lindeberg_CLT_scheme}) holds with $\mathscr{X}_{f,n} =\sqrt{\gamma_{n+1}}(Df(\overline{X}_{\Gamma_{n}});\sigma(\overline{X}_{\Gamma_{n}})U_{n+1})$, $n \in \mathbb{N}$. This is already done in the seminal paper \cite{Lamberton_Pages_2002} (see Proposition 2.) and we invite the reader to refer to this result.
\end{proof}

\subsection{Proof of Theorem \ref{th:cv_was_Talay}}
\paragraph{Proof of Theorem \ref{th:cv_was_Talay} point \ref{th:cv_was_Talay_point_A}\\}

This result follows from Theorem \ref{th:tightness} and Theorem \ref{th:identification_limit}. The proof consists in showing that the assumptions from those theorems are satisfied.

\paragraph{Step 1. Mean reverting recursive control}
First, we show that $\mathcal{RC}_{Q,V}(\psi_p,\phi,p\tilde{\alpha},p\beta)$ and $\mathcal{RC}_{Q,V}(\psi_{1},\phi,\tilde{\alpha},\beta)$ (see (\ref{hyp:incr_sg_Lyapunov})) is satisfied for every $\tilde{\alpha} \in (0,\alpha)$.\\

 Since (\ref{hyp:Lyapunov_control_Talay}), $\mathfrak{B}(\phi)$ (see (\ref{hyp:controle_coefficients_Talay})) and $\mathcal{R}_p(\alpha,\beta,\phi,V)$ (see (\ref{hyp:recursive_control_param_Talay})) hold, it follows from Proposition \ref{prop:recursive_control_Talay} that $\mathcal{RC}_{Q,V}(\psi_p,\phi,p\tilde{\alpha},p\beta)$ (see (\ref{hyp:incr_sg_Lyapunov})) is satisfied for every $\tilde{\alpha} \in (0,\alpha)$ since $\liminf_{y \to + \infty} \phi(y) > \beta / \tilde{\alpha}$. Moreover let us notice that for every $p \leqslant 1$ then $\mathcal{R}_p(\alpha,\beta, \phi, V)$ (see (\ref{hyp:recursive_control_param_Talay})) is similar to $\mathcal{R}_1(\alpha,\beta, \phi, V)$ and then $\mathcal{RC}_{Q,V}(\psi_1,\phi,\tilde{\alpha},\beta)$ (see (\ref{hyp:incr_sg_Lyapunov})) is satisfied for every $\tilde{\alpha} \in (0,\alpha)$

\paragraph{Step 2. Step weight assumption} 
Now, we show that $\mathcal{S}\mathcal{W}_{\mathcal{I}, \gamma,\eta}(V^{p \vee 1+a-1} ,\rho,\epsilon_{\mathcal{I}}) $ (see (\ref{hyp:step_weight_I_gen_chow})) and $\mathcal{S}\mathcal{W}_{\mathcal{II},\gamma,\eta}(V^{p \vee 1 +a-1}) $ (see (\ref{hyp:step_weight_I_gen_tens})) hold. \\

First we noticel that from Step1. the assumption $\mathcal{RC}_{Q,V}(\psi_{p \vee 1},\phi,(p \vee 1)\tilde{\alpha},(p \vee 1)\beta)$ (see (\ref{hyp:incr_sg_Lyapunov})) is satisfied for every $\tilde{\alpha} \in (0,\alpha)$. Then, using $\mathcal{S}\mathcal{W}_{\mathcal{I}, \gamma,\eta}(\rho, \epsilon_{\mathcal{I}})$ (see (\ref{hyp:step_weight_I})) with Lemma \ref{lemme:mom_V} gives $\mathcal{S}\mathcal{W}_{\mathcal{I}, \gamma,\eta}( V^{p \vee 1 +a-1},\rho,\epsilon_{\mathcal{I}}) $ (see (\ref{hyp:step_weight_I_gen_chow})). Similarly, $\mathcal{S}\mathcal{W}_{\mathcal{II},\gamma,\eta}(V^{p \vee 1+a-1}) $ (see (\ref{hyp:step_weight_I_gen_tens}) follows from $\mathcal{S}\mathcal{W}_{\mathcal{II},\gamma,\eta} $ (see (\ref{hyp:step_weight_II})) and  Lemma \ref{lemme:mom_V}. 

\paragraph{Step 3. Growth control assumption}
Now, we prove $\mathcal{GC}_{Q}(F,V^{p \vee 1+a-1},\rho,\epsilon_{\mathcal{I}}) $ (see (\ref{hyp:incr_X_Lyapunov})) for $F= \DomA_0$ and $F=\{V^{p/s}\}$ .\\

This is a consequence of Lemma \ref{lemme:incr_lyapunov_X_Talay}. We recall that $\rho \in [1,2]$. Consequently $M_{\rho \vee( 2p\rho /s) }(U)$ (see (\ref{hyp:moment_ordre_p_va_schema_Talay})) holds.  Now, we notice that from $\mathfrak{B}(\phi)$ (see (\ref{hyp:controle_coefficients_Talay})), we have $\Tr[ \sigma \sigma^{T} ]^{ \rho/2}+ \vert D\sigma \vert^{\rho} \Tr[\sigma \sigma^{T}]^{\rho/2} +\Tr [\tilde{\sigma} \tilde{\sigma}^{T} ]^{\rho/2} \leqslant CV^{ \rho a/2} $ with $a \rho/2 \leqslant p+a-1$ since $\mathcal{S} \mathcal{W}_{pol}(p,a,s,\rho)$ (see (\ref{hyp:control_step_weight_pol_Talay})) holds. Then Lemma \ref{lemme:incr_lyapunov_X_Talay} implies that for $F= \DomA_0$ and $F=\{V^{p/s}\}$, then $\mathcal{GC}_{Q}(F,V^{p \vee 1 + a -1},\rho,\epsilon_{\mathcal{I}}) $ (see (\ref{hyp:incr_X_Lyapunov})) holds

\paragraph{Step 4. Conclusion}
\begin{enumerate}[label=\textbf{\roman*.}]
\item
The first part of Theorem \ref{th:cv_was_Talay} (see (\ref{eq:tightness_Talay})) is a consequence of Theorem \ref{th:tightness}. Let us observe that assumptions from Theorem \ref{th:tightness} indeed hold. \\

On the one hand, we observe that from Step 2. and Step 3. the assumptions $\mathcal{GC}_{Q}(V^{p/s},V^{p \vee 1 +a-1},\rho,\epsilon_{\mathcal{I}}) $ (see (\ref{hyp:incr_X_Lyapunov})), $\mathcal{S}\mathcal{W}_{\mathcal{I}, \gamma,\eta}( V^{p \vee 1 +a-1},\rho,\epsilon_{\mathcal{I}}) $ (see (\ref{hyp:step_weight_I_gen_chow})) and $\mathcal{S}\mathcal{W}_{\mathcal{II},\gamma,\eta}(V^{p \vee 1 +a-1}) $ (see (\ref{hyp:step_weight_I_gen_tens})) hold which are the hypotheses from Theorem \ref{th:tightness} point \ref{th:tightness_point_A} with $g=V^{p \vee 1 +a-1}$.\\

On the other hand, form Step 1. the assumption$\mathcal{RC}_{Q,V}(\psi_p,\phi,p\tilde{\alpha},p\beta)$ (see (\ref{hyp:incr_sg_Lyapunov})) is satisfied for every $\tilde{\alpha} \in (0,\alpha)$. Moreover, since $\mbox{L}_{V}$ (see (\ref{hyp:Lyapunov})) holds and  that $p/s+a-1>0$, then the hypotheses  from Theorem \ref{th:tightness} point \ref{th:tightness_point_B} are satisfied. \\

We thus conclude from Theorem \ref{th:tightness} that $(\nu_n^{\eta})_{n \in \mathbb{N}^{\ast}}$ is $\mathbb{P}-a.s.$ tight and (\ref{eq:tightness_Talay}) holds which concludes the proof of the first part of Theorem \ref{th:cv_was_Talay} point \ref{th:cv_was_Talay_point_A}. \\

\item Let us now prove the second part of Theorem \ref{th:cv_was_Talay} (see (\ref{eq:cv_was_Talay})) which is a consequence of Theorem \ref{th:identification_limit}.\\

On the one hand, we observe that from Step 2. and Step 3. the assumptions $\mathcal{GC}_{Q}(\DomA_0,V^{p \vee 1 +a-1},\rho,\epsilon_{\mathcal{I}}) $ (see (\ref{hyp:incr_X_Lyapunov})) and $\mathcal{S}\mathcal{W}_{\mathcal{I}, \gamma,\eta}( V^{p \vee 1 +a-1},\rho,\epsilon_{\mathcal{I}}) $ (see (\ref{hyp:step_weight_I_gen_chow})) hold which are the hypotheses from Theorem \ref{th:identification_limit} point \ref{th:identification_limit_A} with $g=V^{p \vee 1 +a-1}$.\\

On the other hand, since $b$, $\sigma$, $\vert D\sigma \vert \Tr[\sigma \sigma^{T}]^{1/2}$, $\tilde{\sigma}$ and $Ab$ have sublinear growth and that $g_{\sigma}\leqslant C  V^{p/s+a-1}$, with  $g_{\sigma}=\Tr[ \sigma \sigma^{T} ]+ \vert D\sigma \vert \Tr[\sigma \sigma^{T}]^{1/2}+\Tr[ \tilde{\sigma} \tilde{\sigma}^{T} ]^{1/2} $, so that $\mathbb{P} \mbox{-a.s.} \;\sup_{n \in \mathbb{N}^{\ast}} \nu_n^{\eta}( g_{\sigma}) < + \infty $,  it follows from Proposition \ref{prop:Talay_infinitesimal_approx} that $\mathcal{E}(\widetilde{A},A,\DomA_0) $ (see (\ref{hyp:erreur_tems_cours_fonction_test_reg})) is satisfied. Then, the hypotheses from Theorem \ref{th:identification_limit} point \ref{th:identification_limit_B} hold and (\ref{eq:cv_was_Talay}) follows from (\ref{eq:test_function_gen_cv}).

\end{enumerate}

\paragraph{Proof of Theorem \ref{th:cv_was_Talay} point \ref{th:cv_was_Talay_point_B}\\}

First we notice that using Theorem \ref{th:cv_was_Talay} point \ref{th:cv_was_Talay_point_A}, then for every $f \in F_q$, $\vert \sigma^{T} Df \vert^2 \in \mathcal{C}_{\tilde{V}_{\psi_p,\phi,s}}(\mathbb{R}^d)$, $\mathfrak{M}_q f \in \mathcal{C}_{\tilde{V}_{\psi_p,\phi,s}}(\mathbb{R}^d)$ and 
\begin{align*}
\mathbb{P}-a.s. \quad \lim_{n \to \infty} \nu^{\gamma}_n(\vert \sigma^{T} Df \vert^2 ) = \nu(\vert \sigma^{T} Df \vert^2 ) \qquad \mbox{and} \qquad \lim_{n \to \infty} \nu^{\tilde{\eta}_q}_n(\mathfrak{M}_q f) = \nu(\mathfrak{M}_q f ) .
\end{align*}

Now, we notice that using Proposition \ref{prop:Talay_infinitesimal_approx}, point \ref{prop:Talay_infinitesimal_approx_point_B} and point \ref{prop:Talay_infinitesimal_approx_point_C}, gives $\mathcal{E}_q(F_q,\tilde{A},A, \mathfrak{M}_q,\tilde{\eta}_q)$ (see (\ref{hyp:rate_erreur_tems_cours_fonction_test_reg})).\\

 Moreover, Lemma \ref{lemme:incr_lyapunov_X_Talay_TCL} gives $\mathcal{GC}_{Q,q}(F_q,g,\rho,\epsilon_{\mathscr{X}},\epsilon_{\mathcal{G}\mathcal{C}},\mathfrak{V})$ (see (\ref{hyp:incr_X_Lyapunov_vitesse})) with $g=\Tr[ \sigma \sigma^{T} ]^{ \rho}+ \vert b \vert^{\rho}+ \vert D\sigma \vert^{\rho} \Tr[\sigma \sigma^{T}]^{\rho/2}  +\Tr [\tilde{\sigma} \tilde{\sigma}^{T}  ]^{\rho/2}+\vert Ab \vert^{\rho}$, $\epsilon_{\mathscr{X}}(\gamma)= \gamma$ and $\epsilon_{\mathcal{G}\mathcal{C}}(\gamma)=\gamma^{\rho}$ for every $\gamma \in \mathbb{R}_+$, every $\rho \in [1,2]$, and with $\mathfrak{V}f=\vert \sigma^{T} Df \vert^2$. Since $\mathfrak{B}(\phi)$ (see (\ref{hyp:controle_coefficients_Talay})) holds, then $g \leqslant CV^{ \rho a/2} $ and it follows that $\mathcal{GC}_{Q,q}(F_q,V^{p \vee 1 +a-1},\tilde{\rho}_q,\epsilon_{\mathscr{X}},\epsilon_{\mathcal{G}\mathcal{C}},\mathfrak{V})$ (see (\ref{hyp:incr_X_Lyapunov_vitesse})) is satisfied. \\
 
 Observing that $\mathcal{S}\mathcal{W}_{\mathcal{G}\mathcal{C}, \gamma}( \tilde{\rho}_q , \gamma,\gamma)$  (see (\ref{hyp:step_weight_I_gen_chow_rate_sans_g})) holds, the proof of Theorem \ref{th:cv_was_Talay} point \ref{th:cv_was_Talay_point_B} is thus a direct consequence of Theorem \ref{th:conv_gnl_ordre_q} taking $q=1$ and $q=2$.

\bibliography{Biblio}
\bibliographystyle{plain}

\end{document}